\setlist[itemize]{itemsep=0.0cm}
\newtheorem{prop}{Proposition}[section]
\newtheorem{theorem}{Theorem}[section]
\newtheorem{assumption}{Assumption}[section]
\declaretheorem[name=Theorem,numberwithin=section]{thm}
\declaretheorem[name=Lemma,numberwithin=section]{lem}
\theoremstyle{remark}
\newtheorem{rem}{Remark}[section]
\theoremstyle{definition}
\newtheorem{example}{Example}[section]
\newtheorem{definition}{Definition}[section]
\numberwithin{equation}{section}
 \def\argmin{\mathop{\arg\min}}
 \def\A{\mathbb{A}}
 \def\X{\mathbb{X}}
 \def\E{\mathbb{E}}
 \def\R{\mathbb{R}}
 \def\An{\mathcal{A}}
 \def\C{\mathcal{C}}
 \def\P{\mathcal{P}}
 \def\B{\mathcal{B}}
 \def\M{\mathcal{M}}
 \def\U{\mathcal{U}}
 \def\q{\gamma}
 \def\I{E}
 \def\uh{\underline{h}}
 \def\bh{\bar{h}}
  \def\epi{\mathop{\text{epi}}}
 \def\eto{\overset{e}{\to}}
 \def\cl{\mathop{\text{cl}}}
 \def\elim{\mathop{\text{e-}\lim}}
 \def\0{\mathbf{0}}
 \def\1{\mathbf{1}}
 \def\ind{\mathbb{1}}
\begin{document} 
\markboth{On Average-Cost Borel-Space MDPs}{}

\title{On Markov Decision Processes with Borel Spaces\\ and an Average Cost Criterion\thanks{This research was supported by a grant from Alberta Innovates---Technology Futures.}}

\author{Huizhen Yu\thanks{RLAI Lab, Department of Computing Science, University of Alberta, Canada (\texttt{janey.hzyu@gmail.com})}}
\date{}

\maketitle

\begin{abstract}
We consider average-cost Markov decision processes (MDPs) with Borel state and action spaces and universally measurable policies. 
For the nonnegative cost model and an unbounded cost model, we introduce a set of conditions under which we prove the average cost optimality inequality (ACOI) via the vanishing discount factor approach. Unlike most existing results on the ACOI, which require compactness/continuity conditions on the MDP, our result does not and can be applied to problems with discontinuous dynamics and one-stage costs. The key idea here is to replace the compactness/continuity conditions used in the prior work by what we call majorization type conditions. In particular, among others, we require that for each state, on selected subsets of actions at that state, the state transition stochastic kernel is majorized by finite measures, and we use this majorization property together with Egoroff's theorem to prove the ACOI. 

We also consider the minimum pair approach for average-cost MDPs and apply the majorization idea. For the case of a discrete action space and strictly unbounded costs, we prove the existence of a minimum pair that consists of a stationary policy and an invariant probability measure induced by the policy. This result is derived by combining Lusin's theorem with another majorization condition we introduce, and it can be applied to a class of countable action space MDPs in which, with respect to the state variable, the dynamics and one-stage costs are discontinuous.
\end{abstract}

\bigskip
\bigskip
\bigskip
\noindent{\bf Keywords:}\\
Markov decision processes; Borel spaces; universally measurable policies;\\ average cost; optimality inequality; minimum pair; majorization conditions

\clearpage
\tableofcontents

\clearpage
\section{Introduction}

We consider discrete-time Markov decision processes (MDPs) with Borel state and action spaces, for the average cost criterion where the objective is to minimize the (limsup) expected long-run average cost per unit time.
Specifically, we are interested in the universal measurability framework, which involves lower semi-analytic one-stage cost functions and universally measurable policies. It is a mathematical formulation of MDPs developed to resolve measurability difficulties in dynamic programming on Borel spaces (Strauch \cite{Str-negative}; Blackwell~\cite{Blk-borel}; Blackwell, Freedman, and Orkin~\cite{BFO74}; Shreve~\cite{Shr79}; Shreve and Bertsekas \cite{bs,ShrB78,ShrB79}). An in-depth study of this theoretical framework is given in the monograph~\cite[Part~II]{bs}, and optimality properties of finite- and infinite-horizon problems with discounted and undiscounted total cost criteria have been analyzed \cite{bs,MS92,ShrB78,ShrB79}. The average cost problem has not been thoroughly studied in this framework, however, and the primary purpose of this paper is to investigate the subject further.

To progress toward our goal, we will draw heavily from the rich literature on a subclass of Borel-space MDPs that have certain compactness and continuity properties---these properties help remove major measurability-related issues and also lead to strong optimality results. 
Most notably, there has been extensive research on lower semicontinuous models and sophisticated theories for the average cost criterion have been developed. (The literature on this subject is too vast to list in full; see the early work \cite{Sch92,Sch93,Sen89}, an early survey paper~\cite{ArB93}, the books \cite{HL96,HL99,Puterman94,Sen99}, and the recent work \cite{FKZ12,JaN06,VAm15} and the references therein.)

In order to derive analogous average-cost optimality results for general Borel-space MDPs, our main idea is to replace the compactness/continuity conditions used in the prior work for lower semicontinuous models by what we call majorization type conditions. These conditions will have different forms when we employ different methods of analysis. But roughly speaking, we want to have finite measures that majorize the state transition stochastic kernel of the MDP or some sub-stochastic kernel created from that kernel, at certain action sets for each state or at the admissible state-action pairs, depending on the context. Our idea is to use those majorizing finite measures in combination with Egoroff's or Lusin's theorem, which would then allow us to extract arbitrarily large sets (large as measured by a given finite measure) on which certain functions involved in our analyses have desired uniform convergence or continuity properties.   
We use this technique, along with other analysis techniques developed in the prior work, to obtain two main results in this paper that can be applied to certain classes of MDPs with discontinuous dynamics and one-stage costs.

Our first result based on this majorization idea is a proof of the average cost optimality inequality (ACOI) for two types of MDPs, the nonnegative cost model and an unbounded cost model with a Lyapunov-type condition, without using compactness/continuity conditions. The study of ACOI was initiated by Sennott \cite{Sen89}, who proved it for countable-space MDPs; prior to \cite{Sen89}, the ACOE (average cost optimality equation) was the research focus. Cavazos-Cadena's counterexample \cite{CCa91} showed that the ACOI is more general: in a countable-space MDP in the example, the ACOI has a solution and yet the ACOE does not. For Borel-space MDPs under various compactness/continuity conditions, the ACOI was first established by Sch{\"a}l~\cite{Sch93}, whose results have been further extended since then (see e.g., Hern\'{a}ndez-Lerma and Lasserre~\cite{HL96,HL99} and more recently, Vega-Amaya~\cite{VAm03}; Ja\'{s}kiewicz and Nowak \cite{JaN06}; Feinberg, Kasyanov, and Zadoianchuk~\cite{FKZ12}). 

The two MDP models we consider have been studied in some of the references just mentioned. 
As in those studies, to prove the ACOI, we also use the vanishing discount factor approach, which treats the average cost problem as the limiting case of the discounted problems, and we adopt some of the conditions formalized in \cite{FKZ12,HL99,Sch93} regarding the value functions of the discounted problems. 
In place of the compactness/continuity conditions used in the prior work, we introduce a set of new conditions of the majorization type: among others, we require that for each state, on selected subsets of actions at that state, the state transition stochastic kernel is majorized by finite measures (see Assumptions~\ref{cond-uc-2} and~\ref{cond-pc-2}). We use this majorization property together with Egoroff's theorem (which shows pointwise convergence of functions is ``almost'' uniform convergence as measured by a given finite measure) to prove the ACOI (see Theorems~\ref{thm-uc-acoi} and~\ref{thm-pc-acoi}). 

For comparison, let us mention a few early results that are either about or applicable to average-cost MDPs with universally measurable policies. In particular, Gubenko and Shtatland also introduced a majorization condition to prove the ACOE for Borel-space MDPs without compactness/continuity conditions~\cite[Theorem 2$'$]{GuS75} (measurability issues are assumed away in this theorem). However, they pursued a contraction-based fixed point approach and their majorization condition, formed to make the contraction argument work, not only differs in essential ways from ours but is also too stringent to be practical (see Remark~\ref{rem-GuS-maj-cond} for details). Gubenko and Shtatland \cite{GuS75} studied the ACOE also under an alternative, minorization condition using the same fixed point approach, and that type of sufficient condition for the ACOE has been generalized by Kurano~\cite{Kur86} to one of a multistep-contraction type. Dynkin and Yushkevich \cite[Chap.~7.9]{DyY79} and Piunovski~\cite{Piu89} studied the characteristic properties of canonical systems---a general form of the ACOE together with stationary policies that solve or almost solve the ACOE. These early researches on general Borel-space MDPs differ significantly from ours in both the approaches taken and the results obtained.

Our second result based on the majorization idea is about the existence of a minimum pair in average-cost MDPs.
A minimum pair refers to a policy together with an initial state distribution that attains the minimal average cost over all policies and initial state distributions. Of interest is the existence of such a pair with special structures, in particular, a stationary policy with an associated invariant probability measure, for the stationary policy is then not only average-cost optimal for that initial distribution but it is also pathwise optimal under additional recurrence conditions~\cite[Chap.\ 5.7]{HL96}. The minimum pair approach was proposed by Kurano \cite{Kur89}, motivated by the methods of occupancy measures from Borkar~\cite{Bor83,Bor84}. Unlike the vanishing discount factor approach, it is a direct method.
Kurano~\cite{Kur89} considered bounded costs and compact spaces, and Hern\'{a}ndez-Lerma (\cite{HLe93}; see also the book \cite[Chap.\ 5.7]{HL96}) analyzed the case of strictly unbounded costs, both working with lower semicontinuous MDP models.

Our result is for a discrete action space and strictly unbounded costs. We prove the existence of a minimum pair that consists of a stationary policy and an invariant probability measure of the Markov chain it induces, under another majorization condition we introduce (see Assumption~\ref{cond-pc-3} and Theorem~\ref{thm-su-mp}).
The result is derived by combining the majorization property with Lusin's theorem (which is about the continuity of Borel measurable functions when restricted to some arbitrarily ``large'' closed sets, with largeness measured by a given finite measure). It applies to a class of discrete action space MDPs where the dynamics and one-stage costs are discontinuous with respect to (w.r.t.) the state variable. 
It can be compared with the minimum pair results for lower semicontinuous models in \cite{HLe93,HL96,Kur89}, although its scope is limited because with our current proof arguments, we can only handle discrete action spaces. Future work is to extend this result to Borel action spaces and universally measurable policies.

We remark that Lusin's theorem has been used earlier in a similar way by the author to tackle measurability-related issues in policy iteration for a lower semicontinuous, Borel-space MDP model~\cite[Sec.~6]{YuB-mvipi}. The minimum pair problem we address in this paper and the other arguments involved in our analysis are entirely different from those in \cite{YuB-mvipi}, however.

Besides the results mentioned above, in this paper, we also derive a basic average-cost optimality theorem for the nonnegative cost and unbounded cost MDP models mentioned earlier, without extra ACOI- or majorization-related conditions. It shows that the optimal average cost function is lower semi-analytic and there always exists a universally measurable, $\epsilon$-optimal semi-Markov policy (see Theorem~\ref{thm-ac-basic}). Based on known counterexamples from Dynkin and Yushkevich \cite[Chap.\ 7]{DyY79} and Feinberg~\cite{Fei80}, without additional assumptions on the MDP, this is the strongest conclusion possible (see~Remark~\ref{rmk-basicthm-compare} for details).

The rest of the paper is organized as follows. In Section~\ref{sec-2}, we introduce the universal measurability framework for Borel-space MDPs, and to prepare for subsequent analyses, we derive several basic optimality results for the two models we consider, under the average and discounted cost criteria. In Section~\ref{sec-3}, we consider the vanishing discount factor approach, propose new majorization type conditions, and prove the ACOI for the aforementioned models. In Section~\ref{sec-4}, we consider the minimum pair approach in the case of strictly unbounded costs, and we present our results for discrete action spaces under a new majorization condition we introduce. 
Some background material and proof details are given in Appendix~\ref{appsec-opt}.

\section{Background and Preliminary Analysis} \label{sec-2}

To study general Borel-space MDPs, we need to go beyond Borel measurable functions and policies because there are measurability difficulties otherwise ~\cite{Blk-borel,Str-negative}. 
The universal measurability framework for MDPs is quite involved, however, so before describing it, we need to first introduce several basic definitions and terminologies. We present these introductory materials in Section~\ref{sec-2.1}. They are largely based on the monograph \cite[Part II]{bs} and are similar to the background overview the author gave in \cite{YuB-mvipi}.  

We then present a preliminary analysis of average-cost MDPs in Section~\ref{sec-2.2}, where we define two model classes and derive some basic optimality results for them (Theorems~\ref{thm-ac-basic} and \ref{thm-dcoe}). In the subsequent section, we will impose further conditions on the two models in order to derive more special average-cost optimality results.

\subsection{Borel-space MDPs in the Universal Measurability Framework} \label{sec-2.1}

\subsubsection{Definitions for some Sets and Functions} \label{sec-2.1.1}

A \emph{Borel space} is a topological space that is homeomorphic to a Borel subset of some Polish space (i.e., a separable and completely metrizable topological space) \cite[Def.\ 7.7]{bs}. For a Borel space $X$, let $\B(X)$ denote the Borel $\sigma$-algebra and $\P(X)$ the set of probability measures on $\B(X)$. We shall refer to these probability measures as Borel probability measures. We endow the space $\P(X)$ with the topology of weak convergence; then $\P(X)$ is also a Borel space~\cite[Chap.\ 7.4]{bs}. 
Each Borel probability measure $p$ has a unique extension on a larger $\sigma$-algebra $\B_p(X)$, which is the $\sigma$-algebra generated by $\B(X)$ and all the subsets of $X$ with $p$-outer measure $0$. This extension is called the \emph{completion of $p$} (cf.\ \cite[Chap.\ 3.3]{Dud02}). 
The \emph{universal $\sigma$-algebra} on $X$ is defined as $\U(X) : = \cap_{p \in \P(X)} \B_p(X)$. 

If a function is $\U(X)$-measurable, we say it is \emph{universally measurable}. Since $\B(X) \subset \U(X)$, a Borel measurable function is universally measurable. Conversely, a universally measurable function $f$ is measurable w.r.t.\ the completion of any Borel probability measure $p$ since $\U(X) \subset \B_p(X)$, and one implication of this is that the integral $\int f dp$ for a nonnegative $f$ can be defined w.r.t.\ the completion of $p$. This is the definition for integration that will be used for Borel-space MDPs.

Let $X$ and $Y$ be Borel spaces. A \emph{Borel or universally measurable stochastic kernel} on $Y$ given $X$ is a function $q: X \to \P(Y)$, denoted $q(dy \,|\, x)$, such that 
for each $B \in \B(Y)$, the function $q(B \mid \cdot): X \to [0, 1]$ is Borel or universally measurable, respectively. The definition is equivalent to that 
$q$ is a measurable function from the space $(X, \B(X))$ or $(X, \U(X))$, respectively, to the space $(\P(Y), \B(\P(Y)))$; see \cite[Def.\ 7.12, Prop.\ 7.26 and Lemma 7.28]{bs}. If $q$ is a continuous function, we say that the stochastic kernel $q(dy \mid x)$ is \emph{continuous} (also known as \emph{weak Feller} in the literature).

We now introduce analytic sets and lower semi-analytic functions.
Analytic sets in a Polish space have several equivalent definitions, one of which is that they are the images of Borel subsets of some Polish space under continuous or Borel measurable functions (see e.g., \cite[Prop.\ 7.41]{bs}, \cite[Sec.\ 13.2]{Dud02}). More precisely, in a Polish space $Y$,
the empty set is analytic by definition, and
a nonempty set $D$ is \emph{analytic} if $D=f(B)$ for some Borel set $B$ in a Polish space and Borel measurable function $f:B \to Y$ \cite[Thm.\ 13.2.1(c$'$)]{Dud02}.
In a Polish space every Borel set is analytic 
and every analytic set is universally measurable (\cite[Cor.\ 7.42.1]{bs}, \cite[Thm.\ 13.2.6]{Dud02}). 
The $\sigma$-algebra generated by the analytic sets is called the \emph{analytic $\sigma$-algebra} and lies in between the Borel and universal $\sigma$-algebras. Thus functions that are analytically measurable (i.e., measurable w.r.t.\ the analytic $\sigma$-algebra) are also universally measurable.

Lower semi-analytic functions are extended real-valued functions whose lower level sets are analytic. 
Specifically, a function $f: D \to [-\infty, \infty]$ is called \emph{lower semi-analytic} if $D$ is an analytic set and 
for every $a \in \R$, the level set $\{ x \in D \!\mid f(x) \leq a\}$ of $f$ is analytic \cite[Def.\ 7.21]{bs}. An equivalent definition is that the epigraph of $f$, $\{(x, a) \!\mid x \in D, f(x) \leq a, a \in \R\}$, is analytic (cf.\ \cite[p.\ 186]{bs}). 
For comparison, $f$ is \emph{lower semicontinuous} if its epigraph is closed.
Since Borel sets are analytic, every extended real-valued, Borel measurable function on a Borel space is lower semi-analytic;
since analytic sets are universally measurable, every lower semi-analytic function is universally measurable.
 
\subsubsection{Some Properties of Analytic Sets and Lower Semi-analytic Functions} \label{sec-2.1.1b}

Analytic sets and lower semi-analytic functions play instrumental roles in the universal measurability framework for Borel-space MDPs. These sets and functions were chosen to be the foundation for a theoretical MDP model, because they possess many properties that are relevant to and important for stochastic dynamic programming. A full account of these properties is beyond our scope, however. 
For that, we refer the reader to the papers \cite{BFO74,MS92,ShrB78} and the monograph \cite[Chap.\ 7]{bs} (for general properties of analytic sets, see also the books \cite[Appendix~2]{DyY79} and \cite{Par67,Sriv-borel}). Below we will only mention some properties that will be used frequently in this paper. They concern measurable selection theorems and the preservation of analyticity or lower semi-analyticity under various operations.

The class of analytic sets in a Polish space is closed under countable unions and countable intersections, and moreover, Borel preimages of analytic sets are also analytic (\cite[Cor.\ 7.35.2, Prop.\ 7.40]{bs}, \cite[Chap.\ 4]{Sriv-borel}). 
These properties of analytic sets are reflected in the properties of lower semi-analytic functions, whose lower level sets are analytic. 
Specifically, in the statements below, let $D$ be an analytic set, and let $X$ and $Y$ be Borel spaces. Throughout the paper, for arithmetic operations involving extended real numbers, we define
$$\infty - \infty = - \infty + \infty = \infty, \qquad 0\cdot \pm \infty = \pm \infty \cdot 0  =0.$$
The following operations on lower semi-analytic functions result in lower semi-analytic functions (see \cite[Lemma 7.30]{bs}):
\begin{itemize}
\item[(i)] For a sequence of lower semi-analytic functions $f_n: D \to [-\infty, \infty]$, $n \geq 1$, the functions $\inf_n f_n$, $\sup_n f_n$, $\liminf_{n \to \infty} f_n$, and $\limsup_{n \to \infty} f_n$ are also lower semi-analytic. 
\item[(ii)] If $g:X \to Y$ is Borel measurable and $f: g(X) \to [-\infty, \infty]$ is lower semi-analytic, then the composition $f \circ g$ is lower semi-analytic.
\item[(iii)] If $f, g: D \to [-\infty, \infty]$ are lower semi-analytic functions, then $f+g$ is lower semi-analytic. In addition, if $f, g \geq 0$ or if $g$ is Borel measurable and $g \geq 0$, then $f g$ is lower semi-analytic. 
\end{itemize}

Another operation on lower semi-analytic functions is integration w.r.t.\ a stochastic kernel.
If $f: X \times Y \to [0, \infty]$ is lower semi-analytic and $q(dy \!\mid x)$ is a Borel measurable stochastic kernel on $Y$ given $X$, then the integral 
$$ \phi(x) = \int_Y f(x, y) \, q(dy \!\mid x) $$
is a lower semi-analytic function on $X$~\cite[Prop.\ 7.48]{bs}. 
(If $q(dy \!\mid x)$ is analytically or universally measurable instead, then $\phi$ is universally measurable~\cite[Prop.\ 7.46 and Sec.\ 11.2]{bs} but \emph{not} necessarily lower semi-analytic.)
The preceding properties are closely related to the structure of the optimal cost functions and the selection of measurable policies in the MDP context.
  
The next two properties concern analytic sets in product spaces or lower semi-analytic functions involving two variables.  
The first property is closely related to the validity of value iteration as well as the structure of the optimal cost function in the MDP context.
If $D$ is an analytic set in $X \times Y$, 
then the projection of $D$ on $X$, $\text{proj}_X(D) = \{ x \!\mid (x,y) \in D \ \text{for some} \ y \}$, is analytic \cite[Prop.~7.39]{bs}.
When applied to level sets of functions, an implication of this is that if $D \subset X \times Y$ is analytic and $f: D \to [-\infty, \infty]$ is lower semi-analytic, then after partial minimization of $f$ over the vertical sections $D_x$ of $D$ for each $x$, the resulting function $f^*: \text{proj}_X(D) \to [-\infty, \infty]$ given by
\begin{equation} \label{eq-minf}
   f^*(x) = \inf_{y \in D_x} f(x, y), \quad \text{where} \ D_x = \{ y \mid (x, y) \in D \}, 
\end{equation}   
is also lower semi-analytic \cite[Prop.\ 7.47]{bs}.
  
The Jankov-von Neumann measurable selection theorem asserts that if $D$ is an analytic set in $X \times Y$, 
then there exists an analytically measurable function $\phi: \text{proj}_X(D) \to Y$ such that the graph of $\phi$ lies in $D$, i.e., $(x, \phi(x)) \in D$ for all $x \in \text{proj}_X(D)$~\cite[Prop.\ 7.49]{bs}. For minimization problems of the form~(\ref{eq-minf}), the theorem is applied to the level sets or epigraphs of lower semi-analytic functions. Together with other properties,
it yields, for each $\epsilon > 0$, the existence of an analytically measurable $\epsilon$-minimizer, as well as the existence of a universally measurable $\epsilon$-minimizer $\phi(\cdot)$ that attains the minimum $f^*(x)$ at every $x \in \text{proj}_X(D)$ where this is possible:
\footnote{Here $\epsilon$ is a constant. The result~\cite[Prop.\ 7.50]{bs}, however, extends to a more general case where the required degree of optimality is different for each $x$. Specifically, given a pair of strictly positive, real-valued functions $\epsilon(\cdot)$ and $\ell(\cdot)$, both assumed to be analytically or universally measurable, there exists a universally measurable function $\phi(\cdot)$ that satisfies (\ref{eq-um-minimizer-1})-(\ref{eq-um-minimizer-2}) with $\epsilon(x)$ and $\ell(x)$ in place of $\epsilon$ and $1/\epsilon$, respectively, in (\ref{eq-um-minimizer-2}). Such a function $\phi(\cdot)$ can be constructed as follows: For $n \geq 1$, let $\phi_n$ denote the function that satisfies (\ref{eq-um-minimizer-1})-(\ref{eq-um-minimizer-2}) with $\epsilon = 1/n$. Let $E_n = \big\{ x \in \text{proj}_X(D) \mid \epsilon(x) > 1/n, \, f^*(x) >- \infty, \, \argmin_{y \in D_x} f(x,y) = \emptyset \big\}$ and let $F_n =  \big\{ x \in \text{proj}_X(D)  \mid \ell(x) < n, \, f^*(x) = - \infty, \, \argmin_{y \in D_x} f(x,y) = \emptyset \big\}$. Clearly, $\cup_{n \geq 1} (E_n \cup F_n) = \big\{ x \in \text{proj}_X(D) \mid \argmin_{y \in D_x} f(x,y) = \emptyset\big\}$, which is a universally measurable set \cite[Prop.~7.50(b)]{bs}. The sets $E_n, F_n$ are also universally measurable. Then define $\phi(x) = \phi_n(x)$ on the set $(E_n \cup F_n) \setminus \cup_{k < n} (E_k \cup F_k)$ for $n \geq 1$; and on the set $\big\{ x \in \text{proj}_X(D) \mid  \argmin_{y \in D_x} f(x,y) \not= \emptyset \big\}$, let $\phi(x) = \phi_1(x)$. This function $\phi(\cdot)$ is universally measurable and satisfies
(\ref{eq-um-minimizer-1})-(\ref{eq-um-minimizer-2}) with $\epsilon(x)$ and $\ell(x)$ in place of $\epsilon$ and $1/\epsilon$, respectively, as required.
\label{footnote-ext-selection}}
\begin{equation} \label{eq-um-minimizer-1}
    \phi(x) \in \argmin_{y \in D_x} f(x,y), \qquad \text{if} \  \argmin_{y \in D_x} f(x,y) \not= \emptyset,
\end{equation}
and for $x \in \text{proj}_X(D)$ with $\argmin_{y \in D_x} f(x,y) = \emptyset$,
\begin{equation}   \label{eq-um-minimizer-2}
 \phi(x) \in D_x \qquad \text{and} \qquad  f(x, \phi(x)) \leq  \begin{cases}
        f^*(x) + \epsilon, & \text{if} \ f^*(x) > - \infty; \\
        -1/\epsilon, & \text{if} \ f^*(x) = - \infty.
        \end{cases}
\end{equation}
Further details about these measurable selection theorems can be found in~\cite[Prop.\ 7.50]{bs}. 
For MDPs, this is closely related to the existence of optimal or nearly optimal policies and their structures.

\subsubsection{Definitions for Borel-space MDPs} \label{sec-2.1.2}

In the universal measurability framework, a Borel-space MDP has the following elements and model assumptions (cf.\ \cite[Chap.\ 8.1]{bs}):
\begin{itemize}
\item The state space $\X$ and the action space $\A$ are \emph{Borel spaces}.
\item The control constraint is specified by a set-valued map $A: x \mapsto A(x)$, where for each state $x \in \X$, $A(x) \subset \A$ is a nonempty set of admissible actions at that state, and the graph of $A(\cdot)$,
$$\Gamma = \{(x, a) \mid x \in \X, a \in A(x)\} \subset \X \times \A,$$
is \emph{analytic}.
\item The one-stage cost function $c: \Gamma \to [-\infty, +\infty]$ is \emph{lower semi-analytic}.
\item State transitions are governed by $q(dy \mid x, a)$, a \emph{Borel measurable} stochastic kernel on $\X$ given $\X \times \A$.
\end{itemize}

We consider infinite horizon control problems. A policy consists of a sequence of stochastic kernels on $\A$ that specify for each stage, which admissible actions to apply, given the history up to that stage. 
In particular, a \emph{universally measurable policy} is a sequence $\pi=(\mu_0, \mu_1, \ldots)$, where for each $k \geq 0$,
$\mu_k\big(da_k \!\mid x_0, a_0, \ldots, a_{k-1}, x_k \big)$ is a universally measurable stochastic kernel on $\A$ given $(\X \times \A)^{k} \times \X$ and obeys the control constraint of the MDP:
\footnote{In (\ref{eq-control-constraint}), the probability of the set $A(x_k)$ is measured w.r.t.\ the completion of the Borel probability measure $\mu_k(da_k \mid x_0, a_0, \ldots, a_{k-1}, x_k )$. This is valid because for each $x \in \X$, the vertical section $A(x)$ of the analytic set $\Gamma$ is universally measurable by \cite[Lemma 7.29]{bs}.}
\begin{equation}  \label{eq-control-constraint}
   \mu_k\big(A(x_k) \!\mid x_0, a_0, \ldots, a_{k-1}, x_k \big) = 1, \qquad \forall \, (x_0, a_0, \ldots, a_{k-1}, x_k) \in (\X \times \A)^k \times \X.
\end{equation}   
A policy $\pi$ is \emph{Borel measurable} 
if each component $\mu_k$ is a Borel measurable 
stochastic kernel; $\pi$ is then also universally measurable by definition.
(A Borel measurable policy, however, may not exist \cite{Blk-borel}.)
We define the policy space $\Pi$ of the MDP to be the set of universally measurable policies. We shall simply refer to these policies as policies, dropping the term ``universally measurable,'' if there is no confusion or no need to emphasize their measurability.

We define several subclasses of policies in the standard way: 
A policy $\pi$ is \emph{nonrandomized} if for every $k \geq 0$ and every $(x_0, a_0, \ldots, a_{k-1}, x_k)$, $\mu_k\big(d a_k \!\mid x_0, a_0, \ldots, a_{k-1}, x_k \big)$ is a Dirac measure that assigns probability one to a single action in $A(x_k)$.
A policy $\pi$ is \emph{semi-Markov} if for every $k \geq 0$, the function $(x_0, a_0, \ldots, a_{k-1}, x_k) \mapsto  \mu_k(d a_k \!\mid x_0,  a_0, \ldots, a_{k-1}, x_k)$ depends only on $(x_0, x_k)$;
\emph{Markov} if for every $k \geq 0$, that function depends only on $x_k$; \emph{stationary} if $\pi$ is Markov and $\mu_k = \mu$ for all $k \geq 0$. For the stationary case, we simply write $\mu$ for $\pi = (\mu, \mu, \ldots)$. 
A nonrandomized stationary policy $\mu$ can also be viewed as a function that maps each $x \in \X$ to an action in $A(x)$.  
We denote this mapping also by $\mu$ and we will use both notations $\mu(x)$, $\mu(d a\,|\, x)$ in the paper. 

Because the graph $\Gamma$ of the control constraint $A(\cdot)$ is analytic, by the Jankov-von Neumann selection theorem \cite[Prop.~7.49]{bs}, there exists at least one universally measurable, 
nonrandomized stationary policy. Thus \emph{the policy space $\Pi$ is non-empty}.
Given a policy $\pi \in \Pi$ and an initial state distribution $p_0 \in \P(\X)$, the collection of stochastic kernels 
\begin{align*}
 & \mu_0(d a_0 \mid x_0), \  q(dx_1 \mid x_0, a_0), \  \mu_1(da_1, \mid x_0, a_0, x_1), \ q(dx_2 \mid x_1, a_1), \ \ldots, \qquad \\
 & \ldots, \ \mu_k\big(d a_k \mid x_0, a_0, \ldots, a_{k-1}, x_k \big), \ q(d x_{k+1} \mid x_k, a_k), \ \ldots,
\end{align*}  
determines uniquely a probability measure $r(\pi, p_0)$ on the universal $\sigma$-algebra on $(\X \times \A)^\infty$ \cite[Prop.\ 7.45]{bs}.
\footnote{Because the universal $\sigma$-algebra on $(\X \times \A)^\infty$ is not a product $\sigma$-algebra, the existence of a unique probability measure $r(\pi, p_0)$ here does not follow immediately from the Ionescu Tulcea theorem.}
Furthermore, by \cite[Prop.\ 7.45]{bs}, w.r.t.\ this probability measure, 
the expectation $\E f$ for any nonnegative, universally measurable function $f: (\X \times \A)^{k+1} \to [0, \infty]$ equals the iterated integral 
$$ \int_\X \int_\A \cdots \int_{\X} \int_{\A}  f(x_0, a_0, \ldots, x_{k}, a_{k})  \mu_k(da_{k} \!\mid x_0, a_0, \ldots, x_k) \, q(dx_k \!\mid x_{k-1}, a_{k-1}) \, \cdots \,\mu_0(da_0 \!\mid x_0) \, p_0(dx_0).$$
(Recall that whenever a Borel probability measure appears in the integral of a universally measurable function, the integration is defined w.r.t.\ the completion of the Borel probability measure.)
In general, for a universally measurable function $f: (\X \times \A)^\infty \to [-\infty, +\infty]$, define $\E f : = \E f^+ - \E f^-$ where $f^+ = \max \{ 0, f \}$ and $f^- = - \min \{ 0, f\}$; if $\E f^+ = \E f^- = +\infty$, define $\E f = +\infty$ by following the convention $\infty - \infty =  - \infty + \infty =  \infty.$
In the control problems that we will study, however, we will not encounter such summations.

\subsubsection{The Expected Average Cost and Discounted Cost Criteria}

We consider the average cost criterion and the discounted cost criterion. 
The \emph{$n$-stage value function} of a policy $\pi$ is given by
$$ J_n(\pi, x) : = \E^\pi_x \Big[ \, \textstyle{\sum_{k=0}^{n-1} c(x_k, a_k)} \, \Big], \qquad x \in \X,$$
where $\E^\pi_x$ denotes expectation w.r.t.\ the probability measure induced by $\pi$ and the initial state $x_0 = x$ (cf.\ the explanation given in Section~\ref{sec-2.1.2}). By \cite[Prop.\ 7.46]{bs}, the function $J_n(\pi, \cdot)$ is universally measurable.
We define the \emph{average cost function} of $\pi$ by
$$ J(\pi,x) : = \limsup_{n \to \infty} J_n(\pi, x) /n, \qquad x \in \X,$$
and the \emph{optimal average cost function} by
$$ g^*(x) : = \inf_{\pi \in \Pi} J(\pi,x) = \inf_{\pi \in \Pi} \limsup_{n \to \infty} J_n(\pi, x)/n, \qquad x \in \X.$$
For the discounted cost criterion, with a discount factor $0 < \alpha < 1$, we define the \emph{$\alpha$-discounted value function} of a policy $\pi$ by
$$ v^\pi_\alpha(x) : = \limsup_{n \to \infty}  \E^\pi_x \Big[ \, \textstyle{ \sum_{k=0}^{n-1} \alpha^k c(x_k, a_k) } \, \Big], \qquad x \in \X,$$
and the \emph{optimal $\alpha$-discounted value function} by
$$ v_\alpha(x) : = \inf_{\pi \in \Pi} v^\pi_\alpha(x) , \qquad x \in \X.$$

Both $J(\pi, \cdot)$ and $v^\pi_\alpha(\cdot)$ are universally measurable (the latter by \cite[Prop.\ 7.46]{bs}). 
However, whether the optimal cost functions $g^*$ and $v_\alpha$ are universally measurable cannot be deduced immediately from their definitions. It will be shown in the next subsection, for two classes of MDP models, that $g^*$ and $v_\alpha$ are indeed lower semi-analytic functions. This analysis relies on various properties of lower semi-analytic functions and a deep connection between a certain subset of the policy space and an analytic set, which we will explain more in the next subsection and in Appendix~\ref{appsec-dm}.

We now introduce several classes of functions and the dynamic programming operators for an MDP, which will be needed in the subsequent analysis.
Let $\M(\X)$ denote the set of extended real-valued, universally measurable functions on $\X$, and $\M_b(\X)$ the subset of bounded functions in $\M(\X)$. We shall also consider certain subsets of unbounded functions in $\M(\X)$. For a universally measurable function $w : \X \to (0, +\infty)$, which we shall refer to as a weight function,
let 
$$\M_w(\X) : = \big\{ f \mid  \| f \|_w < \infty, f \in \M(\X) \big\}, \qquad \text{where} \  \ \| f\|_w : = \sup_{x \in \X} \big| f(x) \big| /w(x).$$
The space $\M_w(\X)$ endowed with the weighted norm $\|\cdot\|_w$ and the space $\M_b(\X)$ with the supreme norm $\| \cdot\|_\infty$ are both Banach spaces. 

Let $\An(\X)$ denote the set of extended real-valued, lower semi-analytic functions on $\X$. 
Note that $\An(\X) \cap \M_b(\X)$ and $\An(\X) \cap \M_w(\X)$ are closed subsets of $\M_b(\X)$ and $\M_w(\X)$, respectively.
\footnote{This is because convergence in the $\| \cdot\|_\infty$ or $\|\cdot\|_w$ norm implies pointwise convergence, and the pointwise limit of a sequence of lower semi-analytic functions is lower semi-analytic~\cite[Lemma 7.30(2)]{bs}.}
For $0 < \alpha < 1$, define an operator $T_\alpha$ that maps $v \in \M(\X)$ to a function on $\X$ according to 
$$(T_\alpha v)(x) : = \inf_{a \in A(x)} \left\{ c(x, a) + \alpha \int_\X v(y) \, q(dy \mid x, a) \right\}, \qquad x \in \X.$$
For $\alpha = 1$, define an operator $T$ likewise. We shall refer to them as \emph{dynamic programming operators}.

\begin{lem}[cf.\ {\cite[Chap.\ 7]{bs}}] \label{lem-T-lsa}
The operators $T$ and $T_\alpha$, $0 < \alpha < 1,$ map $\An(\X)$ into $\An(\X)$.
\end{lem}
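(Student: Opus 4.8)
The plan is to exhibit $T_\alpha v$ as the partial minimization, over the vertical sections of the analytic graph $\Gamma$, of a single lower semi-analytic function on $\Gamma$, so that membership in $\An(\X)$ follows at once from the partial minimization property recorded in \eqref{eq-minf}. Fix $v \in \An(\X)$ and set $F(x,a) := c(x,a) + \alpha \int_\X v(y)\, q(dy \mid x,a)$ on $\Gamma$. Since $A(x) \neq \emptyset$ for every $x$, the projection $\text{proj}_\X(\Gamma)$ is all of $\X$, and $(T_\alpha v)(x) = \inf_{a \in A(x)} F(x,a)$ is exactly the partial minimization of $F$ over the section $A(x)$. Thus, once $F$ is shown to be lower semi-analytic on $\Gamma$, the result follows immediately; the operator $T$ is the case $\alpha = 1$ and is handled by the same argument.

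The crux is to prove that $\phi(x,a) := \int_\X v(y)\, q(dy \mid x,a)$ is lower semi-analytic on $\X \times \A$. First, $(x,a,y) \mapsto v(y)$ is lower semi-analytic, being the composition of $v$ with the Borel coordinate projection $(x,a,y) \mapsto y$ (property (ii)). The obstacle, which I expect to be the only genuine difficulty, is that the integration property is stated only for \emph{nonnegative} integrands, whereas $v$ may take negative values and be unbounded below, so $\phi$ must first be given meaning through the convention $\infty - \infty = \infty$ and only then shown measurable. I would resolve this by truncation from below: for $N \geq 1$ put $f_N := \max(v, -N)$, so that $f_N + N \geq 0$ is lower semi-analytic, the nonnegative integration property applies to $f_N + N$, and subtracting the finite constant $N$ (legitimate since $\int_\X f_N^- \, dq \leq N < \infty$) shows that $\phi_N(x,a) := \int_\X f_N(y)\, q(dy \mid x,a)$ is lower semi-analytic on $\X \times \A$.

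It then remains to pass to the limit. Using $f_N^+ = v^+$ and $f_N^- = \min(v^-, N) \uparrow v^-$ together with monotone convergence and the stated convention, one checks that $\phi(x,a) = \inf_{N \geq 1} \phi_N(x,a)$ pointwise: when $\int_\X v^+\, dq < \infty$ the $\phi_N$ decrease to $\int_\X v\, dq$, and when $\int_\X v^+\, dq = \infty$ both sides equal $+\infty$ by the convention. Since the infimum of a sequence of lower semi-analytic functions is lower semi-analytic (property (i)), $\phi$ is lower semi-analytic, hence so is its restriction to $\Gamma$. Finally $c$ is lower semi-analytic by the model assumption, $\alpha\phi$ is lower semi-analytic because $\alpha$ is a nonnegative Borel-measurable (constant) factor, and $F = c + \alpha\phi$ is lower semi-analytic by the additivity in property (iii); the partial minimization property then yields $T_\alpha v \in \An(\X)$. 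All steps other than the truncation-and-limit argument for the signed integral are routine bookkeeping with operations (i)--(iii) and the projection/selection machinery already established.
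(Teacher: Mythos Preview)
Your proposal is correct and follows essentially the same route as the paper's own proof (given in the footnote immediately after the lemma): show that $(x,a)\mapsto \int_\X v(y)\,q(dy\mid x,a)$ is lower semi-analytic, multiply by the nonnegative constant $\alpha$, add the lower semi-analytic $c$, and apply partial minimization over the analytic set $\Gamma$. The only difference is that the paper invokes \cite[Prop.~7.48]{bs} directly for a general extended-real-valued lower semi-analytic integrand, whereas you---working only from the nonnegative version quoted in the paper---supply a correct truncation-and-limit argument to reduce to that case; this extra step is sound but unnecessary once one consults the full statement of \cite[Prop.~7.48]{bs}.
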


This lemma follows from the model assumptions in the universal measurability framework for MDPs and the properties of analytic sets and lower semi-analytic functions given in Section~\ref{sec-2.1.1b}.
\footnote{Details: Since the state transition stochastic kernel $q(dy \,|\, x, a)$ is Borel measurable, by \cite[Prop.~7.48]{bs}, the integral $\int v(y) \, q(dy \,|\, x, a)$ for a function $v \in \An(\X)$ is a lower semi-analytic function in $(x, a)$. Since $\alpha \geq 0$, by \cite[Lemma~7.30(4)]{bs}, the integral multiplied by $\alpha$ remains to be lower semi-analytic in $(x, a)$. Then, as the one-stage cost function $c(\cdot)$ is lower semi-analytic, the sum $c(x, a) + \alpha \int v(y) \, q(dy \,|\, x, a)$ is lower semi-analytic in $(x, a)$ by~\cite[Lemma~7.30(4)]{bs}. This shows that $T_\alpha v$ and $T v$ are the result of partial minimization of a lower semi-analytic function on the analytic set $\Gamma$ (the graph of the control constraint). So by~\cite[Prop.~7.47]{bs}, $T_\alpha v$ and $T v$ are lower semi-analytic.}

\subsection{Two Model Classes and some Basic Optimality Properties} \label{sec-2.2}

We consider two model classes which we designate as (PC) and (UC):
\begin{itemize}
\item (PC) is simply the nonnegative model where $c \geq 0$. For the average-cost or discounted problem, it is equivalent to the case where $c$ is bounded from below. 
\item In (UC), the one-stage cost function $c$ can be unbounded below or above, but it needs to satisfy a growth condition and moreover, there is a Lyapunov-type condition on the dynamics of the MDP. The precise definition is as follows.
\end{itemize}

\begin{definition}[the model (UC)] \label{def-uc}
There exist a universally measurable weight function $w(\cdot) \geq 1$ and constants $b, \hat c \geq 0$ and $\lambda \in [0,1)$ such that for all $x \in \X$,
\begin{enumerate}
\item[(a)] $\sup_{a \in A(x)} |c(x, a)| \leq \hat c \, w(x)$;
\item[(b)] $\sup_{a \in A(x)} \int_\X w(y) \, q(dy \mid x, a) \leq \lambda w(x) + b$.
\end{enumerate}
\end{definition}

For (UC), the conditions (a)-(b) in its definition ensure that the average cost function of any policy $\pi$ satisfies $\|J(\pi, \cdot) \|_w \leq \ell$ for the constant $\ell =  \hat c \,b /(1-\lambda)$, and hence the optimal average cost function also satisfies $\|g^*\|_w \leq \ell$ and in particular, $g^*$ is finite everywhere. For (PC), $g^* \geq 0$ and it is possible that at some state $x$, $g^*(x) = +\infty$. This possibility will be eliminated later under further assumptions on the model.

The nonnegative model (PC) has been analyzed in \cite[Part II]{bs} under the expected total cost criterion. This book also discusses the expected discounted cost criterion and analyzes a model with bounded costs. It does not address the average cost criterion; nonetheless, some part of its analysis can be applied to the average cost case. In particular, the relations between a Borel-space MDP and a corresponding deterministic control model (DM) defined on spaces of probability measures (\cite{ShrB79} and \cite[Chaps.\ 9.2-9.3]{bs}), by which many optimality results for the total or discounted cost criterion are derived in the book, give us a starting point to study the average cost case in the universal measurability framework. 

The optimality properties stated in Theorem~\ref{thm-ac-basic} below follow from those arguments from the book \cite[Part II]{bs}. 
In this theorem as well as in what follows, by an $\epsilon$-optimal or optimal policy, we mean a policy that is $\epsilon$-optimal or optimal \emph{for all initial states}. If a policy is only $\epsilon$-optimal or optimal for a certain initial state or initial state distribution, we will state that explicitly.

\begin{theorem}[some average-cost optimality properties] \label{thm-ac-basic}
{\rm (PC)(UC)} \ 
\begin{enumerate}
\item[\rm (i)] The optimal average cost function $g^*$ is lower semi-analytic. 
\item[\rm (ii)] For each $\epsilon > 0$, there exists a universally measurable, $\epsilon$-optimal, randomized semi-Markov policy. If there exists an optimal policy for each state $x \in \X$, then there exists a universally measurable, optimal, randomized semi-Markov policy.
\end{enumerate}
\end{theorem}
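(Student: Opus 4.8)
The plan is to reduce the average-cost problem to the deterministic control model (DM) on $\P(\X)$ associated with the MDP via the correspondence of \cite{ShrB79} and \cite[Chaps.~9.2--9.3]{bs}. The DM state is a probability measure $p \in \P(\X)$, its control at $p$ is an admissible decision rule $\mu \in \mathcal{D} := \{ \mu \in \P(\A \mid \X) : \mu(A(x)\mid x) = 1 \ \forall x \in \X \}$, and its dynamics and one-stage cost are deterministic: $p \mapsto \bar q(p,\mu) := \int q(\cdot \mid x,a)\, \mu(da\mid x)\, p(dx)$ and $\bar c(p,\mu) := \int_\Gamma c(x,a)\, \mu(da\mid x)\, p(dx)$. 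The point of this reduction is that $J_n(\pi,x) = \sum_{k=0}^{n-1}\int_\Gamma c\, d\nu_k^{\pi,x}$ depends on $\pi$ only through the state--action marginal flow $(\nu_k^{\pi,x})$, and by the standard reduction to Markov policies that match these marginals (available in this framework), for each fixed initial state $x$ it suffices to minimize over Markov policies, i.e., over sequences of decision rules $\sigma = (\mu_0,\mu_1,\dots) \in \mathcal{D}^\infty$; such a sequence induces, from $p_0 = \delta_x$, a purely deterministic marginal flow $p_k = p_k(x,\sigma)$, and $J(\pi,x) = \limsup_{n\to\infty} \tfrac1n \sum_{k=0}^{n-1} \bar c(p_k, \mu_k)$. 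Consequently
\[
 g^*(x) = \inf_{\sigma \in \mathcal{D}^\infty} \, \limsup_{n\to\infty}\ \frac1n \sum_{k=0}^{n-1} \bar c\big(p_k(x,\sigma), \mu_k\big), \qquad p_0(x,\sigma) = \delta_x.
\]

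For part (i) I would verify that the right-hand objective is lower semi-analytic jointly in $(x,\sigma)$ and then pass the infimum through by partial minimization. Concretely, the map $x \mapsto \delta_x$ is Borel, $\mathcal{D}$ is an analytic subset of the Borel space of stochastic kernels on $\A$ given $\X$, and the flow maps $(x,\sigma)\mapsto p_k(x,\sigma)$ are Borel by iterated composition of $\bar q$. Since $c$ is lower semi-analytic and $q$ is Borel, $\bar c$ is lower semi-analytic, so each $\bar c(p_k(x,\sigma),\mu_k)$ is lower semi-analytic in $(x,\sigma)$ by composition with a Borel map (Lemma~\ref{lem-T-lsa} rests on the same mechanism; cf.\ \cite[Lemma~7.30, Prop.~7.48]{bs}); finite sums, multiplication by $1/n \ge 0$, and $\limsup$ all preserve lower semi-analyticity (\cite[Lemma~7.30]{bs}). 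Thus the objective is lower semi-analytic on $\X \times \mathcal{D}^\infty$, and since $\mathcal{D}^\infty$ is analytic, the partial minimization result \cite[Prop.~7.47]{bs} yields that $g^*$ is lower semi-analytic.

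For part (ii) I would apply the universally measurable selection theorem behind (\ref{eq-um-minimizer-1})--(\ref{eq-um-minimizer-2}) to the lower semi-analytic objective above, viewed as a function on the analytic set $\X \times \mathcal{D}^\infty$ minimized over the $\mathcal{D}^\infty$-sections. This produces a universally measurable map $x \mapsto \sigma^x = (\mu_0^x,\mu_1^x,\dots)$ that is $\epsilon$-optimal for every $x$, and optimal at every $x$ where an optimal policy exists (using (\ref{eq-um-minimizer-1}), whose hypothesis holds throughout $\X$ in the second assertion). Reassembling these into $\mu_k(da \mid x_0, x_k) := \mu_k^{x_0}(da\mid x_k)$ produces a universally measurable, randomized semi-Markov policy, and its average cost from any initial state $x$ equals the objective at $(x,\sigma^x)$, giving the claimed $\epsilon$-optimality (resp.\ optimality). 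The dependence on $(x_0,x_k)$ --- hence \emph{semi}-Markov rather than Markov --- arises precisely because the selected decision-rule sequence depends on the initial state $x_0$ through the DM trajectory it controls.

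The hard part will be the reduction itself: justifying that the infimum over the full space $\Pi$ of universally measurable, history-dependent policies equals the infimum over $\mathcal{D}^\infty$ for the $\limsup$-average criterion. This requires, first, the marginal-matching step --- that for each initial state the entire state--action marginal flow of an arbitrary policy can be reproduced by a sequence of decision rules, carried out with the measurability care demanded by the universal framework --- and, second, checking the attendant measurability claims ($\mathcal{D}$ analytic, $\bar q$ Borel, $\bar c$ lower semi-analytic). Non-additivity of the average cost is not itself the obstacle, since the cost sequence and hence its $\limsup$-average are determined by the marginal flow; the delicate point is transporting the DM correspondence of \cite[Chap.~9]{bs}, developed there for the additive total-cost criterion, to the present setting.
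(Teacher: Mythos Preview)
Your high-level strategy is correct and matches the paper's: reduce to the deterministic model (DM), exhibit $g^*$ as a partial infimum of a lower semi-analytic function over an analytic set, and apply the Jankov--von Neumann type selection theorem. The gap is in your choice of parametrization. You take the DM action space to be the set $\mathcal{D}$ of admissible decision rules (universally measurable stochastic kernels on $\A$ given $\X$ satisfying the control constraint) and assert that ``$\mathcal{D}$ is an analytic subset of the Borel space of stochastic kernels on $\A$ given $\X$.'' There is no such Borel space in general: the collection of universally (or even Borel) measurable maps $\X \to \P(\A)$ does not carry a natural standard Borel structure, and without one you cannot make sense of ``$\mathcal{D}^\infty$ is analytic,'' ``the flow maps $(x,\sigma)\mapsto p_k(x,\sigma)$ are Borel,'' or the measurable selection $x\mapsto\sigma^x$. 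This is precisely the obstruction that the DM construction in \cite[Chap.~9.2]{bs} is designed to avoid.

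The paper's fix is to take the DM action space to be $\P(\X\times\A)$ itself---a genuine Borel space---with control constraint $\bar A(p)=\{\gamma\in\P(\X\times\A):\gamma(\cdot\times\A)=p,\ \gamma(\Gamma)=1\}$. An admissible sequence is then $(p_0,\gamma_0,\gamma_1,\ldots)$ with $p_{k+1}(\cdot)=\int q(\cdot\mid x,a)\,\gamma_k(d(x,a))$, and the set $\Delta$ of all such sequences is analytic in $\P(\X)\times\P(\X\times\A)^\infty$ \cite[Lemma~9.1]{bs}. The average-cost objective $G(p_0,\gamma_0,\gamma_1,\ldots)=\limsup_n n^{-1}\sum_{k<n}\int c\,d\gamma_k$ is lower semi-analytic on $\Delta$, so $\bar g^*(p_0)=\inf_{\Delta_{p_0}}G$ is lower semi-analytic and $g^*(x)=\bar g^*(\delta_x)$ follows. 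For part~(ii), the selection theorem yields a universally measurable $\psi:\P(\X)\to\P(\X\times\A)^\infty$; the semi-Markov policy is then built by disintegrating each $\gamma_k(x):=\psi(\delta_x)_k$ as $\mu_k(da_k\mid x,x_k)\,p_k(dx_k\mid x)$ via \cite[Prop.~7.27]{bs}. One further step you do not mention: the resulting $\mu_k$ may violate the control constraint on a $p_k(\cdot\mid x)$-null set, so one must redefine it there (using any fixed admissible selector) and verify this does not change the induced marginals. Your reassembly ``$\mu_k(da\mid x_0,x_k):=\mu_k^{x_0}(da\mid x_k)$'' presupposes both a joint measurability in $(x_0,x_k)$ that your parametrization cannot deliver and that the constraint is already met everywhere.
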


We give the proof details in Appendix~\ref{appsec-opt}. Specifically, we explain the corresponding deterministic control model (DM) in Section~\ref{appsec-dm}, which will also be needed later in two other proofs, and we then prove Theorem~\ref{thm-ac-basic} in Section~\ref{sec-prf-thmacbasic}. Here let us make a few remarks about this theorem and its proof.

\begin{rem}[comparison with some prior results] \label{rmk-basicthm-compare}
It is known that even for MDPs with a countable state space, a finite action space, and bounded one-stage costs, there need not exist an $\epsilon$-optimal nonrandomized semi-Markov policy \cite[Example 3, Chap.~7]{DyY79} nor an $\epsilon$-optimal randomized Markov policy \cite[Sec.~5]{Fei80}. 
In both of these counterexamples, there exists an optimal policy for each state. So without extra conditions on the MDP, Theorem~\ref{thm-ac-basic}(ii) is the strongest possible.

It is pointed out by Feinberg~\cite{Fei80} that Strauch's results \cite[Lemma 4.1 and the proof of Theorem 8.1]{Str-negative} can be applied to the average cost case where the one-stage costs are bounded below, and they yield, for any $p \in \P(\X)$ and $\epsilon > 0$, the existence of a randomized semi-Markov policy that is $\epsilon$-optimal \emph{$p$-almost everywhere}. The $p$-almost-everywhere optimality here is due to the restriction to only Borel measurable policies. With universally measurable policies, there exist policies that are optimal or nearly optimal \emph{everywhere}. This is true for the finite-horizon and infinite-horizon total cost problems \cite{ShrB78,ShrB79} and also true for the average cost problem, as reflected by Theorem~\ref{thm-ac-basic}(ii).

The $\epsilon$-optimality mentioned above involves a constant $\epsilon$. It can be generalized to a strictly positive function $\epsilon(\cdot)$; such notions of optimality have been considered by Feinberg \cite[Sec.~2.2]{Fei82}. Theorem~\ref{thm-ac-basic}(ii) holds as well with $\epsilon(\cdot)$ in place of a constant $\epsilon$. This can be shown by using the above version of Theorem~\ref{thm-ac-basic}(ii) to construct another policy with the desired $\epsilon(\cdot)$-optimality (the construction is similar to that given in Footnote~\ref{footnote-ext-selection}). Alternatively, one can make a slight change in the proof of Theorem~\ref{thm-ac-basic} to handle the function $\epsilon(\cdot)$ directly, by using the implication/extension of a measurable selection theorem mentioned in Footnote~\ref{footnote-ext-selection}.\qed
\end{rem}

\begin{rem}[about the proof of Theorem~\ref{thm-ac-basic} and the role of (DM)] \label{rmk-basic-thm-2}
To prove the second part of the theorem, we construct the desired $\epsilon$-optimal (or optimal) policy directly from a universally measurable $\epsilon$-optimal (or optimal) solution of (DM). This differs from the proofs of similar existence results for the total and discounted cost criteria given in \cite[Chap.\ 9.6]{bs}, where the existence of nearly optimal policies is analyzed by relating the dynamic programming operator of the original problem to that in (DM) and by transferring the optimality equations from (DM) to the original problem. In the average cost case, neither (DM) nor the original problem need to admit optimality equations or possess other dynamic-programming type of properties, so our proof cannot rely on such properties. 

The deterministic control model (DM) facilitates greatly the analysis. This is \emph{not} because the average cost problem in (DM) could somehow be solved by dynamic programming, \emph{nor} is it because a deterministic model can help us evade measurability issues in the original problem. (DM) is useful because the structure of its optimization problem permits more readily the application of the theory for analytic sets and lower semi-analytic functions. The optimality properties of (DM) thus obtained can then be transferred to the original problem via their correspondence relations. 

Another comment is that the proofs based on (DM) share similarities with but differ from Strauch's proof of \cite[Theorem 8.1]{Str-negative} mentioned in the preceding remark. The major difference is that in \cite{Str-negative} one deals directly with the set of probability measures on \emph{the trajectory space} induced by all policies, whereas with (DM), one deals with only the set of sequences of marginal probability measures induced by the policies and this set turns out to have nicer properties with regard to measurable selection, as mentioned above. (See \cite[Chap.~9.2]{bs} and Appendix~\ref{appsec-opt} for more details). \qed
\end{rem}

In the next section, we will use the vanishing discount factor approach to prove the ACOI for (PC) and (UC) under additional conditions.
That analysis starts with the optimality equations for the $\alpha$-discounted cost criteria ($\alpha$-DCOE), which are given below:

\begin{theorem}[the $\alpha$-DCOE and existence of $\epsilon$-optimal policies] \label{thm-dcoe}
{\rm (PC)(UC)} \ For $0 < \alpha < 1$, the optimal value function $v_\alpha$ is lower semi-analytic and satisfies the $\alpha$-DCOE $v_\alpha = T_\alpha v_\alpha$, i.e.,
$$  v_\alpha (x) =  \inf_{a \in A(x)} \left\{ c(x, a) + \alpha \int_\X v_\alpha(y) \, q(dy \mid x, a) \right\}, \qquad x \in \X.$$
For (PC), $v_\alpha$ is the smallest solution of the $\alpha$-DCOE in $\An(\X)$, whereas for (UC), $v_\alpha$ is the unique solution of the $\alpha$-DCOE in the space $\An(\X) \cap \M_w(\X)$. Furthermore, in both cases, for each $\epsilon > 0$, there exists a universally measurable, $\epsilon$-optimal, nonrandomized stationary policy. 
\end{theorem}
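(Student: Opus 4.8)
The plan is to establish the lower semi-analyticity and the $\alpha$-DCOE separately for the two models, and then to construct the $\epsilon$-optimal stationary policy in a unified way. I write $T_{\alpha,\mu}$ for the operator obtained from $T_\alpha$ by fixing a nonrandomized stationary policy $\mu$, and I will repeatedly use Lemma~\ref{lem-T-lsa} (so that the dynamic programming iterates remain lower semi-analytic), the closedness of $\An(\X)\cap\M_w(\X)$ in the Banach space $\M_w(\X)$, and the fact that the pointwise supremum of lower semi-analytic functions is lower semi-analytic.

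For (PC) I would run value iteration from below, $v^{(0)}=0$ and $v^{(n+1)}=T_\alpha v^{(n)}$. Because $c\ge 0$ the iterates increase, each $v^{(n)}=T_\alpha^n 0$ is lower semi-analytic, and hence $\hat v:=\sup_n v^{(n)}$ is lower semi-analytic. Monotonicity of $T_\alpha$ gives $T_\alpha\hat v\ge\hat v$ at once; the reverse inequality is the delicate point, because without a compactness/continuity hypothesis one cannot interchange $\inf_{a\in A(x)}$ with the increasing limit $\sup_n$---this is a genuine minimax gap, not a formality. I would close it through the policy-based dynamic programming argument for the nonnegative model in \cite[Chap.\ 9]{bs}: identifying $\hat v$ with the optimal value $v_\alpha=\inf_\pi v^\pi_\alpha$, one gets $v_\alpha\ge T_\alpha v_\alpha$ by conditioning on the first action and $v_\alpha\le T_\alpha v_\alpha$ by splicing a universally measurable near-optimal continuation after each admissible first action. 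The smallest-solution property then follows from monotonicity: for any lower semi-analytic $u=T_\alpha u$ that is bounded below, $u\ge\alpha\inf u$ forces $\inf u\ge 0$, so $T_\alpha^n 0\le u$ for every $n$ and $v_\alpha\le u$.

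For (UC) I would instead show that $T_\alpha$ is an eventual contraction on $\An(\X)\cap\M_w(\X)$. Iterating Definition~\ref{def-uc}(b) gives $\sup_\pi\E^\pi_x[w(x_n)]\le\lambda^n w(x)+b/(1-\lambda)$, and with Definition~\ref{def-uc}(a) and the elementary bound $\big|(T^n_\alpha f)(x)-(T^n_\alpha g)(x)\big|\le\alpha^n\sup_\pi\E^\pi_x\big[\,|f(x_n)-g(x_n)|\,\big]$ this yields $\|T_\alpha^n f-T_\alpha^n g\|_w\le\alpha^n\big(\lambda^n+b/(1-\lambda)\big)\|f-g\|_w$. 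Since $\alpha<1$, the modulus is $<1$ for all large $n$, so $T_\alpha^n$ is a contraction and $T_\alpha$ has a unique fixed point $\hat v$ in the closed set $\An(\X)\cap\M_w(\X)$. That $\hat v=v_\alpha$ follows by iterating $\hat v=T_\alpha\hat v$ along an arbitrary policy, $\hat v(x)\le\E^\pi_x\big[\sum_{k=0}^{n-1}\alpha^k c(x_k,a_k)\big]+\alpha^n\E^\pi_x[\hat v(x_n)]$, where the tail vanishes because $|\hat v|\le\|\hat v\|_w\,w$ and $\alpha^n\E^\pi_x[w(x_n)]\to 0$; this gives $\hat v\le v_\alpha$, and the matching bound comes from the policy constructed next.

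For the $\epsilon$-optimal nonrandomized stationary policy in both models, I would apply the measurable selection theorem of Section~\ref{sec-2.1.1b} to the lower semi-analytic function $(x,a)\mapsto c(x,a)+\alpha\int_\X\hat v(y)\,q(dy\mid x,a)$ on $\Gamma$, whose partial minimization over $A(x)$ equals $\hat v(x)$ by the $\alpha$-DCOE. Using the tolerance function $\epsilon(x)=(1-\alpha)\epsilon$ in \eqref{eq-um-minimizer-1}--\eqref{eq-um-minimizer-2} produces a universally measurable $\mu$ with $(x,\mu(x))\in\Gamma$ and $T_{\alpha,\mu}\hat v\le\hat v+(1-\alpha)\epsilon$ (at states where $\hat v(x)=+\infty$ in (PC), any Jankov--von Neumann selection serves). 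Iterating gives $T_{\alpha,\mu}^n\hat v\le\hat v+\epsilon$, and letting $n\to\infty$---by nonnegativity of the truncated cost in (PC) and by the policy-evaluation fixed point $v^\mu_\alpha=\lim_n T_{\alpha,\mu}^n\hat v$ in (UC)---yields $v^\mu_\alpha\le\hat v+\epsilon$, whence $v_\alpha\le\hat v$; combined with $\hat v\le v_\alpha$ this gives $\hat v=v_\alpha$ and the $\epsilon$-optimality of $\mu$. I expect the two real obstacles to be (i) the failure of the naive $\inf$--$\lim$ interchange in (PC), which forces the policy-based route to the DCOE rather than pure operator manipulation, and (ii) securing the contraction in (UC) uniformly over $\alpha<1$ in spite of the additive constant $b$ in the Lyapunov bound; the measurability of every selected object is supplied automatically by the analytic/universal-measurability machinery of Section~\ref{sec-2.1.1b}.
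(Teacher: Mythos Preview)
Your treatment of (PC) is essentially the paper's: both defer to the nonnegative-model results in \cite[Chap.~9]{bs} (Props.~9.8, 9.10, 9.19), and your recognition that the $\inf$/$\sup$ interchange genuinely fails without passing through the policy representation is exactly the point.

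For (UC) your route is correct but genuinely different from the paper's, and in fact more elementary. The paper first invokes the deterministic control model (DM) of \cite[Chap.~9.2]{bs} to show directly that $v_\alpha\in\An(\X)$ and that an $\epsilon$-optimal \emph{semi-Markov} policy exists (by writing $v_\alpha(x)=\bar v_\alpha(\delta_x)$ as a partial minimization of a lower semi-analytic function over the analytic set $\Delta$ of admissible sequences); it then constructs a \emph{modified} weight $\tilde w\ge w$ (Lemma~\ref{lem-uc-Ta-contraction}) making $T_\alpha$ a one-step contraction on $\An(\X)\cap\M_{\tilde w}(\X)$, and identifies the Banach fixed point with $v_\alpha$ by splicing that semi-Markov policy with a one-step selector. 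You instead stay in the original $w$-norm, obtain an $n$-step contraction from $\sup_\pi\E^\pi_x[w(x_n)]\le\lambda^n w(x)+b/(1-\lambda)$, and identify the fixed point with $v_\alpha$ by sandwich bounds coming from an arbitrary policy on one side and the selected stationary $\mu$ on the other. This bypasses both (DM) and the $\tilde w$ construction. What the paper's route buys is reusability: the (DM) representation is needed elsewhere anyway (Theorem~\ref{thm-ac-basic}, and crucially Lemma~\ref{lem-valpha-lsc}, which requires $v_\alpha(x)$ to be lower semi-analytic \emph{jointly} in $(\alpha,x)$---something your fixed-point argument does not deliver). What your route buys is that the contraction lives directly in $\M_w(\X)$, so the uniqueness statement in $\An(\X)\cap\M_w(\X)$ falls out immediately, whereas the paper's contraction is in the larger space $\M_{\tilde w}(\X)$.

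One small gap: your ``smallest solution in $\An(\X)$'' argument for (PC) assumes the competing fixed point $u$ is bounded below; extended-real-valued $u\in\An(\X)$ taking the value $-\infty$ are not excluded a priori by the statement, and the inequality $\inf u\ge\alpha\inf u$ gives no information when $\inf u=-\infty$. The paper sidesteps this by citing \cite[Prop.~9.10]{bs} directly.
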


Under additional compactness and continuity conditions, proofs of the $\alpha$-DCOE for (PC) and (UC) can be found in e.g., the papers~\cite{FKZ12,Sch75} and the books \cite[Chap.\ 5]{HL96}, \cite[Chap.\ 8]{HL99}. In the case here, we will use the results of \cite[Part II]{bs} for general Borel-space MDPs to prove the above theorem. 
Specifically, for (PC), this theorem is implied by the optimality results for nonnegative models~\cite[Props.\ 9.8, 9.10, and 9.19]{bs}.

For (UC), it can be shown (see Lemma~\ref{lem-uc-Ta-contraction} in Appendix~\ref{appsec-lem}) that for some universally measurable weight function $\tilde w \geq w$, the operator $T_\alpha$ is a contraction on the closed subset $\An(\X) \cap \M_{\tilde w}(\X)$ of the Banach space $(\M_{\tilde w}(\X), \| \cdot\|_{\tilde w})$. More precisely, for some $\beta \in (\alpha, 1)$,
\begin{equation} \label{eq-contraction-Ta-uc}
T_\alpha v \in \An(\X) \cap \M_{\tilde w}(\X) \quad \text{and} \quad 
\left\| T_\alpha v - T_\alpha v' \right\|_{\tilde w} \leq \beta \left\| v - v' \right\|_{\tilde w}, \quad \forall \, v, v' \in \An(\X) \cap \M_{\tilde w}(\X).
\end{equation}
We use this contraction property of $T_\alpha$ 
together with the correspondence between the original problem and the deterministic control model (DM) \cite[Chap.\ 9]{bs} to prove Theorem~\ref{thm-dcoe} for (UC). The proof is given in Appendix~\ref{appsec-prf-uc}. It is similar to, but does not follow exactly the one given in \cite[Chap.\ 9]{bs} for bounded one-stage costs; see Remark~\ref{rmk-prf-dcoe-uc} at the end of Appendix~\ref{appsec-prf-uc} for further explanations.

As another preparation for the subsequent analysis, let us state a lemma about an implication of the ACOI on the existence and structure of average-cost optimal or nearly optimal policies. For comparison, recall that what Theorem~\ref{thm-ac-basic} just showed is the existence of an $\epsilon$-optimal, randomized semi-Markov policy, in the general case where $g^*$ need not be constant. The proof of this lemma uses mostly standard arguments and is given in Appendix~\ref{appsec-lem}.

\begin{lem}[a consequence of ACOI] \label{lem-optpol}
Consider the models (PC) and (UC) with the average cost criterion. 
Suppose that the optimal average cost function $g^*$ is constant and finite. 
Suppose also that for some finite-valued $h \in \An(\X)$, with $h \geq 0$ for (PC) and $\|h\|_w < \infty$ for (UC), the ACOI holds: $g^* + h \geq T h$, i.e.,
\begin{equation} \label{eq-acoi-gen}
   g^* + h(x) \geq \inf_{a \in A(x)} \left\{ c(x, a) + \int_\X h(y) \, q(dy \mid x, a) \right\}, \qquad x \in \X.
\end{equation}   
Then there exist an optimal nonrandomized Markov policy and, for each $\epsilon > 0$, an $\epsilon$-optimal nonrandomized stationary policy. If, in addition, the infimum in the right-hand side of the ACOI is attained for every $x \in \X$, then there exists an optimal nonrandomized stationary policy. 
\end{lem}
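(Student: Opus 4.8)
The plan is to exploit the ACOI inequality $g^* + h \geq Th$ to control the $n$-stage cost of a well-chosen policy, and then divide by $n$ and take the limit to recover average-cost optimality. First I would construct the policies. Since $h \in \An(\X)$ is finite-valued and lower semi-analytic, the map $(x,a) \mapsto c(x,a) + \int_\X h(y)\, q(dy \mid x, a)$ is lower semi-analytic on the analytic set $\Gamma$ (by the integration and sum properties in Section~\ref{sec-2.1.1b}), so its partial minimization over $A(x)$ is exactly $(Th)(x)$ and is lower semi-analytic by Lemma~\ref{lem-T-lsa}. Applying the measurable selection theorem~\eqref{eq-um-minimizer-1}--\eqref{eq-um-minimizer-2} to this function, for each $\epsilon > 0$ I obtain a universally measurable, nonrandomized stationary policy $\mu_\epsilon$ with
$$ c(x, \mu_\epsilon(x)) + \int_\X h(y)\, q(dy \mid x, \mu_\epsilon(x)) \leq (Th)(x) + \epsilon, \qquad x \in \X. $$
Combined with the ACOI this gives the one-step dissipation bound $c(x,\mu_\epsilon(x)) + \int_\X h\, q(dy\mid x,\mu_\epsilon(x)) \leq g^* + h(x) + \epsilon$ for every $x$. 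When the infimum in the ACOI is attained everywhere, the selection theorem (taking the $\argmin$ branch, which is nonempty for all $x$) produces a stationary $\mu$ achieving the bound with $\epsilon = 0$.

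The key step is to iterate this one-step bound. Writing $h_k(x) := \E^{\mu_\epsilon}_x[h(x_k)]$, I would show by induction, using the Markov property and the tower rule under $r(\mu_\epsilon, \delta_x)$, that
$$ J_n(\mu_\epsilon, x) = \E^{\mu_\epsilon}_x\Big[\textstyle\sum_{k=0}^{n-1} c(x_k, \mu_\epsilon(x_k))\Big] \leq n\,(g^* + \epsilon) + h(x) - h_n(x). $$
Dividing by $n$, the term $h(x)/n \to 0$; the remaining obstacle is the term $h_n(x)/n = \E^{\mu_\epsilon}_x[h(x_n)]/n$. For (PC), $h \geq 0$ gives $-h_n(x) \leq 0$, so the bound $J(\mu_\epsilon,x) \leq g^* + \epsilon$ follows immediately upon taking $\limsup_n$. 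For (UC), I would use $\|h\|_w < \infty$ together with the Lyapunov condition Definition~\ref{def-uc}(b): iterating $\sup_a \int w\, q \leq \lambda w + b$ yields $\E^{\mu_\epsilon}_x[w(x_n)] \leq \lambda^n w(x) + b/(1-\lambda)$, so $h_n(x)/n$ is bounded in absolute value by $\|h\|_w\,\E^{\mu_\epsilon}_x[w(x_n)]/n \to 0$. In both cases $J(\mu_\epsilon, x) \leq g^* + \epsilon$ for all $x$, and since $g^*$ is the optimal average cost, $\mu_\epsilon$ is $\epsilon$-optimal everywhere, which gives the stationary $\epsilon$-optimal policy.

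It remains to produce the genuinely optimal (nonrandomized) policies. For the optimal \emph{Markov} policy, I would take a sequence $\epsilon_m \downarrow 0$ with stationary policies $\mu_{\epsilon_m}$ as above and splice them into a single nonrandomized Markov policy $\pi^* = (\mu_{\epsilon_0}, \mu_{\epsilon_1}, \ldots)$ along the time axis, choosing the $\epsilon_m$ to decrease fast enough that the per-stage excess cost is absorbed in the $\limsup$; the telescoping bound above applied stagewise then gives $J(\pi^*, x) \leq g^*$ for every $x$, hence optimality. For the \emph{stationary} optimal policy under the attainment hypothesis, the $\epsilon=0$ selection $\mu$ constructed in the first paragraph already satisfies $J(\mu, x) \leq g^*$ by the same telescoping argument with $\epsilon=0$. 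I expect the main obstacle to be the (UC) control of $h_n(x)/n$ together with the measurability bookkeeping needed to justify the iterated-integral/tower-property manipulations for universally measurable policies; the splicing construction for the Markov case requires care to verify that $\pi^*$ is a legitimate universally measurable policy, but this follows since each $\mu_{\epsilon_m}$ is universally measurable and the construction is stagewise.
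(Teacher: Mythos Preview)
Your proposal is correct and follows essentially the same route as the paper's proof: measurable selection to obtain near-minimizers of $Th$, telescoping the resulting one-step bound to control $J_n$, handling the residual $\E^{\pi}_x[h(x_n)]/n$ via $h\geq 0$ in (PC) and via the Lyapunov bound $\E^{\pi}_x[w(x_n)]\leq \lambda^n w(x)+b/(1-\lambda)$ in (UC), and splicing stagewise selectors $\mu_{\epsilon_k}$ (the paper takes $\epsilon_k=2^{-k}$, which is your ``fast enough'') to get the optimal Markov policy. The only details the paper makes explicit that you leave implicit are that $(Th)(x)>-\infty$ (needed to avoid the $-\infty$ branch in the selection theorem) and, for (PC), that $\E^{\pi}_x[h(x_n)]<\infty$ so the telescoping sum involves no $\infty-\infty$ cancellations.
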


\section{Main Results: The ACOI} \label{sec-3}

In this section, we place additional conditions on the models (PC) and (UC), under which we study the ACOI via the vanishing discount factor approach. Some of these conditions are standard and from prior work, and some are new conditions that we introduce to replace the compactness and continuity conditions used in the prior work to prove the ACOI for lower semicontinuous models. The arguments for the two models (PC) and (UC) are similar but differ in details, so we will discuss (PC) and (UC) in two separate subsections.

\subsection{The Case of Unbounded Costs (UC)} \label{sec-3.1}

We consider the model (UC) first. Let $\bar x$ be some fixed state and consider the relative value functions of the $\alpha$-discounted problems:
\begin{equation} \label{eq-uc-ha}
h_\alpha(x) : = v_\alpha(x) - v_\alpha(\bar x), \qquad x \in \X.
\end{equation}

\subsubsection{Assumptions} \label{sec-uc-cond}

The first assumption is extracted from the prior work on ACOI:

\begin{assumption} \label{cond-uc-1}
For the model (UC), the set of functions $\{h_\alpha \mid \alpha \in (0,1)\}$ as defined above is bounded in $\M_w(\X)$, i.e., $\sup_{\alpha \in (0,1)} \|h_\alpha\|_w < \infty$.
\end{assumption}

In the prior work, a $w$-geometric ergodicity condition has been used to ensure that the functions $h_\alpha$ have the above boundedness property. Specifically, it is assumed, or ensured through other conditions, that every stationary nonrandomized policy induces a $w$-geometric ergodic Markov chain on the state space (see e.g., \cite[Lemma 10.4.2]{HL99} and \cite[p.\ 498]{JaN06}; see \cite[Chap.~15]{MeT09} for the definition of such Markov chains). This together with the existence of $\epsilon$-optimal nonrandomized stationary policies for each $\epsilon >0$ (cf.\ Theorem~\ref{thm-dcoe}) then guarantees the boundedness of the family $\{h_\alpha \mid \alpha \in (0,1)\}$.
In addition to the $w$-geometric ergodicity condition, compactness and continuity conditions are also involved in deriving the ACOI or ACOE in the prior work \cite{HL99,JaN06}.

To start the analysis of the ACOI, we will need an implication of Assumption~\ref{cond-uc-1} given in the following lemma. Take a sequence $\alpha_n \uparrow 1$ such that for some finite number $\rho^*$,
\begin{equation} \label{eq-rho}
  (1 - \alpha_n) \, v_{\alpha_n}(\bar x) \to \rho^* \quad \text{as} \  n \to \infty.
\end{equation}  
This is possible because the model conditions of (UC) imply that for each $x \in \X$, $(1 - \alpha) \, v_\alpha (x)$ is bounded over $\alpha \in (0,1)$.
Corresponding to the sequence $\{\alpha_n\}$, consider the sequence of functions 
$h_n : = h_{\alpha_n}$. 
Define 
\begin{align}
 \uh  : = \liminf_{n \to \infty} h_n, \qquad  \uh_n : = \inf_{m \geq n} h_m, \ \ \ n \geq 0,  \label{eq-uh} \\
 \bh  : = \limsup_{n \to \infty} h_n, \qquad  \bh_n : = \sup_{m \geq n} h_m, \ \ \ n \geq 0. \label{eq-bh}
\end{align}
Note that as $n \to \infty$,
$\uh_n \uparrow \uh$ and $\bh_n \downarrow \bh.$
The next lemma about these functions follows directly from Theorem~\ref{thm-dcoe}, \cite[Lemma 7.30(2)]{bs}, and Assumption~\ref{cond-uc-1}.

\begin{lem} \label{lem-h-lsa-uc}
{\rm (UC)} Under Assumption~\ref{cond-uc-1}, 
all the functions $\uh$, $\bh$, $\uh_n$, $\bh_n$, $n \geq 0$, are lower semi-analytic and lie in a bounded subset of $\M_w(\X)$.
\end{lem}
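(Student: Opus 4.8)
The plan is to establish the two claims—lower semi-analyticity and uniform boundedness in $\M_w(\X)$—separately, exploiting that each building block $h_n$ inherits both properties from $v_{\alpha_n}$ and that both properties are stable under the countable infimum/supremum operations defining $\uh_n, \bh_n, \uh, \bh$.

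First I would record that, for (UC), Theorem~\ref{thm-dcoe} gives $v_{\alpha_n} \in \An(\X) \cap \M_w(\X)$; in particular $v_{\alpha_n}$ is finite everywhere and the scalar $v_{\alpha_n}(\bar x)$ is a finite constant. Since a finite constant function is lower semi-analytic, property~(iii) of Section~\ref{sec-2.1.1b} (sums of lower semi-analytic functions) shows $h_n = v_{\alpha_n} - v_{\alpha_n}(\bar x) \in \An(\X)$. Then, because $\uh_n, \bh_n, \uh, \bh$ are each a countable infimum, supremum, liminf, or limsup of the sequence $\{h_m\}$, property~(i) of that same section—namely \cite[Lemma 7.30(2)]{bs}—yields that all four families of functions are lower semi-analytic.

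For the boundedness claim, I would use Assumption~\ref{cond-uc-1} to fix $C := \sup_{\alpha \in (0,1)} \|h_\alpha\|_w < \infty$, so that $|h_m(x)| \leq C\, w(x)$ for every $m$ and $x$. A pointwise bound of the form $-C\, w(x) \leq h_m(x) \leq C\, w(x)$ is preserved under taking the infimum or supremum over $m \geq n$, and likewise under liminf and limsup, so each of $\uh_n, \bh_n, \uh, \bh$ is also bounded in absolute value by $C\, w(x)$ at every $x$ and hence has weighted norm at most $C$. In particular all these functions are finite-valued (as $w < \infty$) and universally measurable (being lower semi-analytic), so they indeed belong to $\M_w(\X)$ and lie in the closed ball of radius $C$, a bounded subset.

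I do not expect a genuine obstacle here: the result is a bookkeeping assembly of already-available facts, and the only points requiring a moment's care are to confirm that subtracting the finite constant $v_{\alpha_n}(\bar x)$ keeps $h_n$ lower semi-analytic (which relies on $v_{\alpha_n}$ being finite at $\bar x$, guaranteed by the $\M_w$-membership asserted in Theorem~\ref{thm-dcoe}), and that the single uniform $\|\cdot\|_w$ bound $C$ survives the inf/sup/liminf/limsup operations.
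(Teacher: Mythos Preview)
Your proposal is correct and takes essentially the same approach as the paper, which simply states that the lemma follows directly from Theorem~\ref{thm-dcoe}, \cite[Lemma 7.30(2)]{bs}, and Assumption~\ref{cond-uc-1}. You have spelled out in detail exactly those three ingredients: Theorem~\ref{thm-dcoe} for $v_{\alpha_n} \in \An(\X)$ (hence $h_n \in \An(\X)$ after subtracting a finite constant), \cite[Lemma 7.30(2)]{bs} for stability of lower semi-analyticity under countable inf/sup/liminf/limsup, and Assumption~\ref{cond-uc-1} for the uniform $\|\cdot\|_w$-bound.
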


We now introduce new conditions, which we use to replace compactness and continuity conditions used in the prior work on the ACOI for (UC):

\begin{assumption} \label{cond-uc-2}
In the model (UC), for each $x \in \X$ and $\epsilon > 0$, the following hold:
\begin{itemize}
\item[\rm (i)] There exist a compact set $K \subset \A$ and $0 < \bar \alpha  < 1$ such that for all $\alpha \in [\bar \alpha, 1)$,
\begin{equation} \label{cond-uc-2a}
 \inf_{a \in K \cap A(x)} \left\{ c(x, a) + \alpha \int_\X v_\alpha(y) \, q(dy \mid x, a) \right\} \leq v_\alpha(x) + \epsilon.  
\end{equation} 
\item[\rm (ii)] There exists a (nonnegative) finite measure $\nu$ on $\B(\X)$ that majorizes every $q(dy \,|\, x, a)$, $a \in K \cap A(x)$:
\begin{equation}
 \sup_{a \in K \cap A(x)} q( B \mid x, a) \leq \nu(B), \qquad \forall \, B \in \B(\X). \label{cond-uc-2b} 
\end{equation}
\item[\rm (iii)] The weight function $w(\cdot)$ is uniformly integrable w.r.t.\ $\{ q(dy \,|\, x, a) \mid a \in K \cap A(x) \}$ in the sense that
\begin{equation}
   \lim_{\ell \to \infty} \sup_{a \in K \cap A(x) }  \int_\X w(y) \, \ind\big[w(y) \geq \ell \, \big] \, q(dy \mid x, a) = 0, \label{cond-uc-2c}
\end{equation}
where $\ind(\cdot)$ denotes the indicator function.
\end{itemize}
\end{assumption}

Let us give here a preliminary discussion about these conditions. We will discuss them further and also give an illustrative example after we prove the ACOI,  since the roles of some of these conditions can be better seen then (see Section~\ref{sec-uc-acoi-discussion}).

First, there are cases where all or some of the conditions in Assumption~\ref{cond-uc-2} hold obviously.
For example, if for each $x$, $A(x)$ is finite, then all three conditions are satisfied by letting $K= A(x)$. (This simple setting is not of primary interest to us, however, because optimality results can be derived directly in this case, without using the proof approach that we are going to take.)
More generally, if $A(x)$ or $\A$ is compact, then (i)~holds trivially for $K = A(x)$ or $\A$. 
If $w(\cdot)$ is bounded from above on the union of the supports of the probability measures $q(dy \mid x, a), a \in K \cap A(x)$, such as in the case where the union is contained in a compact set and $w(\cdot)$ is continuous,  then (iii) is clearly satisfied. 
If $c(\cdot)$ is bounded, then $w(\cdot)$ can be chosen to be constant and (iii) then~holds trivially. 
Note that Assumption~\ref{cond-uc-2} does not require the one-stage cost function $c(\cdot)$ to have any special properties.

Regarding Assumption~\ref{cond-uc-2}(iii), it is implied by the slightly stronger, yet much simpler-looking condition $\int w \, d \nu < \infty$, where $\nu$ is the majorizing finite measure in Assumption~\ref{cond-uc-2}(ii). 
The condition $\int w \, d \nu < \infty$, however, can be inconvenient to verify when the measure $\nu$ is too complicated and a direct evaluation of the integral $\int w \, d \nu$ is impractical. In comparison, verifying the condition (iii) can be straightforward when the weight function $w(\cdot)$ has a simple analytical expression (e.g., when $x \in \R$ and $w(x) = e^x$ or $x^2$). This is why we have Assumption~\ref{cond-uc-2}(iii) as is. Note that the situation is different for the condition~(ii) in which $\nu$ appears, because to verify~(ii), we do not need the exact expression of $\nu$. It suffices that some finite measure with the desired majorization property \emph{exists}. This can be inferred qualitatively, in some cases, from the properties of the state transition stochastic kernel $q(dy\mid x,a)$, without the need for exact calculation. 

Assumption~\ref{cond-uc-2}(ii) is the key condition. Our purpose is to use this majorization condition to handle a certain class of discontinuous models. 
Although lower semicontinuous models cover a large class of problems and are mathematically elegant, discontinuity naturally occurs in physical systems. The behavior of such systems can vary gradually within certain regions of the state space but change abruptly across the boundaries of these regions, depending on which physical mechanisms come into effect.
The class of discontinuous models for which Assumption~\ref{cond-uc-2}(ii) can hold naturally are those where $q(dy \mid x, a)$ is not continuous in $a$ or $(x,a)$, but for all $a \in K \cap A(x)$, $q(dy \mid x, a)$ has a density $f_{x,a}$ w.r.t.\ a common ($\sigma$-finite) reference measure $\varphi$. The pointwise supremum of the density functions, $f_x: = \sup_{a \in K \cap A(x)} f_{x,a}$ (or a measurable function that upper-bounds it), when it belongs to $\mathcal{L}^1(\X, \B(\X), \varphi)$, 
defines a finite measure $\nu$, with $d\nu = f_x d \varphi$, that has the desired majorization property (\ref{cond-uc-2b}). 

Note that Assumption~\ref{cond-uc-2}(ii) need not be satisfied by continuous state transition stochastic kernels. For example, if $A(x)=[0,1]$ and $q(dy \,|\, x, a) = \delta_a$ (the Dirac measure at $a$), there is no finite measure with the desired majorization property.

Regarding Assumption~\ref{cond-uc-2}(i), in some circumstances, the existence of a compact set $K$ with the property (\ref{cond-uc-2a}) is close to being a necessary condition for the ACOI to hold. We shall discuss this condition further after we analyze the ACOI. There we will explain where this condition comes from, in particular, its connection with the theory on epi-convergence of functions (see Section~\ref{sec-uc-acoi-discussion}). Here let us remark that first, our proof of the ACOI will not use the compactness property of $K$, so it suffices that for some subset $K$ of actions, (\ref{cond-uc-2a}) and the rest of the assumptions hold. Second, the subset $K$ introduced in this condition is important in the subsequent Assumption~\ref{cond-uc-2}(ii): It would be too stringent to require a finite measure $\nu$ to majorize $q(dy \mid x, a)$ for all $a \in A(x)$ instead of $a \in K \cap A(x)$. For example, if $A(x) = \R$ and $q(dy \mid x, a)$ is the normal distribution $\mathcal{N}(a, 1)$ on $\R$, no finite measure can majorize these distributions for all $a \in \R$.

\begin{rem}[about the majorization condition in {\cite{GuS75}}] \label{rem-GuS-maj-cond}
Gubenko and Shtatland used a minorization condition and a majorization condition, alternatively, to convert the dynamic programming operator $T$ into a contraction (roughly speaking), thereby proving the ACOE via a contraction-based fixed point approach \cite[Theorem 2 and 2$'$]{GuS75}. Their majorization condition ~\cite[Sec.~3, Condition~(II)]{GuS75} is like a symmetric counterpart of their minorization condition, and it requires that there exists a finite measure $\nu$ on $\B(\X)$ such that
\begin{equation} \label{eq-GuS-cond}
  q(B \mid x, a) \leq \nu(B), \quad \forall \, B \in \B(\X), \ (x, a) \in \Gamma,  \qquad \text{and} \qquad \nu(\X) < 2. \qquad \qquad
\end{equation}  
Note that here the same measure $\nu$ needs to majorize $q(dy \mid x, a)$ for all states and admissible actions, whereas in our Assumption~\ref{cond-uc-2}, $\nu$ can be different for each state. The requirement $\nu(\X) < 2$ (needed for converting $T$ into a contraction) is too stringent and renders their condition (\ref{eq-GuS-cond}) impractical.\qed
\end{rem}

\subsubsection{Optimality Results} \label{sec-uc-acoi-prf}

We now prove the ACOI for (UC) under the assumptions introduced in the preceding subsection. The result and its proof involve the relative value functions $\{h_n\}$, the functions $\uh$, $\{\uh_n\}$, $\bh$, $\{\bh_n\}$, and also the scalar $\rho^* = \lim_{n \to \infty} (1 - \alpha_n) \, v_{\alpha_n} (\bar x)$ that we defined earlier (cf.~(\ref{eq-uc-ha})-(\ref{eq-bh})).

\begin{thm}[the ACOI for (UC)] \label{thm-uc-acoi}
For the (UC) model, under Assumptions~\ref{cond-uc-1}-\ref{cond-uc-2}, the optimal average cost function $g^*(\cdot) = \rho^*$, and with $\uh \in \An(\X) \cap \M_w(\X)$ as given in (\ref{eq-uh}), the pair $(\rho^*, \uh)$ satisfies the ACOI:
\begin{equation} \label{eq-uc-acoi}
  \rho^* + \uh(x) \geq \inf_{a \in A(x)} \left\{ c(x, a) + \int_\X \uh(y) \, q(dy \mid x, a) \right\}, \qquad x \in \X.
\end{equation}  
Hence there exist an optimal nonrandomized Markov policy and for each $\epsilon > 0$, an $\epsilon$-optimal nonrandomized stationary policy. 
\end{thm}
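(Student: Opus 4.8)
The plan is to run the vanishing discount factor argument at each fixed state $x$, starting from the $\alpha$-DCOE of Theorem~\ref{thm-dcoe}. Writing $v_{\alpha}(y)=v_{\alpha}(\bar x)+h_{\alpha}(y)$ inside the DCOE and subtracting $v_{\alpha}(\bar x)$, I obtain for each $\alpha$ the identity $(1-\alpha)v_{\alpha}(\bar x)+h_{\alpha}(x)=\inf_{a\in A(x)}\{c(x,a)+\alpha\int_\X h_{\alpha}(y)\,q(dy\mid x,a)\}$. Fixing $x$ and $\epsilon>0$, I then invoke Assumption~\ref{cond-uc-2}(i) to restrict the minimization to $K\cap A(x)$ at the cost of an additive $\epsilon$, which yields the pre-limit inequality
\[
\inf_{a\in K\cap A(x)}\Big\{c(x,a)+\alpha_n\int_\X h_n(y)\,q(dy\mid x,a)\Big\}\ \leq\ (1-\alpha_n)v_{\alpha_n}(\bar x)+h_n(x)+\epsilon,
\]
valid for all $n$ with $\alpha_n\geq\bar\alpha$. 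Taking $\liminf_n$, the right-hand side tends to $\rho^{*}+\uh(x)+\epsilon$ by (\ref{eq-rho}) and the definition of $\uh$, so everything reduces to passing the limit inside the minimization on the left.

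The crux is the inequality $\liminf_n \inf_{a\in K\cap A(x)}\{c(x,a)+\alpha_n\int_\X h_n\,q\}\geq \inf_{a\in K\cap A(x)}\{c(x,a)+\int_\X \uh\,q\}$, i.e.\ a limit that must be \emph{uniform in $a$}. I would establish it by bounding the minimand from below pointwise in $a$, with an error that is uniform in $a$. Using $h_n\geq\uh_n$ and $\uh_n\uparrow\uh$, the only quantity to control is $\sup_{a\in K\cap A(x)}\int_\X(\uh-\uh_n)\,q(dy\mid x,a)$, together with the residual $(1-\alpha_n)\int_\X \uh_n\,q$ coming from the factor $\alpha_n$; the latter is $O(1-\alpha_n)$ uniformly in $a$ because the functions are dominated by a multiple of $w$ and $\int_\X w\,q\leq \lambda w(x)+b$ by Definition~\ref{def-uc}(b). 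For the former, I use Assumption~\ref{cond-uc-2}(ii) to replace $q(dy\mid x,a)$ by the single finite measure $\nu$ on the nonnegative integrand $(\uh-\uh_n)\ind[w<\ell]$, which removes the $a$-dependence, and Assumption~\ref{cond-uc-2}(iii) to make the tail $\int_\X(\uh-\uh_n)\ind[w\geq\ell]\,q$ arbitrarily small uniformly in $a$ and $n$ (here $|\uh-\uh_n|\leq 2Cw$ with $C=\sup_n\|\uh_n\|_w<\infty$ by Lemma~\ref{lem-h-lsa-uc}). On the truncated region $\{w<\ell\}$, where the integrand is bounded and $\nu$ is finite, Egoroff's theorem (equivalently monotone convergence, since $\uh-\uh_n\downarrow 0$) gives $\int_\X(\uh-\uh_n)\ind[w<\ell]\,d\nu\to 0$. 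Combining the three pieces shows $\sup_{a}\int_\X(\uh-\uh_n)\,q\to0$, which is the uniform convergence needed. This uniform passage to the limit is the main obstacle, and it is precisely where the majorization condition earns its keep.

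With the crux in hand, taking $\liminf_n$ in the pre-limit inequality gives $\inf_{a\in K\cap A(x)}\{c(x,a)+\int_\X \uh\,q\}\leq \rho^{*}+\uh(x)+\epsilon$; since $K\cap A(x)\subseteq A(x)$, the left side dominates $\inf_{a\in A(x)}\{c(x,a)+\int_\X \uh\,q\}$, and letting $\epsilon\downarrow0$ yields the ACOI (\ref{eq-uc-acoi}) at $x$. As $x$ was arbitrary, the ACOI holds everywhere, and $\uh\in\An(\X)\cap\M_w(\X)$ is finite-valued by Lemma~\ref{lem-h-lsa-uc}.

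It remains to identify the constant and extract the policies. For $g^{*}\equiv\rho^{*}$, the upper bound $g^{*}(x)\leq\rho^{*}$ follows by selecting a stationary policy that nearly attains the right-hand side of the ACOI, iterating the inequality $n$ times, dividing by $n$, and using $\|\uh\|_w<\infty$ with Definition~\ref{def-uc}(b) to drive the residual $h$-term to zero. For the lower bound I note that $(1-\alpha)v_{\alpha}(x)\to\rho^{*}$ for every $x$ (because $(1-\alpha)h_\alpha(x)\to0$ under Assumption~\ref{cond-uc-1}), and then a standard Tauberian comparison $\limsup_{\alpha\uparrow1}(1-\alpha)v^{\pi}_{\alpha}(x)\leq J(\pi,x)$ together with $v_{\alpha}\leq v^{\pi}_{\alpha}$ gives $\rho^{*}\leq J(\pi,x)$ for all $\pi$, hence $\rho^{*}\leq g^{*}(x)$. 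Thus $g^{*}\equiv\rho^{*}$ is constant and finite, and the ACOI with $h=\uh$ places us exactly in the hypotheses of Lemma~\ref{lem-optpol}, which then delivers the optimal nonrandomized Markov policy and, for each $\epsilon>0$, the $\epsilon$-optimal nonrandomized stationary policy.
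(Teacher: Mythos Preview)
Your argument is correct and follows the paper's vanishing discount factor approach almost exactly, with two minor technical deviations worth noting. In the uniform-in-$a$ passage to the limit (the ``crux''), the paper first applies Egoroff's theorem with respect to the majorizing measure $\nu$ to obtain a set $D_\delta$ of nearly full $\nu$-measure on which $\uh_n\to\uh$ uniformly, and then uses the majorization $q(\X\setminus D_\delta\mid x,a)\le\nu(\X\setminus D_\delta)<\delta$ together with Assumption~\ref{cond-uc-2}(iii) (via the small-set characterization of uniform integrability) to control the remainder; your decomposition by the level sets $\{w<\ell\}$ and $\{w\ge\ell\}$ reverses the order of use of (ii) and (iii) and, as you observe, allows dominated/monotone convergence to replace the explicit Egoroff step, which is a slight simplification. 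For the lower bound $g^*\ge\rho^*$, the paper instead proves the reverse inequality $\rho^*+\bh(x)\le c(x,a)+\int_\X\bh(y)\,q(dy\mid x,a)$ for all $a\in A(x)$ by taking $\limsup$ in the $\alpha$-DCOE and invoking dominated convergence, whereas you use the Abel--Ces\`aro (Tauberian) comparison $\limsup_{\alpha\uparrow1}(1-\alpha)v^\pi_\alpha(x)\le J(\pi,x)$ combined with $(1-\alpha_n)v_{\alpha_n}(x)\to\rho^*$ along the chosen subsequence; both routes are standard and yield the same conclusion.
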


This theorem also implies that $\rho^*$ does not depend on our choice of the sequence $\{\alpha_n\}$ or the state $\bar x$, and
$$\lim_{\alpha \to 1} (1 - \alpha) \, v_\alpha(x) = g^*, \qquad \forall \, x \in \X.$$ 
To prove the theorem, we first prove a lemma.

\begin{lem} \label{lem-uc-2acoi}
{\rm (UC)} Under the assumptions of Theorem~\ref{thm-uc-acoi}, let $K$ be the compact set in Assumption~\ref{cond-uc-2} for a given $x \in \X$ and $\epsilon > 0$. Then
$$ 
\lim_{n \to \infty} \inf_{a \in K \cap A(x)} \left\{ c(x, a) + \alpha_n \int_\X \uh_n(y) \, q(dy \mid x, a) \right\} = \inf_{a \in K \cap A(x)} \left\{ c(x, a) +  \int_\X \uh(y) \, q(dy \mid x, a) \right\}.
$$
\end{lem}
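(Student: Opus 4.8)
The plan is to prove the stronger fact that the bracketed functions converge \emph{uniformly} over $a \in K \cap A(x)$; convergence of the infima then follows from the elementary inequality $\big|\inf_a G_n(a) - \inf_a G(a)\big| \le \sup_{a \in K \cap A(x)} |G_n(a) - G(a)|$, where I write $G_n(a) := c(x,a) + \alpha_n \int_\X \uh_n(y)\, q(dy\mid x,a)$ and $G(a) := c(x,a) + \int_\X \uh(y)\, q(dy\mid x,a)$. These are finite real numbers: $c(x,\cdot)$ is bounded on $A(x)$ by Definition~\ref{def-uc}(a), and by Lemma~\ref{lem-h-lsa-uc} there is $M < \infty$ with $\|\uh\|_w, \|\uh_n\|_w \le M$, so that $|\uh_n| \le M w$ and, via Definition~\ref{def-uc}(b), the integrals are bounded by $M(\lambda w(x)+b)$ uniformly in $a$ and $n$. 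Regrouping the difference gives
$$ G_n(a) - G(a) = (\alpha_n - 1)\int_\X \uh_n(y)\, q(dy\mid x,a) \;-\; \int_\X \big(\uh(y) - \uh_n(y)\big)\, q(dy\mid x,a), $$
and the first term is harmless: in absolute value it is at most $(1-\alpha_n)\,M(\lambda w(x)+b)$, which tends to $0$ and is independent of $a$.

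All the difficulty sits in the second term. Setting $f_n := \uh - \uh_n$, I use $\uh_n \uparrow \uh$ with $\uh$ finite to get $f_n \downarrow 0$ pointwise and $0 \le f_n \le 2Mw$; the goal is $\sup_{a\in K\cap A(x)} \int_\X f_n\, q(dy\mid x,a) \to 0$. One cannot simply dominate $f_n$ by $2Mw$ and apply dominated convergence under each $q(\cdot\mid x,a)$ or under $\nu$, because $\int w\, d\nu$ may be infinite---this is exactly the gap that Assumption~\ref{cond-uc-2}(iii) is meant to fill. So the plan is to truncate. Given $\eta > 0$, I would first invoke Assumption~\ref{cond-uc-2}(iii) to fix a level $\ell$ with $\sup_{a \in K\cap A(x)} \int_\X w(y)\,\ind[w(y)\ge\ell]\,q(dy\mid x,a) < \eta$, which caps the contribution of $f_n$ on $\{w\ge\ell\}$ uniformly in $a$ and $n$ (since $f_n\,\ind[w\ge\ell] \le 2Mw\,\ind[w\ge\ell]$).

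With $\ell$ fixed, I bring in Egoroff's theorem and the majorization. Applying Egoroff to $f_n\downarrow 0$ on the finite measure space determined by $\nu$ (the $f_n$ are universally measurable, hence measurable w.r.t.\ the completion of $\nu$) yields a set on which $f_n\to 0$ uniformly; I enlarge its complement to a Borel set $B$ of the same small measure, chosen with $\nu(B) < \eta/(2M\ell)$, so that the majorization of Assumption~\ref{cond-uc-2}(ii) applies to $B$ verbatim. Then, for every $a\in K\cap A(x)$, I split $\int_\X f_n\,q(dy\mid x,a)$ over the partition $\X\setminus B$, $B\cap\{w<\ell\}$, and $B\cap\{w\ge\ell\}$: the first piece is at most $\sup_{y\in\X\setminus B} f_n(y)$, which $\to 0$ uniformly in $a$ by Egoroff; the second is at most $2M\ell\,q(B\mid x,a) \le 2M\ell\,\nu(B) < \eta$ by the truncation and (ii); and the third is at most $2M\int_\X w(y)\,\ind[w\ge\ell]\,q(dy\mid x,a) < 2M\eta$ by the choice of $\ell$. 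Hence $\limsup_n \sup_{a\in K\cap A(x)} \int_\X f_n\,q(dy\mid x,a) \le (1+2M)\eta$, and letting $\eta\downarrow 0$ gives the uniform convergence of the second term, which completes the argument.

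The step I expect to be the main obstacle is precisely this last uniform estimate. Because no single $\nu$-integrable majorant for $\{f_n\}$ is available, the three ingredients must be orchestrated simultaneously: Assumption~\ref{cond-uc-2}(iii) discards the tail where $w$ is large, Egoroff's theorem handles the region where $f_n$ has not yet become small, and Assumption~\ref{cond-uc-2}(ii) is what converts smallness measured by the single reference measure $\nu$ into smallness that is uniform across all the kernels $q(\cdot\mid x,a)$, $a\in K\cap A(x)$. The only genuinely technical bookkeeping is the passage to the $\nu$-completion when applying Egoroff and the reduction of the exceptional set to a Borel set so that (ii) applies as stated; both are routine.
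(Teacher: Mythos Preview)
Your proof is correct and follows essentially the same approach as the paper: both establish uniform convergence of $\int(\uh-\uh_n)\,q(dy\mid x,a)$ over $a\in K\cap A(x)$ by combining Egoroff's theorem on the $\nu$-space, the majorization in Assumption~\ref{cond-uc-2}(ii), and the uniform integrability in Assumption~\ref{cond-uc-2}(iii). The only cosmetic difference is that the paper uses a two-piece split $D_\delta$ vs.\ $\X\setminus D_\delta$ and appeals to the equivalent characterization of uniform integrability (sets of small $q$-measure carry small $w$-integral), whereas you do an explicit three-piece split that invokes the truncation form of (iii) directly; both routes are equivalent.
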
 

\begin{proof} 
For the state $x$, $\epsilon > 0$, and the set $K$ given in the lemma, let $\nu$ be the corresponding finite measure on $\B(\X)$ in Assumption~\ref{cond-uc-2}(ii). Recall that $\uh_n \uparrow \uh$ and these functions are finite-valued and universally measurable (Lemma~\ref{lem-h-lsa-uc}). Therefore, 
by Egoroff's Theorem~\cite[Theorem~7.5.1]{Dud02}, for any $\delta > 0$, there exists a universally measurable set $D_\delta \subset \X$ with $\nu(\X \setminus D_\delta) < \delta$ such that on the set $D_\delta$, $\uh_n$ converges to $\uh$ uniformly as $n \to \infty$.
Consequently, for any $\eta > 0$, it holds for all $n$ sufficiently large that
\begin{equation} \label{eq-prf-thm-uc1a}
   \int_{D_\delta} \big(\uh(y) - \uh_n(y) \big)\, q(dy \mid x, a) \leq  \eta, \qquad \forall \, a \in  A(x).
\end{equation}
We now bound the integral of $\uh - \uh_n$ on the complement set $\X \setminus D_\delta$. 
By Lemma~\ref{lem-h-lsa-uc}, for all $n \geq 0$, $\| \uh - \uh_n \|_w \leq \ell$ for some constant $\ell$. So for all $a \in A(x)$,
$$ \int_{\X \setminus D_\delta} \big(\uh(y) - \uh_n(y) \big)\, q(dy \mid x, a) \, \leq  \, \ell  \int_{\X \setminus D_\delta}  w(y) \, q(dy \mid x, a).$$
By the choice of $D_\delta$ and the majorization property of $\nu$ in Assumption~\ref{cond-uc-2}(ii), we have
\begin{equation} \label{eq-prf-thm-uc1b}
 \sup_{a \in K \cap A(x)} q\big(\X \setminus D_\delta \mid x, a\big)  \leq \nu\big(\X \setminus D_\delta\big) < \delta.
\end{equation} 
By an alternative characterization of uniform integrability~\cite[Theorem 10.3.5]{Dud02}, (\ref{eq-prf-thm-uc1b}) together with the uniform integrability assumption in Assumption~\ref{cond-uc-2}(iii) implies that for any given $\eta > 0$, it holds for all $\delta$ sufficiently small that 
$\int_{\X \setminus D_\delta}  w(y) \, q(dy \mid x, a) \leq \eta$ for all $a \in K \cap A(x)$.
Consequently, given $\eta > 0$, by choosing $\delta$ sufficiently small, we can make
\begin{equation} \label{eq-prf-thm-uc1c}
 \sup_{a \in K \cap A(x)} \int_{\X \setminus D_\delta} \big(\uh(y) - \uh_n(y) \big)\, q(dy \mid x, a) \leq \eta.
\end{equation} 
Combining this with (\ref{eq-prf-thm-uc1a}), we obtain that for all $n$ sufficiently large,
$$ \sup_{a \in K \cap A(x)}  \int_\X  \big(\uh(y) - \uh_n(y) \big)\, q(dy \mid x, a) \leq 2 \eta$$
and hence
$$ \inf_{a \in K \cap A(x)} \left\{ c(x, a) + \alpha_n \int_\X \uh_n(y) \, q(dy \mid x, a) \right\} \geq \inf_{a \in K \cap A(x)} \left\{ c(x, a) + \alpha_n \int_\X \uh(y) \, q(dy \mid x, a) \right\} - 2 \eta.$$
Since $\eta$ is arbitrary and $\uh_n \leq \uh$, the lemma follows by letting $n \to \infty$ on both sides of the preceding inequality (and using also the fact that since $\sup_{a \in K \cap A(x)}  \int_\X |\uh(y) | \, q(dy \mid x, a) < \infty$ under Assumption~\ref{cond-uc-1} and the model condition of (UC), $(1 - \alpha_n) \sup_{a \in K \cap A(x)}  \int_\X |\uh(y)| \, q(dy \mid x, a) \to 0$). 
\end{proof}

\begin{proof}[Proof of Theorem~\ref{thm-uc-acoi}]
For each $x \in \X$ and $\epsilon > 0$, by the $\alpha$-DCOE (Theorem~\ref{thm-dcoe}) and Assumption~\ref{cond-uc-2}(i), for all $n$ sufficiently large
\begin{align}
   ( 1 - \alpha_n) \, v_{\alpha_n}(\bar x) + h_n(x) \, & = \, \inf_{a \in A(x)} \left\{ c(x, a) + \alpha_n \int_\X h_n(y) \, q(dy \mid x, a) \right\} \label{eq-prf-thm-uc-oe} \\
   & \geq \,  \inf_{a \in K \cap A(x)} \left\{ c(x, a) + \alpha_n \int_\X h_n(y) \, q(dy \mid x, a) \right\} - \epsilon \notag \\
   & \geq \,  \inf_{a \in K \cap A(x)} \left\{ c(x, a) + \alpha_n \int_\X \uh_n(y) \, q(dy \mid x, a) \right\} - \epsilon, \label{eq-prf-thm-uc0a}
\end{align}   
where the last inequality used the fact $\uh_n \leq h_n$ and that $\uh_n$ is universally measurable (Lemma~\ref{lem-h-lsa-uc}).
Letting $n \to \infty$ in both sides of (\ref{eq-prf-thm-uc0a}),
we have 
\begin{align*}
 \rho^* + \uh(x) + \epsilon & \, \geq \,  \liminf_{n \to \infty} \inf_{a \in K \cap A(x)} \left\{ c(x, a) + \alpha_n \int_\X \uh_n(y) \, q(dy \mid x, a) \right\} \\
   & \, = \,  \inf_{a \in K \cap A(x)} \left\{ c(x, a) +  \int_\X \uh(y) \, q(dy \mid x, a) \right\} 
    \, \geq \, \inf_{a \in A(x)} \left\{ c(x, a) +  \int_\X \uh(y) \, q(dy \mid x, a) \right\},
\end{align*}
where the equality follows from Lemma~\ref{lem-uc-2acoi}. Since this holds for every $x \in \X$ and $\epsilon$ is arbitrary, the desired inequality (\ref{eq-uc-acoi}) is proved.

To show $g^*(\cdot) = \rho^*$, as in the analysis in~\cite{JaN06}, it suffices to show that for the pair $(\rho^*, \bh)$, where $\bh = \limsup_{n \to \infty} h_n$ as we recall, the opposite inequality holds:
\begin{equation} \label{eq-prf-thm-uc-2a} 
    \rho^* + \bh(x) \leq \inf_{a \in A(x)} \left\{ c(x, a) + \int_\X \bh(y) \, q(dy \mid x, a) \right\}, \qquad x \in \X.
\end{equation}    
This is because the inequality (\ref{eq-prf-thm-uc-2a}) implies that for all policies $\pi$ and state $x$, the average cost $J(\pi,x) \geq \rho^*$, whereas the inequality (\ref{eq-uc-acoi}) just proved implies the opposite relation $J(\pi,x) \leq \rho^*$. (The proof of the former is standard and similar to that of Lemma~\ref{lem-optpol} given in Appendix~\ref{appsec-lem}, and the proof of the latter is the same as that of  Lemma~\ref{lem-optpol}.)
From the $\alpha$-DCOE (\ref{eq-prf-thm-uc-oe}), we have that for all $a \in A(x)$, 
\begin{equation}
( 1 - \alpha_n) \, v_{\alpha_n}(\bar x) + h_n(x) \, \leq \, c(x, a) + \alpha_n \int_\X h_n(y) \, q(dy \mid x, a),
\end{equation}
so by taking limit supremum of both sides as $n \to \infty$ and using also the fact $\bh_n \geq h_n$ by definition, we have
$$ \rho^* + \bh(x) \leq c(x, a) + \limsup_{n \to \infty} \alpha_n \int_\X \bh_n(y) \, q(dy \mid x, a) = c(x, a) +  \int_\X \bh(y) \, q(dy \mid x, a),$$
where the last inequality follows from the dominated convergence theorem, in view of Lemma~\ref{lem-h-lsa-uc} and the model condition of (UC). 
This proves (\ref{eq-prf-thm-uc-2a}) and hence $g^*(\cdot) = \rho^*$ as discussed earlier. Finally, that $\uh \in \An(\X) \cap \M_w(\X)$ follows from Lemma~\ref{lem-h-lsa-uc}, and the existence of an optimal Markov policy and $\epsilon$-optimal stationary policy follows from the ACOI proved above and Lemma~\ref{lem-optpol}.
\end{proof}

\subsubsection{Further Discussion on Assumption~\ref{cond-uc-2} and an Illustrative Example} \label{sec-uc-acoi-discussion}

In this discussion, to simplify notation, for pointwise limits of functions, we shall abbreviate the expression ``$n \to \infty$'' and write, for instance, ``$\lim_n$'' or ``$\liminf_n$'' instead. We shall assume $c(x,a) = +\infty$ if $a \not\in A(x)$, so that we can write ``$\inf_a f(a)$'' instead of ``$\inf_{a \in A(x)} f(a)$'' when $f$ is of the form $f(\cdot) = c(x, \cdot) + \psi(x, \cdot)$ for a given state $x$ and some function $\psi$.

Let us first explain the origin of Assumption~\ref{cond-uc-2}(i). In introducing this condition, we have been influenced by the theory on epi-convergence of functions on $\R^d$ \cite[Chap.\ 7]{RoW98}. A sequence of extended real-valued functions $\{f_n\}$ on $\R^d$ \emph{converges epigraphically} to a function $f$, denoted $f_n \eto f$, if as $n \to \infty$, $\epi(f_n)$ (the epigraph of $f_n$) converges to $\epi(f)$ in the sense of set convergence.
\footnote{For $E, E_n \subset \R^d$, $n \geq 0$, we say \emph{$\{E_n\}$ converges to $E$} if the following two conditions are met: (i) For every subsequence $\{n_k\}$ and convergent sequence $\{y_{n_k}\}$ with $y_{n_k} \in E_{n_k}$ and $y_{n_k} \to \bar y$ as $k \to \infty$, the limit $\bar y \in E$. (ii) For every $\bar y \in E$, there exist $y_n \in E_n$ for all $n$ sufficiently large, such that $y_n \to \bar y$ as $n \to \infty$. (See \cite[Definition 4.1]{RoW98}.)}
We denote this limit by $\elim_n f_n$. It is, by definition, lower semicontinuous and lies below $\liminf_n f_n$. 
For a nondecreasing sequence $\{f_n\}$, $\elim_n f_n$ always exists and equals $\sup_{n} (\cl f_n)$ \cite[Prop.~7.4(d)]{RoW98}, where $\cl f_n$ is the closure of $f_n$ (i.e., the function whose epigraph equals the closure of $\epi(f_n)$ or in other words, the largest lower semicontinuous function majorized by $f_n$). 
To relate such $f_n$ and their epi-limits to the functions involved in our problem, suppose that for every $x \in \X$, $A(x) \subset \Re^d$. Assume also that $\uh_n = h_n \geq 0$ for all $n$, for simplicity. Let us investigate when the ACOI is impossible. This will show us what kind of condition is needed for the desired ACOI to hold. 

For a given state $x$, let 
$f_n(a) = c(x,a) + \alpha_n \int_\X \uh_n(y) \, q(dy \mid x, a).$
Since the nonnegative functions $\uh_n \uparrow \uh$, the sequence $\{f_n\}$ is nondecreasing and hence $\elim_n f_n$ exists, as discussed earlier. By the monotone convergence theorem, the pointwise limit $\lim_n f_n$ also exists, and it is the function $c(x, \cdot) + \int_\X \uh(y) \, q(dy \mid x, \cdot)$ and lies above the epi-limit. 
Hence for all $m \geq 0$,
\begin{equation} \label{eq-diss-uc-cond-1}
\textstyle{\cl f_m \, \leq \, \elim_n f_n \, \leq \, \lim_n f_n}, 
\end{equation}
and since $\inf_a (\cl f_m)(a) = \inf_a f_m(a)$, it follows that
\begin{equation} \label{eq-diss-uc-cond-2}
  \textstyle{  \lim_n \inf_a f_n(a) \leq \inf_a \, (\elim_n f_n)(a) \leq \inf_a \left\{  c(x, a) + \int_\X \uh(y) \, q(dy \mid x, a)  \right\}.}
\end{equation}   
For the ACOI to hold, we need equality to hold throughout in (\ref{eq-diss-uc-cond-2}), so if the first inequality in (\ref{eq-diss-uc-cond-2}) is strict, it becomes impossible to obtain the desired ACOI. 
Thus we need the equality 
\begin{equation} \label{eq-diss-uc-cond3}
   \textstyle{\lim_n \inf_a f_n(a) = \inf_a  \, (\elim_n f_n)(a).}
\end{equation}   
Recall that for (UC), we have $ -\infty <  \inf_a  (\elim_n f_n)(a) < + \infty$. In such a case, by \cite[Theorem~7.31]{RoW98}, (\ref{eq-diss-uc-cond3}) holds if and only if for every $\epsilon > 0$, there exists a compact set $K \subset \R^d$ such that 
\begin{equation} \label{eq-diss-uc-cond4}
  \textstyle{ \inf_{a \in K} f_n(a) \, \leq \, \inf_{a} f_n(a) + \epsilon,} \qquad \text{for all $n$ sufficiently large}.
\end{equation}  
In the preceding discussion we have assumed $\A \subset \R^d$  so that we can use the epi-convergence results in \cite[Chap.\ 7]{RoW98} to shorten the discussion. When $\A$ is a general Borel space in the above setup, and also when $\uh_n=h_n$ are not assumed to be nonnegative, one can directly verify that the same conclusion is reached for a compact set $K \subset \A$. 

The inequality (\ref{eq-diss-uc-cond4}) is unwieldy to verify for a given problem, since, among others, $f_n$ depends on the particular choice of the sequence $\{\alpha_n\}$. Therefore, we consider a similar condition instead: For all $\alpha$ sufficiently close to $1$,
$$ \textstyle{ \inf_{a \in K} \left\{ c(x,a) + \alpha \int_\X h_\alpha(y) \, q(dy \mid x, a) \right\} \, \leq \, \inf_{a} \left\{ c(x,a) + \alpha \int_\X h_\alpha(y) \, q(dy \mid x, a) \right\}  + \epsilon.}$$
Since $h_\alpha$ differs from $v_\alpha$ by a constant and with $v_\alpha$ in place of $h_\alpha$, the right-hand side (r.h.s.)  above is $v_\alpha(x) + \epsilon$ by the $\alpha$-DCOE, the above condition is equivalent to Assumption~\ref{cond-uc-2}(i). 

Note that this condition alone does not guarantee the equality (\ref{eq-diss-uc-cond3}) in general when $\uh_n \not= h_n$. Even when (\ref{eq-diss-uc-cond3}) holds, the second inequality in (\ref{eq-diss-uc-cond-2}) can still be strict, ruling out the ACOI. So Assumption~\ref{cond-uc-2}(i) alone is insufficient.
It is Assumption~\ref{cond-uc-2}(ii)-(iii) that give us the rest of the help needed in establishing the ACOI.

We now use a simple example to illustrate when Assumption~\ref{cond-uc-2} holds and can be verified relatively straightforwardly. For the model (UC), because the one-stage cost $c(x,a)$ is bounded over the action set $A(x)$ for a given state $x$, we have not yet found an easy way to identify the compact set $K$ in Assumption~\ref{cond-uc-2}(i) when $A(x)$ is unbounded. (For comparison, in the case of the model (PC) that we will discuss next, an unbounded $c(x,\cdot)$ can help identify the set $K$.) It is for this reason that the example below deals only with a compact action space $\A$.

\begin{example} \label{ex-maj} \rm 
This example is related to \cite[Examples~7.4.2,~8.6.2-8.6.4, and~10.9.3-10.9.5]{HL99}. 
Let $\X = [0, \infty)$, $\A = [0, L]$ for some $L > 0$, 
and with $\ell^+ : = \max \{ \ell, 0\}$, let the states evolve according to 
$$ x_{n+1} = \big[ x_n + \eta_n(x_n, a_n) - \xi_n(x_n,a_n) \big]^+, \qquad n \geq 0. $$
Here we assume that given $(x_n,a_n)=(x,a) \in \X \times \A$, $\eta_n(x, a)$ and $\xi_n(x,a)$ are random variables that are independent of the history 
$\big\{\big(x_k, a_k, \eta_k(x_k, a_k), \xi_k(x_k,a_k) \big)\big\}_{k < n}$ and also mutually independent. Furthermore, we assume that their conditional probability distributions are parametrized by $(x,a)$ only and independent of $n$, and we denote these distributions by $F_{x,a}$ and $G_{x,a}$ for $\eta_n(x, a)$ and $\xi_n(x,a)$, respectively. This specifies indirectly $q(dy \mid x, a)$ for the problem. The admissible action sets $A(x), x \in \X$, are just subsets of $\A$; we do not need them to be compact.

This type of model appears in several applications, e.g., random-release dams, queueing and inventory-production systems (see the aforementioned examples in \cite{HL99}). In a random-release dam model, for example, $x_n$ could be the amount of water in the reservoir at the beginning of the $n$th stage, $\eta_n(x_n,a_n)$ and $\xi_n(x_n,a_n)$ could be the amount of inflow and outflow, respectively, during the $n$th stage. 
In an inventory-production system, $x_n$ could be the stock level and $a_n$ the amount of product ordered at the beginning of the $n$th stage; $\eta_n(x_n,a_n)$ could be the amount of product received and $\xi_n(x_n,a_n)$ the demand during the $n$th stage. 

Define $Z(x,a):  = \eta_0(x, a) - \xi_0(x,a)$. Similarly to \cite[Examples~8.6.4 and 10.9.3]{HL99}, suppose that the collection of probability distributions $F_{x,a}$ and $G_{x,a}$ are such that for all state and admissible action pairs $(x,a) \in \Gamma$, the random variables $Z(x,a)$ have negative means and furthermore, for some $\kappa > 0$,
\begin{equation} \label{eq-ex-lambda}
  \lambda : = \sup_{(x, a) \in \Gamma} \E \left[ e^{\kappa Z(x, a)} \right] < 1.
\end{equation}  
This constraint relates to system stability and can be met, for instance, by choosing the action sets $A(x)$ at each state accordingly.
Then, similarly to the derivations in \cite[Examples~8.6.4, p.~71-73]{HL99}, it can be shown that the weight function $w(x) : = e^{\kappa x}$ satisfies the (UC) model condition (b) (cf.~Def.~\ref{def-uc}) for the constants $b=1$ and $\lambda < 1$ given in (\ref{eq-ex-lambda}):
$$\sup_{a \in A(x)} \textstyle{\int_\X w(y) \, q(dy \mid x, a) } \leq \lambda \, w(x) +1, \qquad \forall \, x \in \X.$$
Suppose that the one-stage cost function also satisfies the bound $\sup_{a \in A(x)} | c(x, a)| \leq \hat c \, e^{\kappa x}$, $x \in \X$, for some $\hat c > 0$, so that the problem under consideration belongs to the (UC) class. Let us discuss now some cases where Assumption~\ref{cond-uc-2} holds. 

Let us choose the compact set $K = \A = [0, L]$ for every state $x$ and $\epsilon > 0$; then Assumption~\ref{cond-uc-2}(i) is satisfied trivially. Next, consider Assumption~\ref{cond-uc-2}(iii) in two different situations:
\begin{enumerate}
\item[(a)] Suppose that for every $(x,a) \in \Gamma$, we have $\eta_0(x,a), \xi_0(x,a) \geq 0$ (which is the case in the aforementioned applications). In addition, suppose that for all $a \in A(x)$, $\eta_0(x,a) \in [0, \ell_x]$ for some state-dependent constant $\ell_x$. Then given $x_0=x$, $x_{1}$ lies in some bounded interval $D_x$. 

Since the exponential weight function $w(y)=e^{\kappa y}$ is continuous, it is bounded above on $D_x$. Then Assumption~\ref{cond-uc-2}(iii) is satisfied trivially, as discussed earlier in Section~\ref{sec-uc-cond}.

\item[(b)] Alternatively, suppose that for each state $x$, $\eta_0(x,a)$ and $\xi_0(x,a)$ are normally distributed random variables that have means and variances bounded uniformly in $a$ over $A(x)$. Or more generally, suppose that they have light-tailed distributions with bounded means and tail probabilities that decrease faster than $e^{- \beta_x |y|}$ as $|y| \to \infty$, for some $\beta_x > 2 \kappa$. 

In this situation, it can be verified straightforwardly that the weight function $w(y) = e^{\kappa y}$ is uniformly integrable w.r.t.\ $\{q(dy \mid x, a) \mid a \in A(x)\}$, as required by Assumption~\ref{cond-uc-2}(iii).
\end{enumerate}

Finally, consider Assumption~\ref{cond-uc-2}(ii) for each state $x$. Suppose w.r.t.\ the Lebesgue measure, $F_{x,a}$, $G_{x,a}$, $a \in A(x)$, have density functions that are bounded above uniformly. Then, in the situation~(a) above, since with $x_0=x$, $x_{1}$ takes values in the bounded set $D_x$, we can simply let the majorizing finite measure $\nu$ in Assumption~\ref{cond-uc-2}(ii) be a multiple of the Lebesgue measure on $D_x$. In the situation~(b) above, there exists a function that not only majorizes all those density functions of $F_{x,a}$, $G_{x,a}$, $a \in A(x)$, but also decays faster than exponentially; subsequently, from this function, one can define a finite measure $\nu$ to satisfy Assumption~\ref{cond-uc-2}(ii). 

Note that we do not need the continuity of $F_{x,a}$ and $G_{x,a}$ in $a$ or in $(x,a)$. Nor do we need the continuity of their density functions.\qed
\end{example}

\subsection{The Case of Nonnegative Costs (PC)} \label{sec-pc-acoi}

We now consider the nonnegative cost model (PC) and introduce two additional assumptions for the study of the ACOI.
The first assumption includes two conditions from the prior work on the ACOI for MDP models satisfying certain continuity conditions.
Let 
$$m_\alpha : = \inf_{x \in \X} v_\alpha(x), \qquad \alpha \in (0,1).$$

\begin{assumption} \label{cond-pc-1}
For the model (PC): 
\begin{enumerate}
\item[\rm (G)] For some policy $\pi$ and state $x$, the average cost $J(\pi,x) < \infty$.
\item[\rm (\underline{B})] For every $x \in \X$, $\liminf_{\alpha \uparrow 1} (v_\alpha(x) - m_\alpha) < \infty$.
\end{enumerate}
\end{assumption}

Precursors to these conditions were introduced in the early work of Sennott \cite{Sen89} and Sch{\"a}l~\cite{Sch93} and then evolved as the research progressed. 
In particular, the condition (G) is the same as that in~\cite{Sch93} and by \cite[Lemma 1.2(b)]{Sch93}, it implies that 
$$ \limsup_{\alpha \to 1} \, (1 - \alpha) \, m_\alpha \leq \inf_{x \in \X} g^*(x) < \infty.$$ 
The condition (\underline{B}) is introduced more recently by Feinberg, Kasyanov, and Zadoianchuk~\cite{FKZ12} to weaken one condition in~\cite{Sch93} (cf.~(\ref{pc-cond-B}) in Example~\ref{ex-2}). That one needs only a pointwise (instead of uniform) upperbound on the relative value functions was first shown by Sennott~\cite{Sen89} for countable-space MDPs. Besides (G) and (\underline{B}), most existing prior work studies the ACOI under additional compactness/continuity conditions; see~\cite{FKZ12,Sch93} for the details of these conditions and their earlier forms (see also \cite{FeL07,VAm15} and the references therein for related work).

Let 
\begin{equation} \label{eq-pc-uh}
    h_\alpha : = v_\alpha - m_\alpha, \qquad  \uh : = \liminf_{\alpha \to 1} h_\alpha.
\end{equation}    
For each $\alpha \in [0, 1)$, define a function $\uh_\alpha$ as 
\begin{equation}
    \uh_\alpha(x) : = \inf_{\beta \in [\alpha, 1)} h_\beta(x), \qquad x \in \X.
\end{equation}    
Note that
\begin{equation}
\uh_\alpha \leq h_\beta \quad \forall \, \alpha \leq \beta < 1 , \qquad \text{and} \qquad \uh_\alpha \uparrow \uh \quad \text{as} \ \alpha \uparrow 1.
\end{equation}

\begin{lem} \label{lem-h-lsa-pc}
{\rm (PC)} Under Assumption~\ref{cond-pc-1}, 
the functions $\uh$ and $\uh_\alpha$, $\alpha \in (0, 1)$, are finite-valued and lower semi-analytic.
\end{lem}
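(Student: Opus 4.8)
The plan is to dispatch finiteness quickly and then concentrate on lower semi-analyticity, which is the real difficulty: $\uh_\alpha$ is an \emph{uncountable} infimum of lower semi-analytic functions, and the stability property~(i) of Section~\ref{sec-2.1.1b} preserves lower semi-analyticity only under \emph{countable} operations.

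For finiteness, note that $m_\beta = \inf_{x \in \X} v_\beta(x)$ gives $h_\beta = v_\beta - m_\beta \geq 0$, so $\uh_\alpha \geq 0$ and $\uh \geq 0$. For each fixed $\beta \in (0,1)$, condition~(G) of Assumption~\ref{cond-pc-1} provides a policy with finite $\beta$-discounted value at some state (a policy with finite average cost has finite discounted value, by an Abel-summation bound on the nonnegative costs), so $0 \leq m_\beta < \infty$ and $h_\beta$ is well defined. Since $\uh_\alpha \uparrow \uh$ as $\alpha \uparrow 1$, we get $0 \leq \uh_\alpha \leq \uh$, and condition~(\underline{B}) is exactly the statement that $\uh(x) = \liminf_{\beta \to 1} h_\beta(x) < \infty$ for every $x$. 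Hence $\uh$ and all $\uh_\alpha$ are finite-valued.

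For lower semi-analyticity I would avoid replacing $[\alpha,1)$ by a countable dense subset (which would force me to control the regularity of $\beta \mapsto h_\beta(x)$, awkward because $v_\beta(x)$, although nondecreasing in $\beta$, may have discontinuities whose locations depend on $x$). Instead I would show that $(\beta,x) \mapsto h_\beta(x)$ is \emph{jointly} lower semi-analytic on $[\alpha,1) \times \X$ and then recover $\uh_\alpha$ by partial minimization: taking the Borel space $[\alpha,1)$ as the ``$Y$'' variable and $D = \X \times [\alpha,1)$, the projection/partial-minimization result \cite[Prop.~7.47]{bs} yields that $\uh_\alpha(x) = \inf_{\beta \in [\alpha,1)} h_\beta(x)$ is lower semi-analytic in $x$. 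Then $\uh = \liminf_{\beta \to 1} h_\beta = \sup_n \uh_{\alpha_n}$ for any sequence $\alpha_n \uparrow 1$, a countable supremum of lower semi-analytic functions, hence lower semi-analytic by property~(i).

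The crux is the joint lower semi-analyticity of $(\beta,x) \mapsto v_\beta(x)$, since Theorem~\ref{thm-dcoe} only provides it for each fixed $\beta$. Here I would use value iteration from below, which for the nonnegative model (PC) gives $v^N_\beta := T_\beta^N \0 \uparrow v_\beta$ \cite[Chap.~9]{bs}, and argue by induction on $N$ that each $(\beta,x) \mapsto v^N_\beta(x)$ is jointly lower semi-analytic. The base case $v^0_\beta \equiv 0$ is trivial; for the step, regarding $q(dy \mid x,a)$ as a Borel kernel on $y$ given $(\beta,x,a)$, \cite[Prop.~7.48]{bs} makes $(\beta,x,a) \mapsto \int_\X v^{N-1}_\beta(y)\, q(dy \mid x,a)$ lower semi-analytic; multiplying by the nonnegative Borel factor $\beta$ and adding the lower semi-analytic $c(x,a)$ preserves lower semi-analyticity (property~(iii)), and minimizing over $a \in A(x)$, a partial minimization on the analytic set $[\alpha,1) \times \Gamma$, returns $v^N_\beta(x)$ lower semi-analytic in $(\beta,x)$ via \cite[Prop.~7.47]{bs}; a countable supremum over $N$ finishes the claim. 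To pass from $v_\beta$ to $h_\beta = v_\beta - m_\beta$, I use that $c \geq 0$ makes $v_\beta(x)$ nondecreasing in $\beta$, so $m_\beta = \inf_x v_\beta(x)$ is nondecreasing and hence \emph{Borel} in $\beta$; thus $(\beta,x) \mapsto -m_\beta$ is Borel, so lower semi-analytic, and $h_\beta(x) = v_\beta(x) + (-m_\beta)$ is jointly lower semi-analytic as a sum. The step I expect to be most delicate is precisely this upgrade from fixed-$\beta$ to joint lower semi-analyticity, together with the verification that subtracting the constant $m_\beta$ does not break measurability---both handled above by value iteration and the monotonicity of $m_\beta$.
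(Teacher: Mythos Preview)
Your proof is correct and its overall architecture matches the paper's: establish that $(\beta,x)\mapsto v_\beta(x)$ is jointly lower semi-analytic, then use the monotonicity of $\beta\mapsto m_\beta$ (from $c\geq 0$) to see that $-m_\beta$ is Borel, conclude that $(\beta,x)\mapsto h_\beta(x)$ is jointly lower semi-analytic, apply partial minimization \cite[Prop.~7.47]{bs} for $\uh_\alpha$, and take a countable limit for $\uh$.

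The one substantive divergence is in how you obtain the joint lower semi-analyticity of $v_\beta(x)$. The paper does this via the deterministic control model (DM): it expresses $v_\beta(x)=\bar v_\beta(\delta_x)$ as a partial infimum of $G_\beta(p_0,\gamma_0,\gamma_1,\ldots)=\limsup_n\sum_{k\leq n}\beta^k\bar c(\gamma_k)$ over the analytic set $\Delta$ of admissible sequences, and then observes that $G_\beta$ is lower semi-analytic jointly in $(\beta,z)$ (see Lemma~\ref{lem-valpha-lsc} in Appendix~\ref{appsec-lem}). You instead run value iteration from below, $T_\beta^N\0\uparrow v_\beta$, and propagate joint lower semi-analyticity through each iterate by induction. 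Your route is more elementary and self-contained for (PC), avoiding the (DM) machinery entirely; the paper's route, on the other hand, yields Lemma~\ref{lem-valpha-lsc} simultaneously for (UC), where convergence of $T_\beta^N\0$ to $v_\beta$ is not immediate. For the present lemma, which concerns only (PC), both arguments are equally valid.
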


\begin{proof}
That the functions $\uh$ and $\uh_\alpha$ are finite-valued is clear from Assumption~\ref{cond-pc-1}(\underline{B}). To prove that they are lower semi-analytic, we consider them as functions of $(\alpha, x)$, and we show first that $v_\alpha$ is a lower semi-analytic function of $(\alpha, x)$ on $(0, 1) \times \X$. This proof uses the deterministic control model (DM) corresponding to the MDP and is given in Appendix~\ref{appsec-lem} (see Lemma \ref{lem-valpha-lsc}).

Next, consider $m_\alpha$ as a function of $\alpha$.  Since the one-stage cost $c \geq 0$ in (PC), for each policy $\pi$ and initial state $x$, the $\alpha$-discounted value is non-decreasing as $\alpha$ increases. Therefore, $m_\alpha = \inf_{x \in \X} v_\alpha(x)$ is also monotonically non-decreasing as $\alpha$ increases. It follows that $m_\alpha$ is a Borel-measurable function of $\alpha$ on $(0,1)$ and so is $- m_\alpha$.  The latter together with the first part of the proof implies, by \cite[Lemma 7.30(4)]{bs}, that $h_\alpha(x) = v_\alpha(x) - m_\alpha$ is lower semi-analytic in $(\alpha, x)$. 

Now, since for each $\alpha$, $\uh_\alpha$ is the partial minimization of $h_\beta(x)$ over $\beta \in [\alpha, 1)$,  $\uh_\alpha$ is a lower semi-analytic function of $x$ by \cite[Prop.\ 7.47]{bs}. As $\uh$ is the pointwise limit of $\uh_\alpha$ as $\alpha \uparrow 1$, $\uh$ is lower semi-analytic by \cite[Lemma 7.30(2)]{bs}.
\end{proof}

We now impose three additional conditions on the model (PC). They are similar to Assumption~\ref{cond-uc-2} in the previous (UC) case. As before, we use these conditions in place of the compactness/continuity conditions used in the aforementioned prior work, to prove the ACOI for (PC).

\begin{assumption} \label{cond-pc-2}
In the model (PC), for each $x \in \X$ and $\epsilon > 0$, 
\begin{itemize}
\item[\rm (i)-(ii)] Assumption~\ref{cond-uc-2}(i)-(ii) hold;
\item[\rm (iii)] Assumption~\ref{cond-uc-2}(iii) holds with the function $\uh$ in place of the weight function $w$.
\end{itemize}
\end{assumption}

We have already discussed extensively the roles of the conditions (i)-(ii) and how they can be verified, in Sections~\ref{sec-uc-cond},~\ref{sec-uc-acoi-discussion}. An additional observation is that for the (PC) model, verifying the condition (i) can be straightforward when $\A$ is non-compact but $c(x, \cdot)$ is ``coercive'': 

\begin{example} \rm \label{ex-2}
Suppose that for each $x \in \X$, there exists a sequence of compact sets $A_n \subset \A$ such that 
$\inf_{a \not\in A_n} c(x, a) \to + \infty$ as $n \to \infty$. (For instance, $\A = \R^d$ and for each state $x$, $c(x, \cdot)$ is coercive: $c(x, a) \to +\infty$ as $\| a \| \to +\infty$.) Let Assumption~\ref{cond-pc-1}(G) hold and in addition, suppose that a stronger condition than Assumption~\ref{cond-pc-1}(\underline{B}) holds: 
\begin{equation} \label{pc-cond-B}
 \textstyle{\sup_{\alpha \in (0,1)}  \big( v_\alpha(x) - m_\alpha \big) = \sup_{\alpha \in (0,1)} h_\alpha(x) < \infty}, \qquad \forall \, x \in \X.
\end{equation} 
(This is the condition (B) in \cite{Sch93}. Under the condition (G), it is equivalent to $\limsup_{\alpha \uparrow 1} h_\alpha(x) < \infty$ for $x \in \X$~\cite[Lemma 5]{FKZ12}.) Then Assumption~\ref{cond-pc-2}(i) is also satisfied. 
To see this, note that 
$$  \textstyle{\inf_{a \in A(x)} \left\{ c(x, a) + \alpha \int_\X h_\alpha(y) \, q(dy \mid x, a) \right\}  = (1-\alpha) \, m_\alpha + h_\alpha(x),} \qquad \forall \, \alpha < 1,$$
by the $\alpha$-DCOE (Theorem~\ref{thm-dcoe}). The r.h.s.\ is bounded over $\alpha$, in view of (\ref{pc-cond-B}) and an implication of the condition (G), $\sup_{\alpha \in (0,1)} (1-\alpha) m_\alpha < \infty$ \cite[Lemma 1.2(b)]{Sch93}. So, for any $\epsilon > 0$, there exists $n$ sufficiently large such that for any $a \not\in A_n$, $c(x,a) >  (1-\alpha) m_\alpha + h_\alpha(x) + \epsilon$ for all $\alpha \in (0,1)$. Consequently, Assumption~\ref{cond-pc-2}(i) is satisfied by letting $K = A_n$.
\qed
\end{example}

Regarding the condition (iii), it is difficult to verify directly in practice. Instead, a viable way is to first find an upper bound $\hat h \geq \uh$ that has a simple analytical expression like the weight function $w$ in the previous (UC) case, and then verify the inequality (\ref{cond-uc-2c}) with $\hat h$ in place of $\uh$. If such a bound $\hat h$ is available (e.g., obtained in the process of verifying Assumption~\ref{cond-pc-1}(\underline{B})), then the discussions given in Sections~\ref{sec-uc-cond} and~\ref{sec-uc-acoi-discussion} about verifying (\ref{cond-uc-2c}) using the properties of $w$ and the state transition stochastic kernel also apply here with $\hat h$ in place of $w$.

Our ACOI result is as follows. Its conclusions are almost the same as those of Theorem~\ref{thm-uc-acoi} for the (UC) model and the proof is also similar. 

\begin{thm}[the ACOI for (PC)] \label{thm-pc-acoi}
For the (PC) model, suppose Assumptions~\ref{cond-pc-1}-\ref{cond-pc-2} hold. Let $\rho^* = \limsup_{\alpha \uparrow 1} (1 - \alpha) \, m_\alpha$ and let the finite-valued function $\uh \in \An(\X)$ be as given in (\ref{eq-pc-uh}). Then the optimal average cost function $g^*(\cdot) = \rho^*$, and the pair $(\rho^*, \uh)$ satisfies the ACOI:
\begin{equation} \label{eq-pc-acoi}
  \rho^* + \uh(x) \geq \inf_{a \in A(x)} \left\{ c(x, a) + \int_\X \uh(y) \, q(dy \mid x, a) \right\}, \qquad x \in \X.
\end{equation}  
Hence there exist an optimal nonrandomized Markov policy and for each $\epsilon > 0$, an $\epsilon$-optimal nonrandomized stationary policy. 
\end{thm}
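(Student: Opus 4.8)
The plan is to follow the template of the proof of Theorem~\ref{thm-uc-acoi} for (UC), adapting it to the nonnegative setting where $h_\alpha = v_\alpha - m_\alpha$ and where $\rho^*$ and $\uh$ are now a \emph{limsup} and a \emph{liminf}, respectively, taken over the continuous parameter $\alpha\uparrow 1$ rather than along one common sequence. The first step is to recast the $\alpha$-DCOE (Theorem~\ref{thm-dcoe}) in terms of $h_\alpha$: subtracting $m_\alpha$ from both sides and using that $q(\cdot\mid x,a)$ is a probability measure (so $\int_\X m_\alpha\,q(dy\mid x,a)=m_\alpha$) gives the identity
\begin{equation*}
(1-\alpha)\,m_\alpha + h_\alpha(x) = \inf_{a\in A(x)}\Big\{c(x,a)+\alpha\int_\X h_\alpha(y)\,q(dy\mid x,a)\Big\},
\end{equation*}
and, by the same substitution, Assumption~\ref{cond-pc-2}(i) is equivalent to the statement that for each $x$ and $\epsilon>0$ there is a compact $K$ with $\inf_{a\in K\cap A(x)}\{c+\alpha\int h_\alpha\,dq\}\le (1-\alpha)m_\alpha + h_\alpha(x)+\epsilon$ for all $\alpha$ near $1$. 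These are the exact analogues of~(\ref{eq-prf-thm-uc-oe}) and of the opening inequality in the proof of Theorem~\ref{thm-uc-acoi}.

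Second, I would prove the (PC) analogue of Lemma~\ref{lem-uc-2acoi}: for fixed $x,\epsilon,K$ and any $\alpha_n\uparrow 1$,
\begin{equation*}
\lim_{n\to\infty}\inf_{a\in K\cap A(x)}\Big\{c(x,a)+\alpha_n\int_\X \uh_{\alpha_n}(y)\,q(dy\mid x,a)\Big\}=\inf_{a\in K\cap A(x)}\Big\{c(x,a)+\int_\X \uh(y)\,q(dy\mid x,a)\Big\}.
\end{equation*}
The argument is verbatim that of Lemma~\ref{lem-uc-2acoi}, with the finite-valued $\uh$ (Lemma~\ref{lem-h-lsa-pc}) now playing the role $w$ played before: since $\uh_{\alpha_n}\uparrow\uh$ monotonically, Egoroff's theorem gives uniform convergence off a set $\X\setminus D_\delta$ of $\nu$-measure $<\delta$; on $D_\delta$ the integrals of $\uh-\uh_{\alpha_n}$ are uniformly small, while on $\X\setminus D_\delta$ the bound $0\le\uh-\uh_{\alpha_n}\le\uh$ together with the majorization~(\ref{cond-uc-2b}) (so that $q(\X\setminus D_\delta\mid x,a)\le\nu(\X\setminus D_\delta)<\delta$) and the uniform integrability of $\uh$ in Assumption~\ref{cond-pc-2}(iii) controls the tail. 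This is where the majorization idea does the real work, exactly as in (UC).

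Third, I would derive the ACOI~(\ref{eq-pc-acoi}). Combining the rewritten Assumption~\ref{cond-pc-2}(i) with $\uh_\alpha\le h_\alpha$ gives, for all $\alpha$ near $1$,
\begin{equation*}
(1-\alpha)m_\alpha + h_\alpha(x)+\epsilon \ge \inf_{a\in K\cap A(x)}\Big\{c(x,a)+\alpha\int_\X \uh_\alpha(y)\,q(dy\mid x,a)\Big\}.
\end{equation*}
Taking $\liminf_{\alpha\uparrow1}$ on both sides, the right-hand side is monotone in $\alpha$ and hence tends, along any $\alpha_n\uparrow1$, to $\inf_{a\in K\cap A(x)}\{c+\int\uh\,dq\}$ by the lemma, while on the left the elementary observation $\liminf_{\alpha\uparrow1}[(1-\alpha)m_\alpha+h_\alpha(x)]\le \limsup_{\alpha\uparrow1}(1-\alpha)m_\alpha+\liminf_{\alpha\uparrow1}h_\alpha(x)=\rho^*+\uh(x)$ bounds the limit above by $\rho^*+\uh(x)+\epsilon$. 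Since passing from $A(x)$ to the smaller set $K\cap A(x)$ can only increase the infimum, and $\epsilon$ is arbitrary, this yields~(\ref{eq-pc-acoi}). I expect this $\liminf$--$\limsup$ reconciliation to be the main obstacle and the essential point of difference from (UC): there $\uh$ was defined as the liminf along the \emph{same} sequence used to form $\rho^*$, so the two matched automatically, whereas here they live on the continuum and must be reconciled by the inequality above.

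Finally, for $g^*(\cdot)=\rho^*$ I would establish the two bounds separately rather than through a reverse-ACOI as in (UC). The lower bound $\rho^*\le\inf_{x}g^*(x)\le g^*(x)$ follows from Assumption~\ref{cond-pc-1}(G) and \cite[Lemma 1.2(b)]{Sch93} (already quoted after Assumption~\ref{cond-pc-1}), while the opposite bound $g^*(x)\le\rho^*$ comes from the ACOI by the standard Lyapunov/telescoping argument of Lemma~\ref{lem-optpol}, using $\uh\ge0$ to discard the terminal term. Note that the reverse-ACOI route of (UC) is unavailable here, since under Assumption~\ref{cond-pc-1}(\underline{B}) only the liminf $\uh$ is finite and $\bh=\limsup_{\alpha}h_\alpha$ may be $+\infty$; this is precisely why the lower bound is taken from Sch{\"a}l's lemma instead. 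Once $g^*=\rho^*$ is known to be constant and finite and the ACOI holds, the existence of an optimal nonrandomized Markov policy and, for each $\epsilon>0$, an $\epsilon$-optimal nonrandomized stationary policy follows directly from Lemma~\ref{lem-optpol}.
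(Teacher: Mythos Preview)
Your proposal is correct and follows essentially the same approach as the paper. The only cosmetic difference is in the ``$\liminf$--$\limsup$ reconciliation'' you flag: the paper handles it by fixing $x$ first and then choosing an $x$-dependent sequence $\alpha_n\uparrow 1$ along which $h_{\alpha_n}(x)\to\uh(x)$ (so that $\limsup_n(1-\alpha_n)m_{\alpha_n}\le\rho^*$ automatically), whereas you invoke the inequality $\liminf(a+b)\le\limsup a+\liminf b$ directly on the continuum---but unpacking that inequality amounts to exactly the paper's sequence choice.
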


\begin{proof}
Consider an arbitrary $x \in \X$ and fix it in the proof below. Choose a sequence $\{\alpha_n\}$ such that 
$$\lim_{n \to \infty} h_{\alpha_n}(x) = \liminf_{\alpha \to 1} h_\alpha(x) = \uh(x).$$ 
Note that $\rho^* \geq \limsup_{n \to \infty} (1 - \alpha_n) \, m_{\alpha_n}$. Recall also that $\rho^* \leq \inf_{y \in \X} g^*(y)$ by~\cite[Lemma 1.2(b)]{Sch93}.
Define $h_n := h_{\alpha_n}$ and $\uh_n := \uh_{\alpha_n}$. Then $\uh_n \leq h_n$ and $\uh_n \uparrow \uh$ as $n \to \infty$.

The rest of the proof is similar to the first part of the proof for Theorem~\ref{thm-uc-acoi}. 
Let $\epsilon > 0$. For the fixed state $x$, by subtracting $\alpha m_\alpha$ from both sides of the $\alpha$-DCOE (Theorem~\ref{thm-dcoe}), we have that for all $n \geq 0$,
\begin{align}
   ( 1 - \alpha_n) \, m_{\alpha_n} + h_n(x) \, & = \, \inf_{a \in A(x)} \left\{ c(x, a) + \alpha_n \int_\X h_n(y) \, q(dy \mid x, a) \right\} \label{eq-prf-thm-pc-oe}.
\end{align}
Then similarly to the derivation of (\ref{eq-prf-thm-uc0a}), we apply Assumption~\ref{cond-pc-2}(i) and replace $h_n$ by $\uh_n$ in the r.h.s.\ above before letting $n \to \infty$ in both sides of (\ref{eq-prf-thm-pc-oe}). This results in the inequality
\begin{align}  \label{eq-prf-thm-pc-oe2}
    \rho^* + \uh(x)     
    & \, \geq \,   \liminf_{n \to \infty} \inf_{a \in K \cap A(x)} \left\{ c(x, a) + \alpha_n \int_\X \uh_n(y) \, q(dy \mid x, a) \right\} - \epsilon
\end{align}   
where $K$ is the compact set in Assumption~\ref{cond-pc-2}(i) for the fixed state $x$ and $\epsilon > 0$, and the integration operation $\int \uh_n(y) \, q(dy \, |\, x, a)$ is valid since $\uh_n$ is universally measurable by Lemma~\ref{lem-h-lsa-pc}. What remains to show is that the conclusion of Lemma~\ref{lem-uc-2acoi} holds so that
\begin{equation} \label{eq-prf-thm-pc-oe3}
\liminf_{n \to \infty} \inf_{a \in K \cap A(x)} \left\{ c(x, a) + \alpha_n \int_\X \uh_n(y) \, q(dy \mid x, a) \right\} 
= \inf_{a \in K \cap A(x)} \left\{ c(x, a) + \int_\X \uh(y) \, q(dy \mid x, a) \right\}.
\end{equation}
For this, we can use the same proof of Lemma~\ref{lem-uc-2acoi} except for two small changes. 
The first change is in the proof of the bound (\ref{eq-prf-thm-uc1c}), $\sup_{a \in K \cap A(x)} \int_{\X \setminus D_\delta} \big(\uh(y) - \uh_n(y) \big)\, q(dy \mid x, a) \leq \eta$ for a given $\eta > 0$ and for all $n$ sufficiently large. Here we use the fact that $\uh - \uh_n \leq \uh$ in this case, and we also use the uniform integrability condition in Assumption~\ref{cond-pc-2}(iii) for the nonnegative function $\uh$, instead of the weight function $w$. The second change is that at the end of the proof of Lemma~\ref{lem-uc-2acoi}, to have $(1 - \alpha_n) \sup_{a \in K \cap A(x)}  \int_\X \uh(y) \, q(dy \mid x, a) \to 0$, we need $\sup_{a \in K \cap A(x)}  \int_\X \uh(y) \, q(dy \mid x, a) < \infty$, and this is implied by the uniform integrability condition on $\uh$ in Assumption~\ref{cond-pc-2}(iii).

We now combine the relations (\ref{eq-prf-thm-pc-oe2}) and (\ref{eq-prf-thm-pc-oe3}) to obtain, as before,
$$  \rho^* + \uh(x) + \epsilon \, \geq \inf_{a \in K \cap A(x)} \left\{ c(x, a) + \int_\X \uh(y) \, q(dy \mid x, a) \right\} \, \geq \, \inf_{a \in A(x)} \left\{ c(x, a) + \int_\X \uh(y) \, q(dy \mid x, a) \right\}.$$
Since this holds for an arbitrary $x \in \X$ and an arbitrary $\epsilon > 0$, we obtain the desired ACOI.
Since $\rho^* \leq g^*(\cdot)$~\cite[Lemma 1.2(b)]{Sch93}, the ACOI just established implies that we must have $g^*(\cdot) = \rho^*$ \cite[Prop.\ 1.3]{Sch93} (the proof is essentially the same as that of Lemma~\ref{lem-optpol} given in Appendix~\ref{appsec-lem}). Finally, the existence of an average-cost optimal Markov policy and $\epsilon$-optimal stationary policy follows from Lemma~\ref{lem-h-lsa-pc}, the ACOI proved above, and Lemma~\ref{lem-optpol}. This completes the proof.
\end{proof}

\section{Other Results: The Existence of a Minimum Pair} \label{sec-4}

In this section, we consider the minimum pair approach (\cite{HLe93,Kur89}; \cite[Chap.\ 5.7]{HL96}) for the case of nonnegative, strictly unbounded costs. Compared with the vanishing discount factor approach, this is a direct approach to analyzing average-cost MDPs.
Let $J(\pi, p)$ denote the average cost of a policy $\pi$ for an initial state distribution $p \in \P(\X)$.
A pair $(\pi^*, p^*) \in \Pi \times \P(\X)$ is called a \emph{minimum pair} if
$$ J(\pi^*, p^*) = \inf_{p \in \P(\X)} \inf_{\pi \in \Pi} J(\pi, p).$$
The question we shall focus on is whether there exists a minimum pair with $\pi^*$ being a stationary policy. When this is the case, under an additional recurrence condition on the Markov chain induced by $\pi^*$, one can ensure that $\pi^*$ is not only optimal but also pathwise optimal for the average cost criterion (see \cite[Theorem 5.7.9(b)]{HL96}), and furthermore, it is also possible to construct from $\pi^*$ a nonrandomized stationary policy with the same property (see \cite[Theorem~2.2 and its proof]{Kur89}).

The minimum pair approach for lower semicontinuous models with strictly unbounded costs has been discussed in detail in the books \cite[Chap.~5.7]{HL96},~\cite[Chap.~11.4]{HL99}. Our interest is again to replace the continuity conditions with majorization type conditions similar to the ones in Section~\ref{sec-3}. In the present case, however, our results are less general: they apply only to a discrete action space. With a discrete $\A$, one can work with Borel measurable policies without encountering measurability issues, so \emph{in this section, we let the policy space $\Pi$ be the set of Borel measurable policies}. The state space $\X$ is still assumed to be a Borel space in our results. (For comparison, for a discrete $\A$, a measure space $\X$ with measurable singleton subsets has been considered~\cite{RiS92}.)

We state the conditions below. The condition (G) is to exclude vacuous problems, and the condition (SU) defines what we mean by strictly unbounded costs. These are standard conditions. The condition (M) is the new majorization condition that we introduce.

\begin{assumption} \label{cond-pc-3}
For the model (PC), the action space $\A$ is a countable space with the discrete topology, the one-stage cost function $c$ and the graph $\Gamma$ of the control constraint are Borel measurable, and furthermore, the following hold:
\begin{enumerate}
\item[\rm (G)] For some policy $\pi$ and state $x \in \X$, the average cost $J(\pi,x) < \infty$.
\item[\rm (SU)] There exist two increasing sequences of compact sets $\{K_n\}$ and $\{A_n\}$ with $K_n \subset \X$ and $A_n \subset \A$, such that
$$ \lim_{n \to \infty} \inf_{(x, a) \not\in K_n \times A_n} c(x, a) = + \infty.$$
\item[\rm (M)] For each set $K \in \{ K_n\}$,  there exist an open set $O \supset K$ and a finite measure $\nu$ on $\B(\X)$ such that
\begin{equation}
     q\big((O \setminus D) \cap B  \mid x, a \big) \leq \nu(B), \qquad \forall \, B \in \B(\X), \ (x, a) \in \Gamma, \label{eq-maj-minpair}
\end{equation}     
where $D \subset \X$ is some closed set (possibly empty) such that restricted to $D \times \A$, the state transition stochastic kernel $q(dy\mid x, a)$ is continuous and the one-stage cost function $c$ is lower semicontinuous. 
\end{enumerate}
\end{assumption}
\smallskip

In the majorization condition (M), roughly speaking, we divide the state space into two parts, a closed set $D$ on which the model has the appealing continuity properties, and the complement set of $D$ on which we impose a majorization condition (\ref{eq-maj-minpair}). The condition (M) is satisfied trivially by letting $D = \X$, if the entire model is lower semicontinuous (this means, in the discrete action setting considered here, that for each $a \in \A$, $q(dy \,|\, x, a)$ is continuous in $x$ and $c(x,a)$ is lower semicontinuous in $x$). For discontinuous models in general, the condition (M) seems natural in cases where the probability measures $\{q(\cdot \mid x, a) \mid (x, a) \in \Gamma\}$ have densities on the complement set $D^c : = \X \setminus D$, w.r.t.\ a common ($\sigma$-finite) reference measure. In such cases, under practical conditions on those density functions, the condition (M) holds; see Section~\ref{sec-minpair-ex} for an illustrative example. 

Our main results are as follows. They are analogous to the prior results for lower semicontinuous models (see \cite[Lemma 2.1, Theorems 2.1, 2.2]{Kur89} and \cite[Lemma 5.7.10 and Theorem 5.7.9(a)]{HL96}).

\begin{prop} \label{prp-su-mp}
{\rm (PC)} Under Assumption~\ref{cond-pc-3}, for any pair $(\pi, p) \in \Pi \times \P(\X)$ with $J(\pi, p) < \infty$, there exists a stationary policy $\bar \mu$ and $\bar p \in \P(\X)$ such that $\bar p$ is an invariant probability measure of the Markov chain on $\X$ induced by $\bar \mu$ and $J(\bar \mu, \bar p) \leq J(\pi, p)$.
\end{prop}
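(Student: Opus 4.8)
The plan is to follow the occupation-measure route of the minimum pair approach \cite{HLe93,Kur89}, substituting the majorization condition (M) together with Lusin's theorem for the weak Feller continuity of $q$ and the lower semicontinuity of $c$ that the classical argument relies on. Given $(\pi,p)$ with $J(\pi,p)<\infty$, I would first form the expected state-action occupation measures
$$\mu_n(C) := \tfrac1n \sum_{k=0}^{n-1} \E^\pi_p\big[\ind[(x_k,a_k)\in C]\big], \qquad C \in \B(\X\times\A).$$
Since $c\ge 0$ and $\int c\,d\mu_n = \tfrac1n J_n(\pi,p)$, I pick a subsequence (still indexed by $n$) along which $\int c\,d\mu_n \to \liminf_n \tfrac1n J_n(\pi,p)\le J(\pi,p)$. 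Condition (SU) forces the mass of $\mu_n$ outside $K_n\times A_n$ to vanish uniformly (a Markov-type inequality applied to $c$), so $\{\mu_n\}$ is tight and, after a further subsequence, $\mu_n \to \mu$ weakly for some $\mu\in\P(\X\times\A)$. Because $\A$ is discrete and countable, I may disintegrate $\mu = \bar p \otimes \bar\mu$, where $\bar p\in\P(\X)$ is the $\X$-marginal and $\bar\mu(\cdot\mid x)$ is a Borel stochastic kernel on $\A$ that can be taken to obey the control constraint; these are the candidate invariant measure and stationary policy.

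The key difficulty is that the two limit passages needed next — the flow/balance identity and the cost functional — require continuity of $q$ and lower semicontinuity of $c$, which by (M) hold only on the closed set $D$. I would exploit (M) as follows. The one-step identity $\E^\pi_p[\ind[x_k\in B]] = \E^\pi_p[q(B\mid x_{k-1},a_{k-1})]$, averaged over $k=1,\dots,n$, gives
$$(\mu_n)_\X(B) = \int_{\X\times\A} q(B\mid x,a)\,\mu_n(dx,da) + O(1/n);$$
restricting $B$ to the (open) bad region $O\setminus D$ and using the majorization (\ref{eq-maj-minpair}) yields $(\mu_n)_\X\big((O\setminus D)\cap B\big)\le \nu(B)+O(1/n)$, and likewise $\bar p$ is dominated by the finite measure $\nu$ on $O\setminus D$. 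This is the crux: it converts control on the bad region into domination by a \emph{finite} measure, which makes Lusin's theorem applicable with respect to $\nu$. For each of the countably many $a\in\A$, Lusin's theorem yields a closed set on which $x\mapsto c(x,a)$ and $x\mapsto q(\cdot\mid x,a)$ are continuous, with the $\nu$-measure of the complement inside $O\setminus D$ as small as desired; adjoining $D$ and summing a budget $\epsilon\,2^{-j}$ over the actions gives an enlarged continuity region off which the remaining occupation mass is uniformly negligible.

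With this continuity region in hand, I would pass to the limit in the balance identity: for bounded continuous $f$ on $\X$ the map $(x,a)\mapsto\int_\X f\,dq(\cdot\mid x,a)$ is continuous on the enlarged region, so weak convergence handles that part while the $\nu$-bound controls the residual, giving
$$\int_{\X\times\A} \Big(\int_\X f\,dq(\cdot\mid x,a)\Big)\,\mu(dx,da) = \int_\X f\,d\bar p.$$
Writing $\bar q(\cdot\mid x) := \int_\A q(\cdot\mid x,a)\,\bar\mu(da\mid x)$ for the kernel of the chain induced by $\bar\mu$, this is precisely the invariance $\bar p = \bar p\,\bar q$. For the cost I would argue analogously that $\int c\,d\mu \le \liminf_n \int c\,d\mu_n$: tightness confines the relevant mass to $K_n\times A_n$, on the Lusin region $c$ is lower semicontinuous so Portmanteau/truncation for $c\ge0$ gives the bound there, and majorization makes the excluded contribution arbitrarily small. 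Since for a stationary policy started from an invariant measure the long-run average cost equals the one-stage expected cost, this yields $J(\bar\mu,\bar p)=\int c\,d\mu \le \liminf_n \tfrac1n J_n(\pi,p) \le J(\pi,p)$.

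The main obstacle is exactly these two weak-limit passages in the presence of discontinuous $q$ and $c$, and the (M)-plus-Lusin combination is what resolves them: (M) dominates the bad-region occupation mass by a finite $\nu$, and Lusin's theorem then extracts closed sets that are $\nu$-large — hence large for every occupation marginal there — on which the discontinuous integrands become continuous. The countability of $\A$ is essential, since it is what lets the per-action Lusin selections be combined; extending beyond discrete $\A$ would require a joint (in $a$) selection and is left for future work.
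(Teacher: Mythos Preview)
Your proposal is correct and follows essentially the same route as the paper: form averaged occupation measures, use (SU) for tightness and extract a weak limit, disintegrate it into $(\bar p,\bar\mu)$, then combine the majorization bound on $\bar p_n|_{O\setminus D}$ with Lusin's theorem to pass to the limit in the balance identity and the cost functional. The paper's execution differs only in minor technicalities: it first uses tightness to restrict to a \emph{finite} action set $F=A_{\bar n}$ (so Lusin is applied only finitely many times rather than with your $\epsilon 2^{-j}$ budget over all of $\A$), and it makes the weak-limit step precise by extending the integrand from the closed continuity set $(D\cup B)\times F$ to a globally continuous or lower semicontinuous function via the Tietze--Urysohn theorem before invoking weak convergence.
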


\begin{thm} \label{thm-su-mp}
{\rm (PC)} Under Assumption~\ref{cond-pc-3}, there exists a minimum pair $(\bar \mu, \bar p)$, where $\bar \mu$ is a stationary policy and $\bar p$ is an invariant probability measure of the Markov chain on $\X$ induced by $\bar \mu$.
\end{thm}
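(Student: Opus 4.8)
The plan is to recast the problem in terms of state--action occupation measures and to show that a minimizing sequence of such measures has a weak limit that is itself a valid occupation measure of the required type. First I would set $\rho^* := \inf_{p\in\P(\X)}\inf_{\pi\in\Pi} J(\pi,p)$, which is finite by condition (G). By Proposition~\ref{prp-su-mp}, every pair with finite average cost can be replaced by a stationary policy together with an invariant probability measure of no greater cost, so $\rho^*$ equals the infimum of $J(\mu,p)$ taken over all pairs $(\mu,p)$ with $\mu$ stationary and $p$ invariant for the induced chain. For such a pair I would form the occupation measure $\eta(dx,da):=p(dx)\,\mu(da\mid x)\in\P(\Gamma)$. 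Then $J(\mu,p)=\int_\Gamma c\,d\eta$, and the invariance of $p$ becomes the linear constraint $\int_\Gamma q(B\mid x,a)\,\eta(dx,da)=\eta(B\times\A)$ for every $B\in\B(\X)$. The theorem thus reduces to showing that $\inf\{\int_\Gamma c\,d\eta : \eta\in\mathcal{O}\}$ is attained, where $\mathcal{O}$ denotes the set of occupation measures satisfying this constraint.

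Next I would take a sequence $\eta_k\in\mathcal{O}$ with $\int c\,d\eta_k\downarrow\rho^*$ and establish tightness from the strictly-unbounded-cost condition (SU): since $c\ge0$ and $C:=\sup_k\int c\,d\eta_k<\infty$, a Markov-type inequality gives $\eta_k\big((K_n\times A_n)^c\big)\le C/c_n$ for $c_n:=\inf_{(x,a)\notin K_n\times A_n}c(x,a)\to\infty$, so the mass outside the compact sets $K_n\times A_n$ is uniformly small. By Prokhorov's theorem I would extract a subsequence with $\eta_k\to\bar\eta$ weakly. Using the discreteness of $\A$ together with tightness, I would check that $\bar\eta$ concentrates on $\Gamma$, so that its $\X$-marginal $\bar p$ and disintegration $\bar\mu(da\mid x)$ are well defined; the candidate minimum pair is $(\bar\mu,\bar p)$.

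The heart of the proof, and the step I expect to be the main obstacle, is passing both the cost functional and the invariance constraint to the weak limit despite the discontinuity of $c$ and $q$ in the state variable. Here I would deploy the majorization condition (M) in tandem with Lusin's theorem. For a fixed $K=K_n$ with its neighbourhood $O$, closed good set $D$, and majorizing measure $\nu$ from (M), Lusin's theorem supplies, for any $\epsilon>0$, a closed set $C'\subset\X$ with $\nu(\X\setminus C')<\epsilon$ on which the relevant Borel functions (the integrand $c$, and $x\mapsto\int g\,dq(\cdot\mid x,a)$ for bounded continuous $g$) are continuous. On $D$ I would use the built-in continuity of $q$ and lower semicontinuity of $c$, and on $C'$ the continuity furnished by Lusin, to pass weak limits via the portmanteau theorem; on the residual bad region the bound (\ref{eq-maj-minpair}) caps the forward mass uniformly by $\nu(\X\setminus C')<\epsilon$, while tightness controls the tails outside $O$. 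Combining these estimates over $n$ should yield both $\int_\Gamma c\,d\bar\eta\le\liminf_k\int_\Gamma c\,d\eta_k=\rho^*$ (lower semicontinuity of the cost) and $\int_\Gamma q(B\mid\cdot)\,d\bar\eta=\bar\eta(B\times\A)$ (closedness of $\mathcal{O}$ under the limit, so $\bar p$ is invariant for $\bar\mu$).

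Finally, disintegrating $\bar\eta=\bar p(dx)\,\bar\mu(da\mid x)$ produces a stationary policy $\bar\mu$ and an invariant probability measure $\bar p$ with $J(\bar\mu,\bar p)=\int c\,d\bar\eta\le\rho^*$; since $\rho^*$ is by definition a lower bound over all pairs, equality holds and $(\bar\mu,\bar p)$ is the desired minimum pair. The two points I would have to handle most carefully are verifying that $\bar\mu$ respects the control constraint, i.e.\ $\bar\mu(A(x)\mid x)=1$ for $\bar p$-a.e.\ $x$ (using the Borel structure of $\Gamma$ and the discreteness of $\A$), and ensuring that the Lusin/majorization estimates are genuinely uniform in $k$, which is exactly what makes the limit passage legitimate.
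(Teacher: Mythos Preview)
Your proposal is correct and follows essentially the same route as the paper: reduce via Proposition~\ref{prp-su-mp} to a minimizing sequence of stationary/invariant pairs, pass to the associated occupation measures $\bar\gamma_n$, use (SU) for tightness and Prokhorov for a weak limit, and then combine condition~(M) with Lusin's theorem to push both the cost lower bound and the invariance identity through the limit, exactly as in Lemmas~\ref{lem-su-mp}--\ref{lem-prf-su2}. The only point worth flagging is that your bound on the ``residual bad region'' relies implicitly on the invariance relation $\bar p_n(E)=\int q(E\mid\cdot)\,d\bar\gamma_n$ to transfer the majorization (\ref{eq-maj-minpair}) from $q$ to the marginals $\bar p_n$ (this is the content of Lemma~\ref{lem-su-mp}); make that step explicit.
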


\subsection{Proofs}
We prove Prop.~\ref{prp-su-mp} and Theorem~\ref{thm-su-mp} in this subsection. The major proof steps are the same as those for \cite[Theorem 5.7.9 and Lemma 5.7.10]{HL96}, but since the model here is not a lower semicontinuous model, the details in some steps are different. We will focus on those steps. 

Let $\C_b(\X)$ denote the set of bounded continuous functions on $\X$. Recall that a set of probability measures on a topological space is called \emph{tight} if for any $\epsilon > 0$, there exists a compact set whose measure is greater than $1 - \epsilon$ w.r.t.\ every probability measure in that set \cite[p.~293]{Dud02}.

Consider the pair $(\pi, p)$ in Prop.~\ref{prp-su-mp}. Let $\gamma_n \in \P(\X \times \A)$ be the marginal distribution of $(x_n, a_n)$ under the policy $\pi$ with initial distribution $p$. 
For $n \geq 1$, define $\bar \gamma_n \in  \P(\X \times \A)$ to be the average
$$ \textstyle{\bar \gamma_n : = \frac{1}{n} \sum_{k=1}^n \gamma_k.}$$
By assumption, the average cost $J(\pi, p) = \limsup_{n \to \infty} \int c \, d \bar \gamma_n  < \infty$.  
Together with Assumption~\ref{cond-pc-3}(SU), this implies that $\{\bar \gamma_n\}$ is tight. 
So by Prohorov's Theorem~\cite[Theorem 6.1]{Bil68}, there exists a subsequence $\{\bar \gamma_{n_k}\}_{k \geq 0}$ converging weakly to some $\bar \gamma \in \P(\X \times \A)$. 

By \cite[Cor.\ 7.27.2]{bs}, we can decompose $\bar \gamma$ into its marginal $\bar p$ on $\X$ and a Borel measurable stochastic kernel $\bar \mu(da \mid x)$ on $\A$ given $\X$, and by modifying $\bar \mu(da \mid x)$ at a set of $\bar p$-measure zero if necessary, we can make it obey the control constraint of the MDP: 
$$ \bar \gamma (d(x,a)) =  \bar \mu( da \mid x) \, \bar p(dx),  \qquad \text{and}  \qquad \bar \mu(A(x) \mid x) = 1,  \quad \forall \, x \in \X.$$
In order to prove Prop.~\ref{prp-su-mp}, we need to show the following:
\begin{itemize}
\item[(i)] $\bar p$ is an invariant probability measure of the Markov chain on $\X$ induced by the stationary policy $\bar \mu$, namely, 
$$ \bar p(B) = \int_{\X} \int_{\A} q(B \mid x, a) \, \bar \mu(da \mid x) \, \bar p(dx), \qquad \forall \, B \in \B(\X). $$
By \cite[Prop.\ 11.3.2]{Dud02}, this is equivalent to that for every $v \in \C_b(\X)$,
\begin{equation}\label{eq-prf-su2}
   \int_{\X \times \A} \int_{\X} v(y) \, q(dy \mid x, a) \, \bar \gamma (d(x,a)) = \int_{\X} v(x) \, \bar p(dx).
\end{equation}
\item[(ii)] If (i) is established, then $J(\bar \mu, \bar p) = \int c \, d \bar \gamma$ by the invariance property of $\bar p$, so to prove the desired relation $J(\pi, p) \geq J(\bar \mu, \bar p)$, we also need to show that 
\begin{equation} \label{eq-prf-su1}
 \limsup_{n \to \infty} \int c \, d \bar \gamma_n \geq \int c \, d \bar \gamma.
\end{equation}
\end{itemize}
To prove (i)-(ii), we will make use of the following implication of Assumption~\ref{cond-pc-3}(M). 

Let $\bar p_n$ denote the marginal of $\bar \gamma_n$ on $\X$. Recall that $\bar p$ is the marginal of $\bar \gamma$ on $\X$.

\begin{lem} \label{lem-su-mp}
Let the open set $O$, the closed set $D$, and the finite measure $\nu$ on $\B(\X)$ be as in Assumption~\ref{cond-pc-3}(M) for some $K \in \{K_n\}$. Then for all $B \in \B(\X)$,
$$ \bar p \big(  (O \setminus D) \cap B  \big) \leq \nu(B), \qquad  \bar p_n \big( (O \setminus D) \cap B \big) \leq \nu(B), \quad  \forall \, n \geq 1.$$
\end{lem}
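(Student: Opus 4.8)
The plan is to prove the two stated inequalities separately: the bound on each $\bar p_n$ by a direct computation using the one-step structure of the chain together with Assumption~\ref{cond-pc-3}(M), and the bound on $\bar p$ by passing to the weak limit through an open-set approximation.

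First I would write each $\bar p_n$ in terms of the transition kernel. Since $\bar\gamma_n=\frac1n\sum_{k=1}^n\gamma_k$ and $\gamma_k$ is the joint marginal of $(x_k,a_k)$, the $\X$-marginal of $\gamma_k$ is the law of $x_k$, which for $k\geq 1$ is obtained from the law of $(x_{k-1},a_{k-1})$ by one application of $q$: for $B\in\B(\X)$, $\Pr(x_k\in B)=\int_{\X\times\A}q(B\mid x,a)\,\gamma_{k-1}(d(x,a))$. Averaging gives
\[
\bar p_n\big((O\setminus D)\cap B\big)=\frac1n\sum_{k=1}^n\int_{\X\times\A}q\big((O\setminus D)\cap B\mid x,a\big)\,\gamma_{k-1}(d(x,a)).
\]
Because the policy $\pi$ obeys the control constraint, each $\gamma_{k-1}$ is concentrated on the (Borel) graph $\Gamma$, so the integrand is bounded using Assumption~\ref{cond-pc-3}(M): $q((O\setminus D)\cap B\mid x,a)\leq\nu(B)$ for all $(x,a)\in\Gamma$. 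Substituting yields $\bar p_n((O\setminus D)\cap B)\leq\nu(B)$ for every $n$ and every $B\in\B(\X)$, which is the second assertion of the lemma.

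For the bound on $\bar p$, the key observation is that $U:=O\setminus D=O\cap D^c$ is the intersection of two open sets, hence open. Since $\bar\gamma_{n_k}\to\bar\gamma$ weakly and the projection $\X\times\A\to\X$ is continuous, the marginals converge weakly, $\bar p_{n_k}\to\bar p$. For any \emph{open} set $B$ the set $U\cap B$ is open, so the portmanteau theorem \cite{Dud02} gives $\bar p(U\cap B)\leq\liminf_k\bar p_{n_k}(U\cap B)$, and combining with the bound already proved in the previous paragraph, $\bar p_{n_k}(U\cap B)\leq\nu(B)$, shows $\bar p(U\cap B)\leq\nu(B)$ for every open $B$.

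It remains to upgrade this from open $B$ to arbitrary Borel $B$, and this is the one step that requires care: for a general Borel $B$ the set $U\cap B$ is neither open nor a $\bar p$-continuity set, so the weak limit cannot be taken directly. I would handle it by outer regularity. The set function $\mu(\cdot):=\bar p(U\cap\,\cdot\,)$ and $\nu$ are finite Borel measures on the (metrizable) Borel space $\X$, and I have shown $\mu(G)\leq\nu(G)$ for all open $G$. For any Borel $B$ and any open $G\supseteq B$, monotonicity of $\mu$ gives $\mu(B)\leq\mu(G)\leq\nu(G)$; taking the infimum over such $G$ and invoking outer regularity of the finite measure $\nu$ \cite{Dud02} yields $\mu(B)\leq\inf_{G\supseteq B}\nu(G)=\nu(B)$. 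This delivers $\bar p((O\setminus D)\cap B)\leq\nu(B)$ for all $B\in\B(\X)$, completing the proof. I expect this passage to arbitrary Borel sets to be the main (though minor) obstacle; everything else is a direct substitution into (M) plus standard weak-convergence facts.
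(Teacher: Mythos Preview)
Your proof is correct and follows essentially the same route as the paper: bound $\bar p_n$ by writing the $\X$-marginal of $\gamma_k$ as a one-step push-forward through $q$ and applying Assumption~\ref{cond-pc-3}(M), then pass to $\bar p$ via the portmanteau inequality on open sets (using that $O\setminus D$ is open) and extend from open $B$ to Borel $B$ by outer regularity of the finite measure $\nu$. The only cosmetic differences are that the paper applies portmanteau to $\bar\gamma_{n_k}$ on sets of the form $\{(O\setminus D)\cap B\}\times\A$ rather than first passing to marginals, and it states regularity for both $\bar p'(\cdot)=\bar p((O\setminus D)\cap\cdot)$ and $\nu$, though, as you observed, only outer regularity of $\nu$ together with monotonicity of $\bar p'$ is actually needed.
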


\begin{proof}
For $n \geq 1$, consider the marginal distribution $\gamma_n$ of $(x_n, a_n)$. For any $E \in \B(\X)$, $\gamma_n(E \times \A) = \int q(E \mid x, a) \, \gamma_{n-1}(d(x,a))$, so by Assumption~\ref{cond-pc-3}(M), $\gamma_n( \{(O \setminus D) \cap B \}  \times \A) \leq \nu(B)$ for any $B \in \B(\X)$. Since $\bar p_n(\cdot) =  \bar \gamma_n(\cdot \times \A) = \tfrac{1}{n} \sum_{k=1}^n \gamma_k( \cdot \times \A)$, the desired inequality for $\bar p_n$ follows.

We now prove the first inequality for $\bar p$. As the set $O \setminus D$ is open, for any open set $B \subset \X$, the set $(O \setminus D)  \cap B$ is also open. Since $\{\bar \gamma_{n_k} \}$ converges weakly to $\bar \gamma$, by \cite[Theorem 11.1.1]{Dud02}, for any open set $E$, $\bar \gamma (E) \leq \liminf_{k \to \infty} \bar \gamma_{n_k}(E)$. Then, letting $E = \{ (O \setminus D)  \cap B \} \times \A$ for any open set $B$, we have $\bar p((O \setminus D)  \cap B) \leq  \liminf_{k \to \infty} \bar p_{n_k}((O \setminus D)  \cap B) \leq \nu(B)$. This inequality must also hold for any $B \in \B(\X)$. To see this, first, define $\bar p'(\cdot) = \bar p( (O \setminus D)  \cap \cdot)$, to simplify notation. By~\cite[Theorem~7.1.3]{Dud02}, on a metric space, finite Borel measures are closed regular, which means, in our case, that for any Borel set $B$, $\bar p'(B) = \sup\{ \bar p'(F) \mid F \subset B, F \ \text{closed} \}$ and the same is true for $\nu(B)$. This in turn implies that $\bar p'(B) = \inf\{ \bar p'(F) \mid F \supset B, F \ \text{open} \}$ and the same for $\nu(B)$. Now given $B \in \B(\X)$, for any open set $F \supset B$, we have $\bar p'(F) \leq \nu(F)$ as proved earlier, and therefore $\bar p'(B) \leq \nu(B)$.
\end{proof}

We now proceed to prove (\ref{eq-prf-su1}) and then (\ref{eq-prf-su2}).

\begin{lem} \label{lem-prf-su1}
The inequality (\ref{eq-prf-su1}) holds.
\end{lem}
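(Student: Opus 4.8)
The statement is the lower semicontinuity of the cost integral along the weakly convergent subsequence, and the only real obstruction is that $c$ fails to be globally lower semicontinuous. My plan is to reduce (\ref{eq-prf-su1}) to the single inequality $\int c\,d\bar\gamma \le \liminf_{k\to\infty}\int c\,d\bar\gamma_{n_k}$, which suffices because $\limsup_{n\to\infty}\int c\,d\bar\gamma_n \ge \liminf_{k\to\infty}\int c\,d\bar\gamma_{n_k}$, and then to prove this inequality by approximating $c$ from below by bounded continuous functions, exploiting the continuity region $D$ together with the majorization furnished by Lemma~\ref{lem-su-mp}. Concretely, it suffices to produce, for every $\epsilon>0$, a function $g\in\C_b(\X\times\A)$ with $0\le g\le c$ whose integral $\int g\,d\bar\gamma$ is arbitrarily close to $\int c\,d\bar\gamma$ (or arbitrarily large, if the latter is infinite); for then the portmanteau theorem \cite[Theorem 11.1.1]{Dud02} gives $\int g\,d\bar\gamma=\lim_{k\to\infty}\int g\,d\bar\gamma_{n_k}\le\liminf_{k\to\infty}\int c\,d\bar\gamma_{n_k}$, and letting $g$ exhaust $c$ finishes the argument.

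To build such a $g$ I would first truncate, replacing $c$ by $c\wedge N$; this only lowers every integral and $\int(c\wedge N)\,d\bar\gamma\uparrow\int c\,d\bar\gamma$, so it is harmless, and it makes the integrand bounded and finite-valued as Lusin's theorem requires. Next I would use Assumption~\ref{cond-pc-3}(SU) to localize: since $\sup_k\int c\,d\bar\gamma_{n_k}<\infty$ and the cost grows without bound off the compact sets $K_m\times A_m$, the family $\{\bar\gamma_{n_k}\}$ (hence its weak limit $\bar\gamma$) carries uniformly negligible mass on $\{c\ge\delta_m\}$, and in particular on $K_m^c\times\A$, for $m$ large. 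Here I would crucially use that $\A$ is discrete, so each compact $A_m$ is finite, reducing the construction to finitely many state functions $c(\cdot,a)$, $a\in A_m$.

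For each fixed $a\in A_m$ I would split $K_m=(K_m\cap D)\cup(K_m\setminus D)$. On the closed set $K_m\cap D$ the function $c(\cdot,a)$ is lower semicontinuous by the hypothesis in (M); on $K_m\setminus D\subset O\setminus D$ I would apply Lusin's theorem with respect to the majorizing measure $\nu$ to obtain a closed set $C_a$ with $\nu(\X\setminus C_a)$ as small as desired on which $c(\cdot,a)$ is continuous. The point of Lemma~\ref{lem-su-mp} is that $\nu$ dominates both $\bar p$ and every $\bar p_n$ on $O\setminus D$, so the Lusin-exceptional set has uniformly small measure under all of $\bar p_{n_k}$ and $\bar p$ at once---the uniformity that weak convergence alone cannot provide. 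Since $c(\cdot,a)$ is then lower semicontinuous on the closed set $S_a=(K_m\cap D)\cup C_a$ (a function that is lsc on each of two closed sets is lsc on their union), I would approximate it from below on $S_a$ by a bounded continuous function and, using Urysohn bumps to damp the approximation to values below $c$ on a neighbourhood of the small exceptional set and outside $K_m$, extend it to a continuous $g_a\le c(\cdot,a)$ on all of $\X$. Setting $g(x,a)=g_a(x)$ for $a\in A_m$ and $g\equiv 0$ otherwise yields the desired $g\in\C_b(\X\times\A)$, since discreteness of $\A$ makes slicewise continuity equivalent to joint continuity and $c\ge 0$ guarantees $g\le c$ on the dropped actions.

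The main obstacle I anticipate is precisely this construction of the global continuous minorant: I must keep $g_a$ below $c(\cdot,a)$ everywhere---including where $c$ drops sharply just off $S_a$---while surrendering only $\epsilon$ of the integral. This is where the two structural hypotheses pay off in tandem: the continuity region $D$ lets $c$ itself serve as an lsc minorant there, (M) together with Lemma~\ref{lem-su-mp} confines the genuinely discontinuous behaviour to a set of arbitrarily small $\nu$-measure that is uniformly controllable for all $\bar\gamma_{n_k}$, and (SU) confines the remaining uncontrolled region $D^c\setminus O$ to $\{c\ge\delta_m\}$, whose mass is negligible. Bounding $\int(c-g)\,d\bar\gamma$ by summing the contributions of these finitely many small-measure regions (each weighted by the truncation level $N$) and then letting $N\to\infty$ should complete the proof.
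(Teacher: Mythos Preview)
Your overall strategy---truncate, localize to $K_m\times A_m$ via (SU), exploit the lower semicontinuity of $c$ on $D\times\A$, apply Lusin's theorem with respect to $\nu$ on the complement, and use Lemma~\ref{lem-su-mp} to control the Lusin-exceptional set uniformly for all $\bar\gamma_{n_k}$ and for $\bar\gamma$---is exactly the strategy the paper uses. The ingredients and their roles are identified correctly.

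There is, however, a genuine gap in the final construction: insisting on a \emph{pointwise} continuous minorant $g\le c$ cannot work in general. The Lusin-exceptional set $\X\setminus C_a$, while of small $\nu$-measure, is open and can be \emph{dense} in $\X$. (For instance, if $c(\cdot,a)=\ind_{\text{irrationals}}$ on $[0,1]$ with $\nu$ Lebesgue, the Lusin set $C_a$ is a nowhere-dense closed set of large measure and its open complement contains all rationals.) Any ``neighbourhood of the exceptional set'' on which you damp $g_a$ to zero is then all of $\X$, forcing $g_a\equiv 0$; indeed, since $c$ can vanish on a dense set, \emph{every} continuous (or even lower semicontinuous) global minorant of $c(\cdot,a)$ is $\le 0$, so no such $g$ can approximate $\int c\,d\bar\gamma$.

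The fix is the one the paper adopts: do not require $\tilde c^m\le c^m$. Instead, extend the restriction of $c^m$ to the closed set $(D\cup B)\times F$ (where $B=\bigcap_a C_a$) to a bounded lower semicontinuous function $\tilde c^m$ on $\X\times\A$ via Tietze--Urysohn, apply the portmanteau inequality to $\tilde c^m$, and then bound the \emph{two-sided} difference
\[
\Big|\int (c^m-\tilde c^m)\,d\bar\gamma_{n_k}\Big|\ \le\ m\cdot\bar\gamma_{n_k}\big(((D\cup B)\times F)^c\big)\ \le\ m\,(\delta+\epsilon),
\]
and likewise for $\bar\gamma$, using Lemma~\ref{lem-su-mp} and the tightness bound from (SU). This is precisely where the uniform majorization matters, and it replaces your unattainable pointwise inequality by a uniform integral one.
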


\begin{proof}
For $m \geq 0$, define $c^{m} : \X \times \A \to \R$ by $c^m(x, a) = \min\{ c(x,a), m\}$. 
Since $\liminf_{k \to \infty} \int c \, d \bar \gamma_{n_k} \, \geq \,   \liminf_{k \to \infty} \int c^m \, d \bar \gamma_{n_k}$ and
$\int c^m \, d \bar \gamma \uparrow \int c \, d \bar \gamma$ as $m \to \infty$ by the monotone convergence theorem, 
to prove (\ref{eq-prf-su1}), it suffices to prove
\begin{equation} \label{eq-prf-su3}
 \liminf_{k \to \infty} \int c^m \, d \bar \gamma_{n_k} \geq \int  c^m \, d \bar \gamma.
\end{equation}

To compare the integrals in (\ref{eq-prf-su3}), consider an arbitrary $\epsilon > 0$. There exists a sufficiently large $\bar n$ such that for the corresponding compact sets $K : = K_{\bar n}$ and $F : = A_{\bar n}$ in Assumption~\ref{cond-pc-3}(SU), 
the complement set $(K \times F)^c$ satisfies that
\begin{equation} \label{eq-prf-su4}
 \bar \gamma_{n}\big( (K \times F)^c \big)   \leq \epsilon, \quad \forall \, n \geq 0, \qquad \text{and} \qquad \bar \gamma\big( (K \times F)^c \big)  \leq \epsilon.
\end{equation}
In the above, the existence of such $\bar n$ and the first inequality of (\ref{eq-prf-su4}) follow from Assumption~\ref{cond-pc-3}(SU) and the fact $\limsup_{n \to \infty} \int c \, d \bar \gamma_n < \infty$. The second inequality of (\ref{eq-prf-su4}) follows from the fact that $(K \times F)^c$ is an open set and hence, as the weak limit of $\{\bar \gamma_{n_k}\}$, $\bar \gamma$ satisfies that $\bar \gamma( (K \times F)^c) \leq \liminf_{k \to \infty}  \bar \gamma_{n_k}\big( (K \times F)^c \big)$ by \cite[Theorem 11.1.1]{Dud02}.
We use (\ref{eq-prf-su4}) to bound the integrals $\int_{(K \times F)^c} c^m \, d \bar \gamma_{n_k}$ and $\int_{(K \times F)^c}  c^m \, d \bar \gamma$ by $m \, \epsilon$.

We now compare the integrals of $c^m$ on $K \times F$.
By Assumption~\ref{cond-pc-3}(M), $c$ is lower semicontinuous on the closed set $D \times \A$; therefore, so is $c^m$ on $D \times \A$.
Since the set $F$ is finite, applying Lusin's Theorem~\cite[Theorem 7.5.2]{Dud02}, for any $\delta > 0$, we can choose a closed set $B \subset \X$ with $\nu(\X \setminus B) \leq \delta$ such that $c^m$ is continuous on $B \times F$.
\footnote{Details: We apply Lusin's Theorem for each $a \in F$ to obtain a closed set $B_a \subset \X$ such that $\nu(\X \setminus B_a) \leq \delta/|F|$ and $c^m(\cdot, a)$ is continuous on $B_a$. We then take $B = \cap_{a \in F} B_a$.\label{footnote-lysin}}
Then $c^m$ is lower semicontinuous on the closed set $(D \cup B) \times F$. 
By the Tietze-Urysohn extension theorem~\cite[Theorem 2.6.4]{Dud02} and an approximation property for lower semicontinuous functions \cite[Lemma 7.14]{bs}, the restriction of $c^m$ to $(D \cup B) \times F$ can be extended to a nonnegative lower semicontinuous function $\tilde c^m$ on $\X \times \A$ with the extension also bounded above by $m$.
\footnote{Details: Denote the restriction of $c^m$ to $(D \cup B) \times F$ by $f$. Since it is lower semicontinuous and bounded above by $m$, by \cite[Lemma 7.14]{bs}, there exists a sequence of nonnegative continuous functions $\{f_n\}$ on $(D \cup B) \times F$ with $f_n \uparrow f$. 
We apply the Tietze-Urysohn extension theorem to extend each $f_n$ to a nonnegative continuous function $\tilde f_n$ on $\X \times \A$ that is also bounded above by $m$. We then let $\tilde c^m = \sup_{n} \tilde f_n$.}
For the function $\tilde c^m$, since $\{\bar \gamma_{n_k}\}$ converges weakly to $\bar \gamma$, by \cite[Prop.\ E.2]{HL96},
\begin{equation} \label{eq-prf-su5}
\liminf_{k \to \infty} \int \tilde c^m d \bar \gamma_{n_k} \geq \int \tilde c^m d \bar \gamma.
\end{equation}

We now compare the integrals of $c^m$ with those of $\tilde c^m$ and bound their differences:
\begin{align}
  \left| \, \int_{\X \times \A}  \big(c^m - \tilde c^m\big)  \, d \bar \gamma_{n_k}  \,  \right| & \, = \,  \left| \, \int_{\big((D \cup B) \times F\big)^c}  \big(c^m - \tilde c^m\big) \, d \bar \gamma_{n_k}  \,  \right|  \notag \\
  & \, \leq \, \int_{\big(K \setminus (D \cup B) \big) \times F}  \big| c^m - \tilde c^m \big| \, d \bar \gamma_{n_k}    + \int_{(K \times F)^c}  \big| c^m - \tilde c^m \big| \, d \bar \gamma_{n_k} \notag \\
  & \, \leq \, m \int_{\big(O \setminus (D \cup B) \big) \times \A} \, d \bar \gamma_{n_k} + m \,\epsilon \label{eq-prf-su6a}\\
  & \, \leq \,  m \, \nu(B^c) + m \, \epsilon \label{eq-prf-su6b} \\
  &  \, \leq \, m \, (\delta + \epsilon), \label{eq-prf-su6c}
\end{align}
where the set $O$ in (\ref{eq-prf-su6a}) is the set $O \supset K$ appearing in Assumption~\ref{cond-pc-3}(M), and we used (\ref{eq-prf-su4}) to derive this inequality, and we used Lemma~\ref{lem-su-mp} and the fact $\nu(\X \setminus B) \leq \delta$ to derive (\ref{eq-prf-su6b}) and (\ref{eq-prf-su6c}), respectively. By the same arguments,
\begin{equation}\label{eq-prf-su6d}
 \left| \, \int_{\X \times \A}  \big(c^m - \tilde c^m\big)  \, d \bar \gamma  \,  \right|  \, \leq \,  m \, (\delta + \epsilon).
\end{equation}
By combining (\ref{eq-prf-su6c})-(\ref{eq-prf-su6d}) with (\ref{eq-prf-su5}), we have
$$ \liminf_{k \to \infty} \int c^m \, d \bar \gamma_{n_k} \geq \int c^m \, d \bar \gamma - 2m \, ( \delta + \epsilon).$$
Since $\delta$ and $\epsilon$ are both arbitrary, we obtain
$\liminf_{k \to \infty} \int c^m \, d \bar \gamma_{n_k} \geq \int c^m \, d \bar \gamma$, which is (\ref{eq-prf-su3}) and implies the desired inequality (\ref{eq-prf-su1}) as discussed earlier.
\end{proof}

\begin{lem} \label{lem-prf-su2}
The equality (\ref{eq-prf-su2}) holds.
\end{lem}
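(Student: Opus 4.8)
The plan is to rewrite the left-hand side of (\ref{eq-prf-su2}) as $\int_{\X\times\A}\psi\,d\bar\gamma$, where $\psi(x,a):=\int_\X v(y)\,q(dy\mid x,a)$ is bounded by $\|v\|_\infty$ (and is Borel measurable, since $q$ is a Borel kernel and $v$ is bounded), and to pass the weak limit $\bar\gamma_{n_k}\to\bar\gamma$ through this integral. First I would record the ``approximate invariance'' identity coming from the Markov structure: for each $k\ge 1$ the $\X$-marginal of $\gamma_{k+1}$ is the image of $\gamma_k$ under $q$, so integrating $v$ and averaging the resulting telescoping sum gives
\[
\int_{\X\times\A}\psi\,d\bar\gamma_n=\int_\X v\,d\bar p_n+\tfrac1n\Big(\textstyle\int_\X v\,d\gamma_{n+1}(\cdot\times\A)-\int_\X v\,d\gamma_1(\cdot\times\A)\Big),
\]
whose last term is $O(1/n)$ because $v$ is bounded. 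Since $\bar\gamma_{n_k}\to\bar\gamma$ weakly, the marginals $\bar p_{n_k}\to\bar p$ weakly, so $\int_\X v\,d\bar p_{n_k}\to\int_\X v\,d\bar p$, which is exactly the right-hand side of (\ref{eq-prf-su2}). Everything therefore reduces to showing $\int\psi\,d\bar\gamma_{n_k}\to\int\psi\,d\bar\gamma$.

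This convergence is the crux, and the discontinuity of $q$ is the main obstacle: were $q$ weak Feller everywhere, $\psi$ would lie in $\C_b(\X\times\A)$ and the convergence would be immediate. Instead I would replay the Lusin--Tietze--majorization argument of Lemma~\ref{lem-prf-su1}, now applied to the bounded function $\psi$. The key gain over the cost case is that on $D\times\A$ the kernel $q$ is continuous, so $\psi$ is genuinely \emph{continuous} there, not merely lower semicontinuous. Fixing $\epsilon>0$, I would use Assumption~\ref{cond-pc-3}(SU) and the tightness of $\{\bar\gamma_n\}$ to find compact $K=K_{\bar n}$ and finite $F=A_{\bar n}$ with $\bar\gamma_n((K\times F)^c)\le\epsilon$ for all $n$ and $\bar\gamma((K\times F)^c)\le\epsilon$; then, applying Lusin's theorem \cite[Theorem~7.5.2]{Dud02} to each of the finitely many functions $\psi(\cdot,a)$, $a\in F$, w.r.t.\ the majorizing measure $\nu$ of Assumption~\ref{cond-pc-3}(M), I would obtain a closed set $B\subset\X$ with $\nu(\X\setminus B)\le\delta$ on which each $\psi(\cdot,a)$ is continuous. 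By the pasting lemma for closed sets, $\psi$ is continuous on the closed set $(D\cup B)\times F$, and the Tietze--Urysohn theorem \cite[Theorem~2.6.4]{Dud02} yields an extension $\tilde\psi\in\C_b(\X\times\A)$ with $\|\tilde\psi\|_\infty\le\|v\|_\infty$ that agrees with $\psi$ on $(D\cup B)\times F$.

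It then remains to control $\int(\psi-\tilde\psi)\,d\bar\gamma_{n_k}$ and $\int(\psi-\tilde\psi)\,d\bar\gamma$. The integrand vanishes on $(D\cup B)\times F$; on $(K\times F)^c$ it is bounded by $2\|v\|_\infty$ against mass $\le\epsilon$, while on $(K\setminus(D\cup B))\times F\subset((O\setminus D)\cap B^c)\times\A$ Lemma~\ref{lem-su-mp} bounds the relevant mass by $\nu(\X\setminus B)\le\delta$. Hence both differences are at most $2\|v\|_\infty(\delta+\epsilon)$, and combining with $\int\tilde\psi\,d\bar\gamma_{n_k}\to\int\tilde\psi\,d\bar\gamma$ (weak convergence for the bounded continuous $\tilde\psi$) gives $\limsup_k\big|\int\psi\,d\bar\gamma_{n_k}-\int\psi\,d\bar\gamma\big|\le 4\|v\|_\infty(\delta+\epsilon)$. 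Since $\delta$ and $\epsilon$ are arbitrary, $\int\psi\,d\bar\gamma_{n_k}\to\int\psi\,d\bar\gamma$, and together with the first paragraph this establishes (\ref{eq-prf-su2}). I expect the only delicate points to be the closed-pasting step that upgrades $\psi$ to a continuous function on $(D\cup B)\times F$ and ensuring the Tietze extension respects the sup-norm bound; both are routine once the continuity of $\psi$ on $D\times\A$ is in hand.
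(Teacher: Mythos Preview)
Your proposal is correct and follows essentially the same route as the paper's proof: reduce to showing $\int\psi\,d\bar\gamma_{n_k}\to\int\psi\,d\bar\gamma$, then use Lusin's theorem with respect to the majorizing measure $\nu$ to produce a closed set $B$ on which $\psi$ is continuous, apply Tietze--Urysohn to extend $\psi$ from $(D\cup B)\times F$ to a bounded continuous function, and control the error via Lemma~\ref{lem-su-mp} and the tightness bound. The only cosmetic differences are that the paper cites a martingale argument from \cite[p.~119]{HL96} where you give the elementary telescoping identity, and the paper applies Lusin to the $\P(\X)$-valued map $x\mapsto q(\cdot\mid x,a)$ (invoking separability of $\P(\X)$) rather than directly to the scalar functions $\psi(\cdot,a)$ as you do; both variants work equally well here.
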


\begin{proof}
Recall that $\bar p_{n}$ and $\bar p$ are the marginals of $\bar \gamma_{n}$ and $\bar \gamma$, respectively, on $\X$.
For any $v \in \C_b(\X)$, since $\{\bar \gamma_{n_k}\}$ converges weakly to $\bar \gamma$, it is clear that the r.h.s.\ of (\ref{eq-prf-su2}) satisfies
$$ \int v \, d \bar p = \lim_{k \to \infty} \int v \, d \bar p_{n_k}.$$
The same proof given in \cite[p.\ 119]{HL96} (which is based on a martingale argument) establishes that
$$ \lim_{n \to \infty} \left\{ \int_{\X \times \A} \int_{\X} v(y) \, q(dy \mid x, a) \, \bar \gamma_{n} (d(x,a)) -  \int_{\X} v(x) \, \bar p_{n}(dx)  \right\} \, =  \, 0. $$
Therefore, to prove (\ref{eq-prf-su2}), it suffices to show that for any $v \in \C_b(\X)$,
\begin{equation} \label{eq-prf-su6}
\lim_{k \to \infty} \int_{\X \times \A} \int_{\X} v(y) \, q(dy \mid x, a) \, \bar \gamma_{n_k} (d(x,a)) = \int_{\X \times \A} \int_{\X} v(y) \, q(dy \mid x, a) \, \bar \gamma (d(x,a)).
\end{equation}

Let $\epsilon > 0$ and let the sets $K \subset O$, $F$ and $D$, and the finite measure $\nu$ be as in the proof of Lemma~\ref{lem-prf-su1}. 
Recall that by Assumption~\ref{cond-pc-3}(M), $q(dy \mid x, a)$ is continuous on $D \times \A$.
Since the space $\P(\X)$ is separable and metrizable \cite[Prop.\ 7.20]{bs} and the set $F$ is finite, applying Lusin's Theorem~\cite[Theorem 7.5.2]{Dud02} (cf.~Footnote~\ref{footnote-lysin}), for any $\delta > 0$, we can choose a closed set $B \subset \X$ with $\nu(\X \setminus B) \leq \delta$ such that $q(dy \mid x, a)$ is continuous on $B \times F$.
Then $q(dy \mid x, a)$ is continuous on the closed set $(D \cup B) \times F$, so by \cite[Prop.\ 7.30]{bs}, $\phi(x,a) : = \int_\X v(y) \, q(dy \mid x, a)$ is a bounded continuous function on the closed set $(D \cup B) \times F$, and by the Tietze-Urysohn extension theorem~\cite[Theorem 2.6.4]{Dud02}, this restriction of $\phi$ to $(D \cup B) \times F$ can be extended to a continuous function $\tilde \phi$ on $\X \times \A$ with $\|\tilde \phi\|_\infty \leq \| \phi\|_\infty \leq \| v\|_\infty$.

Since the function $\tilde \phi$ is bounded and continuous and $\{\bar \gamma_{n_k}\}$ converges weakly to $\bar \gamma$, we have
\begin{equation} \label{eq-prf-su7}
\lim_{k \to \infty} \int \tilde \phi  \, d \bar \gamma_{n_k} = \int \tilde \phi  \, d \bar \gamma .
\end{equation}
We now compare the integrals of $\phi$ with those of $\tilde \phi$ and bound their differences, similarly to the derivation of (\ref{eq-prf-su6c})-(\ref{eq-prf-su6d}):
\begin{align*}
 \left| \int_{\X \times \A} \big( \phi - \tilde \phi \big) \, d \bar \gamma_{n_k}  \right| & \, = \,  \left| \int_{ \big( (D \cup B) \times F \big)^c} \big( \phi - \tilde \phi \big) \, d \bar \gamma_{n_k} \right| \\
 & \, \leq \, \int_{\big(K \setminus (D \cup B) \big) \times F}  \big| \phi - \tilde \phi \big| \, d \bar \gamma_{n_k}    + \int_{(K \times F)^c}  \big| \phi - \tilde \phi \big| \, d \bar \gamma_{n_k} \\
 & \, \leq \, 2 \| v\|_\infty \cdot \int_{ \big(O \setminus (D \cup B)\big) \times \A}  d \bar \gamma_{n_k}  + 2 \| v\|_\infty \cdot \epsilon \\
  & \, \leq \, 2 \| v\|_\infty \cdot \nu(B^c) + 2 \| v\|_\infty \cdot \epsilon  \\
  & \, \leq \, 2 \| v\|_\infty \cdot (\delta + \epsilon),
\end{align*}
where we used (\ref{eq-prf-su4}), Lemma~\ref{lem-su-mp}, and the fact $\nu(\X \setminus B) \leq \delta$ by the choice of $B$, to derive the last three inequalities, respectively. By the same arguments, the same conclusion holds for the integrals w.r.t.\ $\bar \gamma$:
$$  \left| \int_{\X \times \A} \big( \phi - \tilde \phi \big) \, d \bar \gamma \, \right| \, \leq \, 2 \| v\|_\infty \cdot (\delta + \epsilon).$$
Combining these two relations with (\ref{eq-prf-su7}), we have
$$ \limsup_{k \to \infty} \left| \int \phi  \, d \bar\gamma_{n_k}  - \int  \phi \, d \bar \gamma  \, \right|  \, \leq \, 4 \| v \|_\infty \cdot (\delta + \epsilon ). $$
Since $\delta$ and $\epsilon$ are arbitrary,  we obtain the desired inequality (\ref{eq-prf-su6}), which implies (\ref{eq-prf-su2}), as discussed earlier.
\end{proof}

\begin{proof}[Proof of Prop.~\ref{prp-su-mp}]
The proposition follows from Lemmas~\ref{lem-prf-su1}-\ref{lem-prf-su2} and the discussion given immediately before Lemma~\ref{lem-su-mp}.
\end{proof}

\begin{proof}[Proof of Theorem~\ref{thm-su-mp}]
By Prop.~\ref{thm-su-mp}, we can construct a sequence of pairs $(\bar \mu_n, \bar p_n)$, $n \geq 1$, such that $\bar \mu_n$ is a stationary policy, $\bar p_n$ an invariant probability measure of the Markov chain on $\X$ induced by $\bar \mu_n$, and $\lim_{n \to \infty} J(\bar \mu_n, \bar p_n) = \inf_{p \in \P(\X)} \inf_{\pi \in \Pi} J(\pi, p) < \infty$. 
Let $\bar \gamma_n(d(x,a)) = \bar \mu_n(da \mid x) \, \bar p_n(dx)$. By the invariance property of $\bar p_n$, we have $J(\bar \mu_n, \bar p_n) = \int c \, d \bar \gamma_n$, so $\int c \, d \bar \gamma_n$ is bounded by some constant for all $n \geq 1$. In view of Assumption~\ref{cond-pc-3}(SU), this implies, as in the preceding proofs, that $\{\bar \gamma_n\}$ is tight and hence there is a subsequence $\{\bar \gamma_{n_k}\}$ converging weakly to some $\bar \gamma \in \P(\X \times \A)$. The rest of the proof now parallels that of Prop.~\ref{prp-su-mp}.
Decompose $\bar \gamma$ into the marginal $\bar p$ on $\X$ and a Borel measurable stochastic kernel $\bar \mu(da \mid x)$ on $\A$ given $X$ that obeys the control constraint. To prove the theorem, we need to show that (\ref{eq-prf-su2}) and (\ref{eq-prf-su1}) hold for $\{\bar \gamma_n\}$ and $\bar \gamma$ in this case.

For all $n \geq 1$, we have $\bar p_n(E)  = \int q(E \mid x, a) \, \bar \gamma_n(d(x,a))$ for all $E \in \B(\X)$, by the invariance property of $\bar p_n$. It follows from this relation and Assumption~\ref{cond-pc-3}(M) that the conclusion of Lemma~\ref{lem-su-mp} holds for $\bar p_n$, and then the second half of the proof of that lemma shows that its conclusion also holds for $\bar p$ in this case. 
We then use Lemma~\ref{lem-su-mp} to prove that for any $v \in \C_b(\X)$, (\ref{eq-prf-su2}) holds. Clearly, the proof amounts to showing that (\ref{eq-prf-su6}) holds, and the arguments are the same as those given in the proof of Lemma~\ref{lem-prf-su2}. This establishes that $\bar p$ is an invariant probability measure of the Markov chain on $\X$ under the stationary policy $\bar \mu$.
Finally, the proof for (\ref{eq-prf-su1}) in this case is exactly the same as the proof of Lemma~\ref{lem-prf-su1}, and this establishes that $\int c \, d\bar \gamma = \lim_{k \to \infty} \int c \, d \bar \gamma_{n_k} = \inf_{p \in \P(\X)} \inf_{\pi \in \Pi} J(\pi, p)$.
Hence $(\bar \mu, \bar p)$ is the desired minimum pair.
\end{proof}

\subsection{An Illustrative Example} \label{sec-minpair-ex}

For simplicity, we consider a problem similar to a one-dimensional linear-quadratic (LQ) control problem but with a discretized action space and piecewise quadratic costs. The same reasoning can be applied to higher dimensional problems with nonlinear dynamics and additive noise. Let $\X = \R$, $\A \subset \R$, and
$c(x,a) = \beta(x) \, ( x^2 +  a^2)$, where $\beta(\cdot)$ is some piecewise constant function such that $\lim\inf_{|x| \to \infty} \beta(x) > 0$, $\sup_{x \in \R} \beta(x) < \infty$.
Let 
$$x_{n+1} = x_{n} + a_n + \zeta_n(x_n, a_n), \quad n \geq 0,$$
where $\zeta_n(x_n, a_n)$ is a random disturbance whose distribution, given the history $\{(x_k, a_k)\}_{k \leq n}$, 
is assumed to depend only on $(x_n, a_n)$ and is given by $F_{x,a} \in \P(\R)$ for $(x_n, a_n)=(x,a)$.
Since our results only apply to discrete action spaces, as an illustrative example, consider a small number $\delta > 0$ and let 
$\A = \big\{ k \delta \mid k = 0, \, \pm 1, \, \pm 2, \, \ldots \big\}.$
The one-stage costs in this problem are strictly unbounded, and we can let the compact subsets of states and actions in Assumption~\ref{cond-pc-3}(SU) be $K_n = [-n, n]$ and $A_n = \big\{ 0, \, \pm \delta, \,  \pm 2 \delta, \, \ldots, \, \pm n \delta \big\}$  for $n \geq 1$, for example.

Now similar to what we discussed in Section~\ref{sec-uc-cond}, suppose that w.r.t.\ the Lebesgue measure, the distributions $F_{x,a}$ have densities $f_{x,a}$ that are bounded uniformly from above by $\ell$.
For a closed interval $K = [-n, n]$, consider the open interval $O = (-n-1, n+1)\supset K$. 
A finite measure $\nu$ satisfying Assumption~\ref{cond-pc-3}(M) is simply given by $\ell$ times the Lebesgue measure on the open interval~$O$. (We let $D = \emptyset$ in Assumption~\ref{cond-pc-3}(M) in this case.)

Regarding Assumption~\ref{cond-pc-3}(G), suppose that $F_{x,a}$, $(x, a) \in \Gamma$, have zero means and variances bounded uniformly by $\sigma^2$. Let $A(x) = \A$ for all $x \in \X$, for simplicity. Then a policy $\pi$ that satisfies Assumption~\ref{cond-pc-3}(G) for the initial state $x_0=0$ is the one that chooses the action $a_n = \argmin_{a \in \A, |a| \leq |x_n|} | x_n + a|$, since $J(\pi, 0) \leq 2  (\delta^2 + \sigma^2) \cdot \sup_{x \in \R} \beta(x) < \infty$, as can be verified. In this simple example, we can see immediately a solution to the condition (G). For more complicated problems, one can use Markov chain theory in finding a stationary policy $\pi$ with a finite average cost; see \cite[Sec.~IV.A]{Mey97}.

Thus, Theorem~\ref{thm-su-mp} holds in this example. Like in the previous Section~\ref{sec-3}, there is no need here for the continuity of $q(dy \mid x, a)$ or $c(x,a)$ in $x$.

\section*{Acknowledgment}
The author would like to thank Professor Eugene Feinberg, who gave helpful comments on a preliminary draft of this manuscript and pointed her to several related prior results on Borel-space MDPs, and Dr.~Martha Steenstrup, whose suggestions helped her improve the presentation.
This research was supported by a grant from Alberta Innovates---Technology Futures.


\markboth{On Average-Cost Borel-Space MDPs}{}

\begin{appendices}

\section{Some Basic Optimality Properties for the Average and Discounted Cost Criteria} \label{appsec-opt}

In this appendix, we first define a deterministic control model (DM) that corresponds to a given Borel-space MDP, which will be referred to below as (SM) for short (this is the term used in \cite{bs}, standing for the stochastic control model). This background material, given in Section~\ref{appsec-dm}, is based on the book~\cite[Chap.\ 9.2]{bs}, and it is an important part of the theory for Borel-space MDPs with universally measurable policies, as we explained in Section~\ref{sec-2.2}. The text of Section~\ref{appsec-dm} is similar to an online appendix the author wrote earlier for a paper on total cost Borel-space MDPs~\cite{yu-tc15}. 

The rest of this appendix (Sections~\ref{sec-prf-thmacbasic}-\ref{appsec-lem}) contains the proof details for the results given in Section~\ref{sec-2.2}, as well as a technical lemma used in proving Lemma~\ref{lem-h-lsa-pc} in Section~\ref{sec-pc-acoi}. 
These proofs make use of the correspondence relations between (DM) and (SM) explained in Section~\ref{appsec-dm}, in addition to several theorems from \cite[Part II]{bs}. That is why we have collected them together in this appendix.

\subsection{Correspondence between a Borel-space MDP and a Deterministic Control Model} \label{appsec-dm}

Recall that $\P(\X)$ and $\P(\X \times \A)$ denote the sets of Borel probability measures on the state space $\X$ and the state-action space $\X \times \A$, respectively. The set $\Gamma = \{ (x,a) \mid x \in \X, a \in A(x) \}$ is the graph of the control constraint $A(\cdot)$ of the Borel-space MDP defined in Section~\ref{sec-2.1.2}, and it is assumed to be an analytic set. 

Recall also that, as explained in Section~\ref{sec-2.1}, the integral $\int f dp$ for a universally measurable function $f$ and a Borel probability measure $p$ is defined as the integration of $f$ w.r.t.\ the completion of $p$. For summations involving extended real numbers, the convention $-\infty + \infty = \infty - \infty = \infty$ is used (cf.~\cite[Chap.\ 7.4.4]{bs}). For the model (UC) we consider, such summations can appear in the analysis of the corresponding (DM) but will not appear in that of the original problem (SM).

\begin{definition}[cf.\ {\cite[Defs.\ 9.4-9.6 and 9.7]{bs}}]  \label{def-DM} 
For a Borel-space MDP defined in Section~\ref{sec-2.1.2}, 
the \emph{corresponding deterministic control model (DM)} is defined as follows. 
\begin{enumerate}
\item[(i)] The state and action spaces and the model parameters are given by:
\begin{itemize}
\item[$\bullet$] State space $\P(\X)$ and action space $\P(\X \times \A)$.
\item[$\bullet$] Control constraint $\bar A(\cdot)$, which maps each state $p \in \P(\X)$ to a set $\bar A(p)$ of admissible actions at $p$, defined as
$$ \bar A(p) : = \big\{ \q \in \P(\X \times \A) \ \big|  \ \q(B \times \A) = p(B), \ \q(\Gamma) = 1, \, \forall \, B \in \B(\X) \big\}.$$
I.e., the actions at state $p$ are those Borel probability measures on $\X \times \A$ that have $p$ as the marginal on $\X$ and assign probability one to the graph $\Gamma$ of the original control constraint $A(\cdot)$. (Like $A(\cdot)$, the graph of $\bar A(\cdot)$ is an analytic set in the state-action space $\P(\X) \times \P(\X \times \A)$; see the proof of~\cite[Lemma 9.1]{bs}.)
\item[$\bullet$] System function $\bar f: \P(\X \times \A) \to \P(\X)$, which maps each action $\q \in \P(\X \times \A)$ to a state $\bar f(\q) \in \P(\X)$, defined according to the state transition stochastic kernel of the original problem as
$$ \bar f(\q)(B) : = \textstyle{\int_{\X \times \A} q(B \!\mid x, a) \, \q\big(d(x,a)\big)}, \qquad \forall \, B \in \B(\X).$$
This function $\bar f$ specifies how the states evolve in (DM). 
\item[$\bullet$] One-stage cost function $\bar c : \P(\X \times \A) \to [-\infty, + \infty]$, defined by the one-stage cost function of the original problem as
$$ \bar c(\q) : = \textstyle{\int_{\X \times \A} c(x,a) \, \q \big(d(x,a)\big).} $$
(Like $c(\cdot)$, the function $\bar c(\cdot)$ is lower semi-analytic~\cite[Prop.\ 7.48]{bs}.)
\end{itemize}
\item[(ii)] A \emph{policy for (DM)} is a sequence of mappings $\bar \pi = (\bar \mu_0, \bar \mu_1, \ldots)$ such that for each $k \geq 0$, $\bar \mu_k : \P(\X) \to \P(\X \times \A)$ and $\bar \mu_k(p) \in \bar A(p)$ for every $p \in \P(\X)$. In other words, $\bar \mu_k$ maps each state $p$ to an admissible action at $p$. (Note that every policy here is Markov and there are no measurability conditions on them in this deterministic control problem.) The set of all policies for (DM) is denoted by $\bar \Pi$.
\item[(iii)] Given an initial state $p_0 \in \P(\X)$, applying a policy $\bar \pi$ in (DM) generates recursively a sequence of state and action pairs $(p_0, \q_0), (p_1, \q_1), \ldots$, according to the system function $\bar f$ as
\begin{equation}
  \q_k = \bar \mu_k(p_k), \qquad p_{k+1}=\bar f(\q_k), \quad k \geq 0.
\end{equation}  
Such a sequence $(p_0, \q_0, \q_1, \ldots)$ is called an \emph{admissible sequence}. The set of all admissible sequences that can be generated by the policies in (DM) is denoted by $\Delta$ (which is an analytic set~\cite[Lemma 9.1]{bs}).
\qed
\end{enumerate}
\end{definition} 
\medskip

The policies of the original problem (SM) and (DM) are related (cf.\ \cite[Def.\ 9.9 and Prop.\ 9.2]{bs}):
\begin{itemize}
\item[(a)] Given $p_0 \in \P(\X)$ and a policy $\pi \in \Pi$ of (SM), we can define a policy $\bar \pi$ of (DM) that corresponds to $\pi$ at $p_0$ in the following sense. Let $p_k \in \P(\X)$ and $\q_k \in \P(\X \times \A)$, $k \geq 0$, be the marginal distributions of the state $x_k$ and the state-action pairs $(x_k,a_k)$ in (SM), respectively, at time $k$, under the policy $\pi$ and with the initial state distribution being $p_0$. Then we can define a policy $\bar \pi = (\bar \mu_0, \bar \mu_1, \ldots)$ for (DM) such that those marginal distributions $\q_k, p_{k+1}$, $k \geq 0$, are exactly the actions and states generated by the policy $\bar \pi$ in (DM) according to Def.~\ref{def-DM}(iii), when the initial state is $p_0$. Furthermore, if $\pi$ is Markov, there exists a policy $\bar \pi$ of (DM) that corresponds to $\pi$ at every $p_0 \in \P(\X)$ in the above sense.
\item[(b)] Conversely, given $p_0 \in \P(\X)$ and a policy $\bar \pi$ of (DM), we can define a Markov policy $\pi$ of (SM) such that the two policies correspond at $p_0$ in the sense explained above.
\footnote{The policy $\pi$ is constructed by decomposing the action sequence $\{\q_k\}$ generated from the initial state $p_0$ by the policy $\bar \pi$ in (DM) according to Def.~\ref{def-DM}(iii). Specifically, each $\q_k$ is decomposed into the marginal $p_k$ on $\X$ and a stochastic kernel $\mu_k$ on $\A$ given $X$ that obeys the original control constraint. Then $\pi$ is defined to be the collection of the stochastic kernels $(\mu_0, \mu_1, \ldots)$. See \cite[Prop.\ 9.2]{bs} and its proof for further details.}
\end{itemize}

Let us now consider the average cost problem in (DM) and relate it to the average-cost MDP.
In (DM), for an initial state $p_0 \in \P(\X)$, we denote the $n$-stage cost and the average cost of a policy $\bar \pi$ by $\bar J_n(\bar \pi, p_0)$ and $\bar J(\bar \pi,p_0)$, respectively. Since the problem is deterministic, the former is given by
$\bar J_n(\bar \pi, p_0) = \frac{1}{n} \sum_{k=0}^{n-1} \bar c(\q_k)$
(recall the convention $-\infty + \infty = \infty-\infty=\infty$), where $\{\q_k\}$ is the action sequence generated by $\bar \pi$ according to Def.~\ref{def-DM}(iii). 
As in (SM), the {optimal average cost} at $p_0$ is defined as
$$ \bar g^*(p_0) : = \inf_{\bar \pi \in \bar \Pi} \bar J(\bar \pi,p_0) = \inf_{\bar \pi \in \bar \Pi}  \limsup_{n \to \infty}  \bar J_n(\bar \pi, p_0)/ n.$$

Consider the value $\bar g^*(\delta_x)$ for the Dirac measure $\delta_x$ at $x \in \X$.
Note first that by the correspondence between (DM) and (SM) described in (b) above, if (SM) is in the model class (UC), then in its corresponding (DM), $\bar J_n(\bar \pi, \delta_x)$ is finite for all $x \in \X$, $\bar \pi \in \bar \Pi$ and $n \geq 0$, whereas if (SM) is in the model class (PC), $\bar J_n(\bar \pi, \delta_x)$ is nonnegative and possibly infinite. So for both (PC) and (UC),
\begin{equation} \label{eq-app-bg-finite}
  \bar g^*(\delta_x) > - \infty, \qquad \forall \, x \in \X.
\end{equation}  
More importantly, by the relations described in (a)-(b) above, the average costs of the corresponding policies in (DM) and (SM) are equal as well:
\begin{itemize}
\item[($\text{a}'$)] Given $x \in \X$ and a policy $\pi$ of (SM), there exists a policy $\bar \pi$ of (DM) such that $J(\pi,x) = \bar J(\bar \pi,\delta_x)$. 
\item[($\text{b}'$)] Conversely, given $x \in \X$ and a policy $\bar \pi$ of (DM), there exists a Markov policy $\pi$ of (SM) such that $J(\pi,x) = \bar J(\bar \pi,\delta_x).$ 
\end{itemize}
Thus we must have $g^*(x) = \bar g^*(\delta_x)$ for all $x \in \X$.

\subsection{Proof of Theorem~\ref{thm-ac-basic}} \label{sec-prf-thmacbasic}

We prove Theorem~\ref{thm-ac-basic} in this subsection. We first prove several average cost optimality properties for (DM), and we then transfer the results to (SM) via the correspondence relations between the two models given in the preceding subsection.

\medskip
\noindent {\bf Deriving several desired optimality properties for (DM):} \ 
For (DM), we can write the optimal average cost function $\bar g^*$ as the result of partial minimization of a lower semi-analytic function as follows. Recall the set $\Delta$, which consists of all admissible sequences $(p_0, \q_0, \q_1, \ldots)$ (cf.~\cite[Def.\ 9.7]{bs} and Def.~\ref{def-DM}(iii)). For each $p_0 \in \P(\X)$, denote the vertical section of $\Delta$ at $p_0$ by $\Delta_{p_0}$, i.e.,
$$ \Delta_{p_0} :  = \big\{(\q_0, \q_1, \ldots) \mid (p_0, \q_0, \q_1, \ldots)  \in \Delta \big\};$$
$\Delta_{p_0}$ is the set of all action sequences $(\q_0, \q_1, \ldots)$ that can be generated by some policy of (DM) for the initial state $p_0$. Define a function $G: \Delta \to [-\infty, + \infty]$ by 
$$ G \big(p_0, \q_0, \q_1, \ldots \big) : = \limsup_{n \to \infty} \textstyle{\frac{1}{n} \sum_{k=0}^{n-1} \bar c(\q_k)}.$$
Then $\bar g^*$ can be written equivalently as the result of partial minimization of $G$:
\begin{equation} \label{eq-bJ}
    \bar g^*(p_0) = \inf_{(\q_0, \q_1, \ldots) \in \Delta_{p_0}} G \big(p_0, \q_0, \q_1, \ldots \big).
\end{equation}

A crucial result is that the set $\Delta$ is an analytic subset of the space $\P(\X) \times \big(\P(\X \times \A) \big)^\infty$ (endowed with the product topology) \cite[Lemma 9.1]{bs}. 
Then, since the sum of a finite number of lower semi-analytic functions are lower semi-analytic \cite[Lemma 7.30(4)]{bs} and $\limsup_{n \to \infty} f_n$ is lower semi-analytic for a sequence $\{f_n\}$ of lower semi-analytic functions \cite[Lemma 7.30(2)]{bs}, the function $G$ is lower semi-analytic. Consequently, being the result of partial minimization of $G$ (cf.\ (\ref{eq-bJ})),  the function $\bar g^*$ is lower semi-analytic by \cite[Prop.\ 7.47]{bs}.  

Moreover, by a measurable selection theorem \cite[Prop.\ 7.50]{bs}, for any $\epsilon > 0$, we can select a measurable $\epsilon$-minimizer for the optimization problem in (\ref{eq-bJ}). More precisely, there exists a universally measurable mapping $\psi: \P(\X) \to \big(\P(\X \times \A)\big)^\infty$ such that
for all $p_0 \in \P(\X)$,
$$  \psi(p_0) \in \Delta_{p_0} \ \ \ \text{and} \ \ \ G\big(p_0, \psi(p_0) \big) \leq \begin{cases}  
    \bar g^*(p_0)+\epsilon, & \text{if} \  \bar g^*(p_0) > - \infty; \\
    - 1/\epsilon, & \text{if} \ \bar g^*(p_0) = - \infty.  \end{cases} $$
Furthermore, by \cite[Prop.\ 7.50(b)]{bs}, $\psi$ can be chosen to attain the infimum in (\ref{eq-bJ}) for all $p_0$ at which this is possible; i.e.,
\begin{equation} \label{eq-bJ-min2}
   G\big(p_0, \psi(p_0) \big) = \bar g^*(p_0), \qquad \forall \, p_0 \in \I, 
\end{equation}
where 
\begin{equation} 
 \I = \Big\{ \, p_0 \in \P(\X) \, \, \Big| \,  \, \exists \, (\q_0, \q_1, \ldots) \in \Delta_{p_0} \text{ with } \bar g^*(p_0) = G \big(p_0, \q_0, \q_1, \ldots \big)  \,  \Big\}. \notag
\end{equation}
As noted earlier in (\ref{eq-app-bg-finite}), $\bar g^*(\delta_x) > - \infty$ for all $x \in \X$ if the original problem (SM) is in the class (PC) or (UC). Therefore,
\begin{equation} \label{eq-bJ-min}
  \psi(\delta_x) \in \Delta_{\delta_x} \ \ \ \text{and} \ \ \ G\big(\delta_x, \psi(\delta_x) \big) \leq \bar g^*(\delta_x)+\epsilon, \qquad \forall\, x \in \X.
\end{equation}  
By (\ref{eq-bJ-min2}), in the case $\delta_x \in \I$ for all $x \in \X$, $\psi$ can be chosen to attain the minimal values at those Dirac measures:
\begin{equation} \label{eq-bJ-min1}
   \psi(\delta_x) \in \Delta_{\delta_x} \ \ \ \text{and} \ \ \ G\big(\delta_x, \psi(\delta_x) \big) = \bar g^*(\delta_x), \qquad \forall\, x \in \X.
\end{equation}

We now transfer these results to (SM). 

\medskip
\smallskip
\noindent {\bf Proof of the first part of the theorem:} \ The argument is the same as that in \cite[Chap.\ 9.3]{bs}. Since $\bar g^*$ is lower semi-analytic and the mapping $x \mapsto \delta_x$ is a homeomorphism \cite[Cor.\ 7.21.1]{bs}, the function $g^*(x) = \bar g^*(\delta_x)$ is lower semi-analytic by \cite[Lemma 7.30(3)]{bs}. This proves Theorem~\ref{thm-ac-basic}(i).

\medskip
\smallskip
\noindent {\bf Proof of the second part of the theorem:} \ The second part of the theorem concerns the existence of universally measurable, $\epsilon$-optimal or optimal semi-Markov policies in (SM), 
and to prove it, we start from (\ref{eq-bJ-min}) or (\ref{eq-bJ-min1}) and construct a policy for (SM) with desired properties. We prove the existence of an $\epsilon$-optimal policy first; the proof for the existence of an optimal policy is largely the same and will be given at the end.

For the average cost problem (\ref{eq-bJ}) in (DM), (\ref{eq-bJ-min}) gives an $\epsilon$-optimal solution $\sigma(x) = \psi(\delta_x)$ for $x \in \X$, which is universally measurable in $x$ by the universal measurability of $\psi(\cdot)$ together with \cite[Cor.~7.21.1 and Lemma 7.30(3)]{bs}. 
Express this solution as 
$$\sigma(x) = (\q_0(x), \q_1(x), \ldots)$$ 
where each $\q_k: \X \to \P(\X \times \A)$ is a universally measurable stochastic kernel on $\X \times \A$ given $\X$.
We then apply \cite[Prop.\ 7.27]{bs} to decompose each $\q_k$, $k \geq 0$, into two universally measurable stochastic kernels $\mu_k(d a_k \mid x, x_k)$ and $p_k(dx_k \mid x)$ that satisfy
\begin{equation} \label{eq-app-prf-acpol0}
  \q_k(x)(B) = \int_\X \int_\A \ind_B(x_k, a_k) \, \mu_k(d a_k \mid x, x_k) \, p_k(dx_k \mid x), \qquad \forall \, B \in \B(\X \times \A),
\end{equation}  
where $\ind_B(\cdot)$ denotes the indicator function for the set $B$.
For $k = 0$, by the control constraint in (DM) (cf.~Def.~\ref{def-DM}(i)), we have $p_0(dx_0 \mid x) = \delta_x$ and $\q_0(x)(\Gamma) = 1$, so $\mu_0(d a_0 \mid x, x)$ must satisfy the control constraint of (SM):
\begin{equation} \label{eq-app-prf-acpol1a}
 \mu_0\big( A(x) \mid x, x \big) = 1, \qquad \forall \, x \in \X.
\end{equation}
For $k \geq 1$, the stochastic kernels $\mu_k$ need not satisfy the control constraint of (SM). However, for each $x$, since $\q_k(x)(\Gamma) = 1$ by the control constraint in (DM) (cf. Def.~\ref{def-DM}(i)), we have, by (\ref{eq-app-prf-acpol0}),
$$ \int_\X \mu_k( A(x_k) \mid x, x_k) \, p_k(dx_k \mid x) = 1,$$
which implies that for each $x$,
\begin{equation} \label{eq-app-prf-acpol1}
 \mu_k( A(x_k) \mid x, x_k) = 1,  \qquad \text{$p_k(dx_k \mid x)$-almost surely}. 
\end{equation}
We now alter each $\mu_k$ at those points $(x, x_k)$ where the control constraint in (SM) is violated, in order to make the collection of these stochastic kernels a valid policy for (SM).

To this end, for each $k \geq 1$, define a set 
\begin{equation} \label{eq-app-prf-acpol2}
 D_k : = \big\{ (x, x_k) \mid \mu_k( A(x_k) \mid x, x_k) < 1 \big\}.
\end{equation} 
Since 
$$ \mu_k\big( A(x_k) \mid x, x_k \big) =  \int_\A \ind_\Gamma(x_k, a_k) \, \mu_k(da_k \mid x, x_k) $$
and the indicator function $\ind_\Gamma(x_k, a_k)$ is universally measurable (because $\Gamma$ is analytic),
by \cite[Prop.~7.46]{bs}, the function $\mu_k( A(x_k) \mid x, x_k)$ is universally measurable in $(x, x_k)$.
Consequently, the set $D_k$ is universally measurable.
Then for each $x$, as the vertical section of $D_k$, $D_{k,x} : = \{ x_k \mid (x, x_k) \in D_k\}$ is also universally measurable \cite[Lemma 7.29]{bs}, and by (\ref{eq-app-prf-acpol1}),
\begin{equation} \label{eq-app-prf-acpol1b}
    p_k\big( D_{k,x} \mid x \big)  = 0, \qquad \forall x \in \X.
\end{equation} 
Recall that since $\Gamma$ is analytic, by the Jankov-von Neumann selection theorem \cite[Prop.\ 7.49]{bs}, there exists a universally (in fact, analytically) measurable stationary policy $\mu^o$.
We now define stochastic kernels $\tilde \mu_k$, $k \geq 1$, by altering each $\mu_k$ as follows:
\begin{align} \label{eq-app-prf-acpol3}
    \tilde \mu_k(da_k \mid x, x_k) = \begin{cases}
          \mu_k(da_k \mid x, x_k), \quad &  \text{if} \ (x, x_k) \not\in D_k;\\
          \mu^o(da_k \mid x_k), \quad & \text{if} \ (x, x_k) \in D_k.
      \end{cases}
\end{align}
By definition each $\tilde \mu_k$ above is a universally measurable stochastic kernel that obeys the control constraint in (SM).
For $k = 0$, let
$$ \tilde \mu_0(da_0 \mid x) = \mu_0(da_0 \mid x, x).$$
By \cite[Prop.\ 7.44]{bs}, $\tilde \mu_0$ is universally measurable, and by (\ref{eq-app-prf-acpol1a}), it satisfies the control constraint in (SM).
Then, identifying $x$ with $x_0$, we have that
$$\pi = \big(\tilde \mu_0(da_0 \mid x), \, \tilde \mu_1(da_1 \mid x, x_1), \, \tilde \mu_2(da_2 \mid x, x_2), \, \ldots \big)$$ 
is a universally measurable, semi-Markov policy for (SM).

What remains to prove is that $\pi$ is an $\epsilon$-optimal policy. This follows from (\ref{eq-app-prf-acpol1a})-(\ref{eq-app-prf-acpol1}), (\ref{eq-app-prf-acpol1b})-(\ref{eq-app-prf-acpol3}) and the correspondence relations between $\pi$ and a policy in (DM) that generates the admissible sequence 
$\big(\delta_x, \sigma(x)\big) = \big(\delta_x, \q_0(x), \q_1(x), \ldots \big)$ for each $x \in \X$. 
Specifically, in (SM), under $\pi$, if the initial state $x_0 = x$,  the marginal distribution $\tilde \q_k(x)$ of $(x_k, a_k)$ for $k \geq 1$ is given by
$$ \tilde \q_k(x)(B) : = \int_\X \int_\A \ind_B(x_k, a_k) \, \tilde \mu_k(da_k \mid x, x_k)  \, \tilde p_k( d x_k \mid x), \qquad \forall \, B \in \B(\X \times \A), $$
where $\tilde p_k (d x_k \mid x)$ is the marginal distribution of $x_k$, and for $k = 0$,  the marginal distributions of $x_0$ and $(x_0, a_0)$ satisfy
$$ \tilde p_0 (d x_0 \mid x) = \delta_x = p_0 (d x_0 \mid x), \qquad \tilde \q_0(x) = \q_0(x),$$
in view of the definitions of $\tilde \mu_0$ and $\mu_0$.
We now use induction on $k$ and the relation~(\ref{eq-app-prf-acpol1}) to verify that $\tilde p_k (d x_k \mid x) = p_k (d x_k \mid x)$ and $\tilde \q_k(x) = \q_k(x)$ for all $k \geq 0$. Suppose that for some $k \geq 1$, we have shown that they hold for $0, 1, \ldots k-1$; let us prove that they also hold for $k$. Since $\tilde \q_{k-1}(x) = \q_{k-1}(x)$ by assumption, the marginal distributions of $x_k$ are also equal: $\tilde p_k(dx_k \mid x) = p_k(dx_k \mid x)$, in view of the definition of the transition function in (DM) (cf.~Def.~\ref{def-DM}(i)).
Together with (\ref{eq-app-prf-acpol1b}) and the definition (\ref{eq-app-prf-acpol3}) of $\tilde \mu_k$, this implies that for any $B \in \B(\X \times \A)$,
\begin{align*}
   \int_\X \int_\A \ind_{B}(x_k, a_k) \, \tilde \mu_k(da_k \mid x, x_k) \, \tilde p_k(dx_k \mid x)  & =  \int_\X \int_\A \ind_{B}(x_k, a_k) \, \tilde \mu_k(da_k \mid x, x_k) \, p_k(dx_k \mid x) \\
   &  =\int_{\X \setminus D_{k,x}} \int_\A \ind_{B}(x_k, a_k) \, \tilde \mu_k(da_k \mid x, x_k) \, p_k(dx_k \mid x) \\
   & = \int_{\X \setminus D_{k,x}} \int_\A \ind_{B}(x_k, a_k) \, \mu_k(da_k \mid x, x_k) \, p_k(dx_k \mid x) \\
   & = \int_{\X} \int_\A \ind_{B}(x_k, a_k) \, \mu_k(da_k \mid x, x_k) \, p_k(dx_k \mid x).
\end{align*}   
This shows $\tilde \q_k(x) = \q_k(x)$. So by the induction argument, we have $\tilde \q_k(x) = \q_k(x)$ for all $k \geq 0$.

The result just proved implies that for each $x \in \X$, the average cost of the policy $\pi$ equals $G\big(\delta_x, \psi(\delta_x) \big) \leq \bar g^*(\delta_x) + \epsilon$ (cf.\ (\ref{eq-bJ-min})). As proved earlier, $g^*(x) = \bar g^*(\delta_x)$ for $x \in \X$, so $\pi$ is $\epsilon$-optimal. This proves the existence of a universally measurable, $\epsilon$-optimal, randomized semi-Markov policy.

Finally, consider the last statement of the theorem. By assumption, for each $x \in \X$, there exists a policy in (SM) attaining the optimal average cost $g^*(x)$. By the correspondence relations between (SM) and (DM) discussed at the end of Section~\ref{appsec-dm}, this means that for each $x \in \X$, there exists a policy in (DM) attaining the optimal average cost $\bar g^*(\delta_x)$. 
Thus we can choose a solution $\psi(\cdot)$ of the average cost problem (\ref{eq-bJ}) in (DM) to satisfy (\ref{eq-bJ-min1}).
Then, letting $\sigma(x) = \psi(\delta_x)$ for $x \in \X$ in the preceding proof, we obtain, with exactly the same arguments, that the semi-Markov policy $\pi$ constructed above for (SM) has average costs $J(\pi, x) = G\big(\delta_x, \psi(\delta_x) \big) = \bar g^*(\delta_x) = g^*(x)$ for all $x \in \X$. 
So $\pi$ is a universally measurable, optimal, randomized semi-Markov policy.
This completes the proof of Theorem~\ref{thm-ac-basic}(ii).

\subsection{Proof of Theorem~\ref{thm-dcoe} for (UC)} \label{appsec-prf-uc}

Note first that the two conditions (a)-(b) in Definition~\ref{def-uc} of the model (UC) ensure that for all policies $\pi$, starting from $x \in \X$, the expected $\alpha$-discounted total cost $v^\pi_\alpha(x)$ is bounded in absolute value by 
$$ \hat c \cdot \sum_{n = 0}^\infty \alpha^n \,\E^\pi_x \big[ w(x_n) \big]  \leq \frac{\hat c \, w(x)}{1 - \alpha \lambda} + \frac{\hat c \, b}{(1-\alpha)(1 - \lambda)}.$$
Therefore, the optimal value function $v_\alpha$ is not only finite everywhere but also satisfies $\|v_\alpha\|_w < \infty$.

We now consider the corresponding deterministic control model (DM) described in Section~\ref{appsec-dm}. 
For a policy $\bar \pi \in \bar \Pi$ and initial state $p_0 \in \P(\X)$, define its $\alpha$-discounted total cost to be
$$ \bar v_{\alpha}^{\bar \pi}(p_0) : = \limsup_{n \to \infty} \textstyle{\sum_{k=0}^{n-1} \alpha^k  \, \bar c(\q_k)}$$
where $\{\q_k\}$ is the action sequence generated by $\bar \pi$ according to Def.~\ref{def-DM}(iii).
Define the optimal $\alpha$-discounted value function in (DM) as 
$$\bar v_{\alpha}(p_0) : = \inf_{\bar \pi \in \bar \Pi} \bar v_{\alpha}^{\bar \pi}(p_0), \qquad p_0 \in \P(\X).$$
Similarly to the average cost case, the correspondence between (DM) and (SM) implies the following:
\begin{itemize}
\item $v_\alpha(x) = \bar v_{\alpha}(\delta_x)$ for all $x \in \X$.
\item $\bar v_{\alpha}$ can be expressed as the result of a partial minimization problem:
\begin{equation} \label{eq-dm-partial-alpha}
    \bar v_\alpha(p_0) = \inf_{(\q_0, \q_1, \ldots) \in \Delta_{p_0}} G_\alpha \big(p_0, \q_0, \q_1, \ldots \big)
\end{equation}
for the function
\begin{equation} \label{eq-G-alpha}
   G_\alpha \big(p_0, \q_0, \q_1, \ldots \big) : = \limsup_{n \to \infty} \textstyle{ \sum_{k=0}^{n} \alpha^k \bar c(\q_k)}.
\end{equation}
\end{itemize}
Then, with arguments almost identical to those given in the preceding subsection for proving Theorem~\ref{thm-ac-basic}, we can draw the following conclusions similar to those derived in the average cost case:
\begin{itemize}
\item [(i)] The function $\bar v_\alpha$ is lower semi-analytic. This implies (as in the proof of Theorem~\ref{thm-ac-basic}(i)) that the optimal value function of (SM), $v_\alpha(x) = \bar v_\alpha(\delta_x)$, is a (finite-valued) lower semi-analytic function.
\item [(ii)] Similarly to the proof of Theorem~\ref{thm-ac-basic}(ii), for each $\epsilon > 0$, we can construct, from an $\epsilon$-optimal solution to (\ref{eq-dm-partial-alpha}), a universally measurable, $\epsilon$-optimal, semi-Markov policy $\pi_\epsilon$ for the original problem (SM). The value function $v^{\pi_\epsilon}_\alpha$ of this policy then satisfies
\begin{equation} \label{eq-prf2a}
 v^{\pi_\epsilon}_\alpha \leq v_\alpha + \epsilon.
\end{equation}
Note that $v^{\pi_\epsilon}_\alpha$ is universally measurable.
\end{itemize}

We now focus on the original problem (SM) and use the above results together with the contraction property of the dynamic programming operator $T_\alpha$ to prove Theorem~\ref{thm-dcoe} for (UC). By Lemma~\ref{lem-uc-Ta-contraction} (see Appendix~\ref{appsec-lem}) and the Banach fixed point theorem, $T_\alpha$ has a unique fixed point in $\A(\X) \cap \M_{\tilde w}(\X)$ for some weight function $\tilde w \geq w$. Since $v_\alpha \in \An(\X) \cap \M_{w}(\X)$ by the preceding proof and $\M_{w}(\X) \subset \M_{\tilde w}(\X)$, we have $v_\alpha \in \An(\X) \cap \M_{\tilde w}(\X)$.  So to prove the theorem, we only need to show $v_\alpha = T_\alpha v_\alpha$. To this end, we prove first $v_\alpha \leq T_\alpha v_\alpha$ and then $v_\alpha \geq T_\alpha v_\alpha$.

Since $v_\alpha \in \An(\X) \cap \M_{w}(\X)$, $T_\alpha v_\alpha \in \An(\X) \cap \M_{w}(\X)$ by Lemma~\ref{lem-T-lsa} and the model condition of (UC). So by a measurable selection theorem \cite[Prop.\ 7.50]{bs}, there exists a universally measurable nonrandomized stationary policy $\mu$ such that
\begin{equation} \label{eq-prf2b}
T_\alpha^\mu v_\alpha \leq T_\alpha v_\alpha + \epsilon,
\end{equation}
where $T_\alpha^\mu: \M(\X) \to \M(\X)$ and it is given by $(T_\alpha^\mu v)(x) = c(x, \mu(x)) + \alpha \int_\X v(y) \, q(dy \,|\, x, a)$ for $v \in \M(\X)$.
Consider the policy $\pi$ that applies $\mu$ at the first stage and then applies $\pi_\epsilon$ afterwards. 
By the monotonicity of $T_\alpha^\mu$ and the inequalities (\ref{eq-prf2a}) and (\ref{eq-prf2b}),
$$  v^\pi_\alpha = T_\alpha^\mu v^{\pi_\epsilon}_\alpha \leq T_\alpha^\mu v_\alpha + \alpha \epsilon \leq  T_\alpha v_\alpha +  2 \epsilon.$$
Since $v^\pi_\alpha \geq v_\alpha$ and $\epsilon$ is arbitrary, we obtain
$v_\alpha \leq T_\alpha v_\alpha.$

For the reverse inequality, consider again the $\epsilon$-optimal semi-Markov policy $\pi_\epsilon$ and express it as $\big(\mu_0(da_0 \mid x_0), \mu_1(da_1 \mid  x_0, x_1), \mu_2(da_2 \mid x_0, x_2), \ldots \big)$. 
For $x \in \X$, since
\begin{align*}
\E^{\pi_\epsilon}_x \left[ \sum_{n=1}^{\infty} \alpha^n c(x_n, a_n) \right] & = \E^{\pi_\epsilon}_x \left[  \E^{\pi_\epsilon}_x \Big[ \sum_{n=1}^{\infty} \alpha^n c(x_n, a_n) \, \Big| \, x_0 , x_1 \Big] \right] 
\geq \E^{\pi_\epsilon}_x \left[ \alpha\, v_\alpha(x_1) \right],
\end{align*}
we have
\begin{align}
  v^{\pi_\epsilon}_\alpha(x) & =  \int_\A c(x,a) \, \mu_0(da \mid x) + \E^{\pi_\epsilon}_x \left[ \sum_{n=1}^{\infty} \alpha^n c(x_n, a_n) \right]  \notag \\
      & \geq  \int_\A  c(x,a)  \, \mu_0(da \mid x) + \alpha \int_\A \int_\X v_\alpha(y) \, q(dy \mid x, a) \, \mu_0(da \mid x) \notag \\
      & \geq \inf_{a \in A(x)} \left\{ c(x, a) +   \alpha \int_\X v_\alpha(y) \, q(dy \mid x, a) \right\} = (T_\alpha v_\alpha)(x).  \label{eq-prf2c}
\end{align}
Combining (\ref{eq-prf2a}) and (\ref{eq-prf2c}) and taking $\epsilon$ to be arbitrarily small, we have
$$v_\alpha + \epsilon \geq v^{\pi_\epsilon}_\alpha \geq T_\alpha v_\alpha, \qquad \Longrightarrow \quad v_\alpha \geq T_\alpha v_\alpha.$$
Hence $v_\alpha = T_\alpha v_\alpha$.

Finally, given $\epsilon' >0$, the nonrandomized stationary policy $\mu$ in (\ref{eq-prf2b}) with $\epsilon = (1 - \alpha) \epsilon'$ is an $\epsilon'$-optimal policy. This follows from combining (\ref{eq-prf2b}) with the monotonicity of $T^\mu_\alpha$ and with the observation that $T^\mu_\alpha$ is a contraction on $\M_{\tilde w}(\X)$ by the same proof of Lemma~\ref{lem-uc-Ta-contraction} and has the value function $v^\mu_\alpha$ as its unique fixed point in $\M_{\tilde w}(\X)$. (We omit the details of the arguments since they are standard and straightforward.) This completes the proof of Theorem~\ref{thm-dcoe} for (UC). \qed

\medskip
\begin{rem}[about the proof] \label{rmk-prf-dcoe-uc} 
We mentioned in the discussion after Theorem~\ref{thm-dcoe} that our proof of its (UC) part is similar to, but does not follow exactly the one given in \cite[Chap.\ 9]{bs} for the case of a bounded one-stage cost function. Let us explain this more here. 

First, there are special cases of the weight function $w(\cdot)$ for which a well-known technique can be applied to convert the problem to one with bounded one-stage costs, and then Theorem~\ref{thm-dcoe}(UC) follows immediately from the result of \cite[Chap.\ 9]{bs}. In particular, suppose that the weight function $w(\cdot)$ is Borel measurable, instead of universally measurable, so that for each $\alpha < 1$, the weight function $\tilde w(\cdot)$ constructed in the proof of Lemma~\ref{lem-uc-Ta-contraction} is also Borel measurable. 
Then using the contraction property of $T_\alpha$ w.r.t.\ the $\| \cdot\|_{\tilde w}$ norm (Lemma~\ref{lem-uc-Ta-contraction}), one can apply Veinott's similarity transformation (\cite{Vei69}; see also \cite[Chap.\ 5.2, p.~100-102]{vdW84}) to convert the $\alpha$-discounted problem into a $\beta$-discounted problem with bounded costs: 
\begin{equation} \label{eq-sim-trans}
   c(x,a) \, \mapsto \, c(x,a)/\tilde w(x), \qquad  \alpha \, q(dy \mid x, a) \, \mapsto \, \alpha \big(\tilde w(y)/\tilde w(x)\big) \, q(dy \,|\, x, a).
\end{equation} 

When $w(\cdot)$ is universally measurable, the conversion just mentioned no longer works. This is because after the similarity transformation (\ref{eq-sim-trans}), the resulting problem has $c(x,a)/\tilde w(x)$ as the one-stage cost function and $\alpha \big(\tilde w(y)/\tilde w(x)\big) q(dy \,|\, x, a)$ as the state transition (sub)-stochastic kernel. The former function is universally measurable and not necessarily lower semi-analytic, and the latter stochastic kernel is universally measurable instead of Borel measurable. So the resulting MDP no longer satisfies the model assumptions in the universal measurability framework for MDPs.

Note also that even when the conversion can be done for each $\alpha$-discounted problem, the model condition of (UC) does not imply that the average cost problem can also be converted to one with bounded one-stage costs.

The proof we gave in this appendix differs from the counterpart in \cite[Chap.\ 9]{bs} in the following way.
Like in the average cost case, we used (DM) to establish $v_\alpha \in \An(\X)$ and the existence of a universally measurable, $\epsilon$-optimal semi-Markov policy in the original MDP. We then combined these facts with the contraction property of $T_\alpha$ to derive the theorem. In \cite[Chap.\ 9]{bs}, the $\alpha$-DCOE for the case of bounded one-stage costs is derived first for (DM) and then transferred to the original problem. This route seems inconvenient for (UC) because with unbounded one-stage costs, the optimal value function in (DM) can take $\pm \infty$ values for some initial distributions $p_0 \in \P(\X)$, although its value at any Dirac measure on $\X$ is finite due to the (UC) model conditions. Thus the dynamic programming operator in (DM) works on the space of extended real-valued, lower semi-analytic functions on $\P(\X)$ and is not a contraction on that space. It is more convenient to work directly with the operator $T_\alpha$ and the original problem, after obtaining the needed optimality properties from (DM). \qed
\end{rem}

\subsection{Three Technical Lemmas} \label{appsec-lem}

The first lemma is about the lower semi-analyticity of $v_\alpha(x)$ as a function of $(\alpha, x)$. We use its (PC) part to prove Lemma~\ref{lem-h-lsa-pc} in Section~\ref{sec-pc-acoi} for establishing the ACOI for the (PC) model.

\begin{lem} \label{lem-valpha-lsc}
{\rm (PC)(UC)} \ On $(0, 1) \times \X$, the function $f(\alpha, x) = v_\alpha(x)$ is lower semi-analytic.
\end{lem}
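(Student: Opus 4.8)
The plan is to extend, with the discount factor $\alpha$ carried along as one extra coordinate, the same partial-minimization argument that was used for the average-cost case in Appendix~\ref{sec-prf-thmacbasic} and, for each fixed $\alpha$, in Appendix~\ref{appsec-prf-uc}. Recall from there that $v_\alpha(x) = \bar v_\alpha(\delta_x)$ and that, in the corresponding (DM), $\bar v_\alpha$ has the representation (\ref{eq-dm-partial-alpha}) as the partial minimization of the function $G_\alpha$ of (\ref{eq-G-alpha}) over the vertical sections $\Delta_{p_0}$ of the analytic set $\Delta$. I would work on the product space $(0,1) \times \P(\X) \times \big(\P(\X \times \A)\big)^\infty$, treat $(\alpha, p_0)$ as the ``state'' to be retained and the action sequence $(\q_0, \q_1, \ldots)$ as the coordinate to be minimized out, and show that $(\alpha, p_0) \mapsto \bar v_\alpha(p_0)$ is lower semi-analytic jointly; composing with $x \mapsto \delta_x$ then delivers the claim.

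First I would note that $\tilde \Delta := (0,1) \times \Delta$ is analytic, being the preimage of the analytic set $\Delta$ (\cite[Lemma~9.1]{bs}) under the continuous projection that forgets the $\alpha$-coordinate, since Borel preimages of analytic sets are analytic (Section~\ref{sec-2.1.1b}). Next I would define, on $\tilde \Delta$, the function $\tilde G(\alpha, p_0, \q_0, \q_1, \ldots) := G_\alpha(p_0, \q_0, \q_1, \ldots) = \limsup_{n \to \infty} \sum_{k=0}^{n} \alpha^k \bar c(\q_k)$ and show it is lower semi-analytic in all its arguments jointly. For each fixed $k$, the factor $\bar c(\q_k)$ is lower semi-analytic (Def~\ref{def-DM}), and its composition with the continuous coordinate projection $(\alpha, p_0, \q_0, \ldots) \mapsto \q_k$ is lower semi-analytic by Section~\ref{sec-2.1.1b}(ii); the factor $\alpha^k$ is a nonnegative Borel-measurable function of the same variables, so the product $\alpha^k \bar c(\q_k)$ is lower semi-analytic by the product rule of Section~\ref{sec-2.1.1b}(iii). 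Summing finitely many such terms and taking $\limsup_n$ then preserves lower semi-analyticity by \cite[Lemma~7.30(2),(4)]{bs}, whence $\tilde G$ is lower semi-analytic on $\tilde \Delta$.

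With $\tilde \Delta$ analytic and $\tilde G$ lower semi-analytic, I would apply the partial-minimization theorem \cite[Prop.~7.47]{bs}: minimizing $\tilde G$ over the vertical sections $\tilde \Delta_{(\alpha, p_0)} = \Delta_{p_0}$ (which are independent of $\alpha$) yields a lower semi-analytic function on the projection $\text{proj}_{(0,1) \times \P(\X)}(\tilde \Delta) = (0,1) \times \P(\X)$, and by (\ref{eq-dm-partial-alpha}) this function is exactly $(\alpha, p_0) \mapsto \bar v_\alpha(p_0)$. Finally, since $x \mapsto \delta_x$ is a homeomorphism \cite[Cor.~7.21.1]{bs}, the map $(\alpha, x) \mapsto (\alpha, \delta_x)$ is Borel measurable, so composing and invoking Section~\ref{sec-2.1.1b}(ii) shows that $(\alpha, x) \mapsto \bar v_\alpha(\delta_x) = v_\alpha(x)$ is lower semi-analytic on $(0,1) \times \X$.

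The step I expect to require the most care is the joint lower semi-analyticity of $\tilde G$, specifically the factors $\alpha^k$ in the (UC) case where $\bar c$ need not be nonnegative. There one must invoke the version of the product rule for a lower semi-analytic factor times a \emph{nonnegative Borel} factor, rather than the rule for a product of two nonnegative functions, and keep track of the extended-arithmetic convention fixed in Section~\ref{sec-2.1.1b}. Everything else is a direct reuse of the machinery already assembled for the fixed-$\alpha$ result.
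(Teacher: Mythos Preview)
Your proposal is correct and follows essentially the same route as the paper: view $G_\alpha$ as a lower semi-analytic function of $(\alpha,z)$ on the analytic set $(0,1)\times\Delta$, apply the partial-minimization theorem \cite[Prop.~7.47]{bs} to obtain joint lower semi-analyticity of $(\alpha,p_0)\mapsto\bar v_\alpha(p_0)$, and then compose with the homeomorphism $(\alpha,x)\mapsto(\alpha,\delta_x)$. Your explicit care with the product rule in the (UC) case---invoking the ``nonnegative Borel times lower semi-analytic'' clause of \cite[Lemma~7.30(4)]{bs} for the factor $\alpha^k$---is exactly what the paper relies on as well.
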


\begin{proof}
We use the correspondence between the deterministic control model (DM) and the original problem (SM) to prove this lemma.
Recall that in the preceding Section~\ref{appsec-prf-uc}, in the course of proving Theorem~\ref{thm-dcoe} for (UC), 
we have shown the following results regarding the optimal value function $\bar v_\alpha$ in (DM), which hold for (PC) as well by essentially the same arguments:
For $\alpha \in (0,1)$ and $x \in \X$, $\bar v_\alpha(\delta_x) = v_\alpha(x)$; and $\bar v_\alpha$ can be expressed as the result of partial minimization:
\begin{equation} \label{eq-dm-partial-alpha0}
   \bar v_\alpha(p_0) = \inf_{(\q_0, \q_1, \ldots) \in \Delta_{p_0}} G_\alpha \big(p_0, \q_0, \q_1, \ldots \big), \quad \text{where} \ \ \ G_\alpha \big(p_0, \q_0, \q_1, \ldots \big)  = \limsup_{n \to \infty} \sum_{k=0}^{n} \alpha^k \bar c(\q_k). 
\end{equation}   
Since $\alpha \geq 0$ and $\bar c(\cdot)$ is lower semi-analytic (cf.\ Def.~\ref{def-DM}(i)), the product $\alpha^k \bar c(\gamma_k)$ is a lower semi-analytic function of $(\alpha, z)$ on $(0, 1) \times \Delta$ by \cite[Lemma 7.30(4)]{bs}. Therefore, $G_\alpha (z)$ is a lower semi-analytic function of $(\alpha, z)$ on $(0, 1) \times \Delta$ by \cite[Lemma 7.30 (2) and (4)]{bs}. Then, as the result of the partial minimization (\ref{eq-dm-partial-alpha0}), $\bar v_\alpha(p_0)$ is a lower semi-analytic function of $(\alpha, p_0)$ by \cite[Prop.\ 7.47]{bs}. Since $f(\alpha, x) = v_\alpha(x) = \bar v_\alpha(\delta_x)$ and the mapping $(\alpha, x) \mapsto (\alpha, \delta_x)$ is a homeomorphism \cite[Cor.~7.21.1]{bs}, this implies, by \cite[Lemma 7.30(3)]{bs}, that $f$ is a lower semi-analytic function of $(\alpha, x)$.
\end{proof}

The next lemma is about the contraction property of the dynamic programming operator $T_\alpha$ stated in (\ref{eq-contraction-Ta-uc}) for the (UC) model. We use it in the proof of Theorem~\ref{thm-dcoe} for (UC).

\begin{lem} \label{lem-uc-Ta-contraction}
{\rm (UC)} \ For some universally measurable wight function $\tilde w \geq w$, the operator $T_\alpha$ is a contraction on the closed subset $\An(\X) \cap \M_{\tilde w}(\X)$ of the Banach space $(\M_{\tilde w}(\X), \| \cdot\|_{\tilde w})$.
\end{lem}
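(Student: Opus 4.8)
The plan is to fix $\alpha \in (0,1)$, choose a contraction modulus $\beta \in (\alpha, 1)$ (e.g., $\beta = (1+\alpha)/2$), and build $\tilde w$ by adding a large enough constant to $w$ so that the additive term $b$ in the drift condition of Definition~\ref{def-uc}(b) is absorbed and the effective one-step multiplicative factor drops strictly below $\beta/\alpha$. The only step with genuine content is the choice of this constant; once it is made, the contraction estimate is entirely mechanical.

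Concretely, I would set $\tilde w := w + d$ for a constant $d > 0$ to be determined. This $\tilde w$ is universally measurable, satisfies $\tilde w \geq w \geq 1$, and is therefore a legitimate weight function. The target is the one-step drift bound
$$ \alpha \sup_{a \in A(x)} \int_\X \tilde w(y)\, q(dy \mid x, a) \;\leq\; \beta\, \tilde w(x), \qquad \forall\, x \in \X. $$
By Definition~\ref{def-uc}(b), the left-hand side is at most $\alpha\big(\lambda w(x) + b + d\big)$, so the displayed inequality reduces to $(\beta - \alpha\lambda)\, w(x) + (\beta - \alpha)\, d \geq \alpha b$. Since $\lambda < 1$ forces $\beta > \alpha > \alpha\lambda$, both coefficients $\beta - \alpha\lambda$ and $\beta - \alpha$ are strictly positive; using $w(x) \geq 1$, the bound holds at every $x$ as soon as $d \geq \big(\alpha b - (\beta - \alpha\lambda)\big)/(\beta - \alpha)$, which is a finite threshold. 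Choosing $d$ above this threshold (and positive) is the crux of the argument, and is the analogue of the standard shift trick for weighted-norm contractions.

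With the drift bound in hand the rest is routine. First I would verify that $T_\alpha$ maps $\An(\X) \cap \M_{\tilde w}(\X)$ into itself: for $v \in \M_{\tilde w}(\X)$, Definition~\ref{def-uc}(a) gives $|c(x,a)| \leq \hat c\, w(x) \leq \hat c\, \tilde w(x)$, and the drift bound controls $\alpha \int_\X |v|\, q(dy\mid x,a) \leq \beta\,\|v\|_{\tilde w}\, \tilde w(x)$, so each $T_\alpha v(x)$ is a finite number and $\|T_\alpha v\|_{\tilde w} \leq \hat c + \beta\,\|v\|_{\tilde w} < \infty$; lower semi-analyticity of $T_\alpha v$ follows from Lemma~\ref{lem-T-lsa}. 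Then, for $v, v' \in \An(\X) \cap \M_{\tilde w}(\X)$, I would invoke the elementary inequality $|\inf_a g(a) - \inf_a g'(a)| \leq \sup_a |g(a) - g'(a)|$ applied to $g(a) = c(x,a) + \alpha \int_\X v\, q(dy\mid x,a)$ and its $v'$-analogue. The cost term cancels, leaving
$$ \big|(T_\alpha v)(x) - (T_\alpha v')(x)\big| \;\leq\; \alpha \sup_{a \in A(x)} \int_\X |v - v'|\, q(dy \mid x, a) \;\leq\; \alpha\, \|v - v'\|_{\tilde w} \sup_{a \in A(x)} \int_\X \tilde w\, q(dy \mid x, a), $$
and the drift bound turns the right-hand side into $\beta\,\|v - v'\|_{\tilde w}\, \tilde w(x)$; dividing by $\tilde w(x)$ and taking the supremum over $x$ gives $\|T_\alpha v - T_\alpha v'\|_{\tilde w} \leq \beta\,\|v - v'\|_{\tilde w}$. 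Finally, $(\M_{\tilde w}(\X), \|\cdot\|_{\tilde w})$ is a Banach space, and $\An(\X) \cap \M_{\tilde w}(\X)$ is a closed subset of it, since $\|\cdot\|_{\tilde w}$-convergence implies pointwise convergence and pointwise limits of lower semi-analytic functions are lower semi-analytic (as recorded in Section~\ref{sec-2.1.2}). I do not anticipate a substantive obstacle here: the entire difficulty, such as it is, is concentrated in calibrating the constant $d$ so that the drift multiplier beats $\beta/\alpha$ uniformly in $x$.
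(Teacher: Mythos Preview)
Your proof is correct, and the route differs from the paper's only in the construction of $\tilde w$. You take the direct additive shift $\tilde w = w + d$ and solve for $d$ so that the drift inequality $\alpha \sup_{a\in A(x)}\int \tilde w\, q(dy\mid x,a) \le \beta\,\tilde w(x)$ holds with a pre-chosen $\beta\in(\alpha,1)$. The paper instead follows the Hern\'andez-Lerma--Lasserre construction: it iterates the drift condition to get $\sup_{a}\int c_n\, q(dy\mid x,a)\le c_{n+1}$ for $c_n=\lambda^n w + b(1-\lambda^n)/(1-\lambda)$, picks $\tilde\alpha\in(\alpha,1)$, and sets $\tilde w=\sum_{n\ge0}\tilde\alpha^n c_n$, which yields $\alpha\int\tilde w\, q(dy\mid x,a)\le(\alpha/\tilde\alpha)\,\tilde w(x)$. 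Summing the geometric series shows the paper's $\tilde w$ is itself an affine function of $w$, so the two weights are equivalent; your calibration of the constant is simply more transparent. The remaining steps---mapping $\An(\X)\cap\M_{\tilde w}(\X)$ into itself via Lemma~\ref{lem-T-lsa} and the (UC) bounds, the $|\inf - \inf|\le \sup|\cdot|$ estimate, and closedness of $\An(\X)\cap\M_{\tilde w}(\X)$ under $\|\cdot\|_{\tilde w}$-limits---are handled identically. (Minor pointer: the closedness remark is recorded in the footnote just before Lemma~\ref{lem-T-lsa}, not in Section~\ref{sec-2.1.2}.)
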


\begin{proof}
We have already shown that $T_\alpha$ maps $\An(\X)$ into $\An(\X)$ (Lemma~\ref{lem-T-lsa}). To prove the lemma, we need to show that for some $\beta \in (0, 1)$ and $\tilde w \geq w$, $\left\| T_\alpha v - T_\alpha v' \right\|_{\tilde w} \leq \beta \left\| v - v' \right\|_{\tilde w}$ for all $v , v' \in \An(\X) \cap \M_{\tilde w}(\X)$. The proof is essentially the same as the analysis given in \cite[p.\ 45-46]{HL99} (cf.\ the proof of Prop.\ 8.3.4 and Remark 8.3.5 therein). To construct $\tilde w$ with the desired property, we proceed as follows. Define universally measurable functions $c_n$, $n \geq 0$, by
$$ c_0 = w, \qquad c_{n} = \lambda^{n} w + (1 + \lambda + \cdots + \lambda^{n-1}) \, b, \quad n \geq 1.$$
Since $\sup_{a \in A(x)} \int_\X w(y) \, q(dy \,|\, x, a) \leq \lambda w(x) + b$ by the definition of the (UC) model (cf.\ Def.~\ref{def-uc}), 
we have 
\begin{equation} \label{eq-prf-contraction-uc0}
  \sup_{a \in A(x)} \int_\X c_n(y) \, q(dy \mid x, a) \leq c_{n+1}(x), \qquad \forall \, x \in \X.
\end{equation}  
Now choose $\tilde \alpha \in (\alpha, 1)$ and define $\tilde w = \sum_{n=0}^\infty \tilde \alpha^n c_n$. Then $\tilde w \in \M(\X)$ and it is finite-valued because
$$   \tilde w(x) = \sum_{n=0}^\infty \tilde{\alpha}^n c_n(x) \leq \frac{w(x)}{1 - \tilde \alpha \lambda} + \frac{b}{(1 - \tilde \alpha) ( 1- \lambda)} < \infty, \qquad \forall \, x \in \X.$$
Furthermore, for any $x \in \X$ and $a \in A(x)$, by (\ref{eq-prf-contraction-uc0}),
$$    \int_\X \tilde w(y) \, q(dy \mid x, a) = \sum_{n=0}^\infty \tilde{\alpha}^n \int_\X c_n(y) \, q(dy \mid x, a) \leq \sum_{n=0}^\infty \tilde{\alpha}^n c_{n+1}(x) \leq  \tilde w(x) / \tilde{\alpha},$$
from which it follows that 
\begin{equation} \label{eq-prf-contraction-uc1}
   \alpha \cdot \sup_{a \in A(x)} \int_\X \tilde w(y) \, q(dy \mid x, a) \leq (\alpha/\tilde \alpha) \cdot \tilde w(x).
\end{equation}   
Recall also that $\sup_{a \in A(x)} | c(x, a) | \leq \hat c \, w(x) \leq \hat c \, \tilde w(x)$ for all $x \in \X$. This and the relation (\ref{eq-prf-contraction-uc1}) together imply that $T_\alpha$ maps $\An(\X) \cap \M_{\tilde w}(\X)$ into $\An(\X) \cap \M_{\tilde w}(\X)$ and
for $\beta = \alpha/\tilde \alpha < 1$, $T_\alpha$ has the desired contraction property:
$\left\| T_\alpha v - T_\alpha v' \right\|_{\tilde w} \leq \beta \left\| v - v' \right\|_{\tilde w}$ for all $v , v' \in \An(\X) \cap \M_{\tilde w}(\X)$.
\end{proof}

Finally, we prove Lemma~\ref{lem-optpol}. Recall that it is about the existence of optimal and nearly optimal, nonrandomized Markov or stationary policies in (PC) and (UC), as a consequence of the ACOI (\ref{eq-acoi-gen}):
$$  g^* + h(x) \geq \inf_{a \in A(x)} \left\{ c(x, a) + \int_\X h(y) \, q(dy \mid x, a) \right\}, \qquad x \in \X$$
where it is assumed that $g^*$ is constant and $h \in \An(\X)$, both being finite and with $h \geq 0$ for (PC), $\|h\|_w < \infty$ for (UC).
The proof arguments are mostly standard.

\begin{proof}[Proof of Lemma~\ref{lem-optpol}]
For all $x \in \X$, the r.h.s.\ of the ACOI (\ref{eq-acoi-gen}) is greater than $- \infty$ under the assumption on $h$ and the model conditions for (PC) and (UC).
Then, since $h$ is lower semi-analytic and $\Gamma$ is analytic, by a 
measurable selection theorem \cite[Prop.\ 7.50]{bs}, for each $\epsilon > 0$, there exists a universally measurable function $f: \X \to \A$ such that for all $x \in \X$, $f(x) \in A(x)$ and 
\begin{equation} \label{eq-prf-cons-acoi1}
     c(x, f(x)) + \int_\X h(y) \, q(dy \mid x, f(x)) \leq \inf_{a \in A(x)} \left\{ c(x, a) + \int_\X h(y) \, q(dy \mid x, a) \right\} + \epsilon.
\end{equation}     
For $k \geq 0$, let $f_k$ be the function satisfying the above for $\epsilon = \epsilon_k= 2^{-k}$. Then by the ACOI,
\begin{equation} \label{eq-prf-cons-acoi2}
  c(x, f_k(x)) + \int_\X h(y) \, q(dy \mid x, f_k(x)) \leq g^* + h(x) + \epsilon_k, \qquad x \in \X.
\end{equation}  

Consider the nonrandomized Markov policy $\pi = (f_0, f_1, \ldots)$, and let us show that it is average-cost optimal. First, note that for (PC), by iterating the inequality~(\ref{eq-prf-cons-acoi2}) and using the fact $c, h \geq 0$, we have
\begin{equation} \label{eq-prf-cons-acoi3}
     0 \leq \E^\pi_x  \big[ h(x_n) \big] < \infty, \qquad x \in \X, \ n \geq 0.
\end{equation} 
For (UC), the assumption $\| h \|_w < \infty$ and the model condition of (UC) ensure that
\begin{equation} \label{eq-prf-cons-acoi4}
  \E^\pi_x  \big[ | h(x_n) | \big] \leq \| h\|_w \cdot \E^\pi_x  \big[ w(x_n) \big] \leq \| h\|_w \cdot \big(\lambda^n w(x) + b/(1-\lambda)\big), \qquad x \in \X, \ n \geq 0.
\end{equation}
Now for both (PC) and (UC), for all $x \in \X$, we have that
\begin{align}
    J_{n+1}(\pi, x) & = \E^\pi_x \Big[ \sum_{k=0}^{n} c(x_k, f_k(x_k)) \Big] 
        =  \sum_{k=0}^{n} \E^\pi_x \Big[ c(x_k, f_k(x_k)) + h(x_{k+1}) - h(x_k) \Big] + h(x) - \E^\pi_x \big[ h(x_{n+1}) \big] \notag \\
       & \leq (n+1) \, g^* + h(x) - \E^\pi_x \big[ h(x_{n+1}) \big] + \sum_{k=0}^{n} \epsilon_k,  \label{eq-prf-cons-acoi5}
\end{align}    
where the second equality is valid in view of (\ref{eq-prf-cons-acoi3}) and (\ref{eq-prf-cons-acoi4}) (since they rule out the occurrence of $+\infty - \infty$ or $-\infty + \infty$ when we add or subtract the expectation of $h(x_k)$), and the last inequality follows from~(\ref{eq-prf-cons-acoi2}).   
Together with (\ref{eq-prf-cons-acoi3}) and (\ref{eq-prf-cons-acoi4}), (\ref{eq-prf-cons-acoi5})  
implies that in (PC) and (UC), 
$$\limsup_{n \to \infty} J_{n}(\pi, x)/n \leq g^*, \qquad \forall \, x \in \X.$$
Hence $\pi$ must be average-cost optimal. 

The same reasoning shows that for each $\epsilon > 0$, with $f$ as given in (\ref{eq-prf-cons-acoi1}), the nonrandomized stationary policy $\pi = (f, f, \ldots)$ is $\epsilon$-optimal.
When the infimum in the r.h.s.\ of the ACOI is attained for every $x \in \X$,  by the same selection theorem \cite[Prop.\ 7.50(b)]{bs}, the universally measurable function $f$ in (\ref{eq-prf-cons-acoi1}) can be chosen to satisfy the equality for $\epsilon = 0$. Then by the same reasoning as above, the nonrandomized stationary policy $\pi = (f, f, \ldots)$ is average-cost optimal.
\end{proof}

\end{appendices}

\addcontentsline{toc}{section}{References} 
\bibliographystyle{apa} 
\let\oldbibliography\thebibliography
\renewcommand{\thebibliography}[1]{%
  \oldbibliography{#1}%
  \setlength{\itemsep}{0pt}%
}
{\fontsize{9}{11} \selectfont
\bibliography{ac_BorelDP_bib}}

\begin{thebibliography}{}

\bibitem[\protect\astroncite{Arapostathis et~al.}{1993}]{ArB93}
Arapostathis, A., Borkar, V.~S., Fern{\'a}ndez-Gaucherand, E., Ghosh, M.~K.,
  and Marcus, S.~I. (1993).
\newblock Discrete-time controlled {Markov} processes with average cost
  criterion: A survey.
\newblock {\em SIAM J. Control Optim.}, 31(2):282--344.

\bibitem[\protect\astroncite{Bertsekas and Shreve}{1978}]{bs}
Bertsekas, D.~P. and Shreve, S.~E. (1978).
\newblock {\em Stochastic Optimal Control: The Discrete Time Case}.
\newblock Academic Press, New York.

\bibitem[\protect\astroncite{Billingsley}{1968}]{Bil68}
Billingsley, P. (1968).
\newblock {\em Convergence of Probability Measures}.
\newblock John Wiley \& Sons, New York.

\bibitem[\protect\astroncite{Blackwell}{1968}]{Blk-borel}
Blackwell, D. (1968).
\newblock A {Borel} set not containing a graph.
\newblock {\em Ann. Math. Statist.}, 39:1345--1347.

\bibitem[\protect\astroncite{Blackwell et~al.}{1974}]{BFO74}
Blackwell, D., Freedman, D., and Orkin, M. (1974).
\newblock The optimal reward operator in dynamic programming.
\newblock {\em Ann. Probability}, 2(5):926--941.

\bibitem[\protect\astroncite{Borkar}{1983}]{Bor83}
Borkar, V.~S. (1983).
\newblock Controlled {Markov} chains and stochastic networks.
\newblock {\em SIAM J. Control Optim.}, 21(4):652--666.

\bibitem[\protect\astroncite{Borkar}{1984}]{Bor84}
Borkar, V.~S. (1984).
\newblock On minimum cost per unit time control of {Markov} chains.
\newblock {\em SIAM J. Control Optim.}, 22(6):965--978.

\bibitem[\protect\astroncite{Cavazos-Cadena}{1991}]{CCa91}
Cavazos-Cadena, R. (1991).
\newblock A counterexample on the optimality equation in {Markov} decision
  chains with the average cost criterion.
\newblock {\em System and Control Lett.}, 16(5):387--392.

\bibitem[\protect\astroncite{Dudley}{2002}]{Dud02}
Dudley, R.~M. (2002).
\newblock {\em Real Analysis and Probability}.
\newblock Cambridge University Press, Cambridge.

\bibitem[\protect\astroncite{Dynkin and Yushkevich}{1979}]{DyY79}
Dynkin, E.~B. and Yushkevich, A.~A. (1979).
\newblock {\em Controlled {Markov} Processes}.
\newblock Springer, New York.

\bibitem[\protect\astroncite{Feinberg}{1980}]{Fei80}
Feinberg, E.~A. (1980).
\newblock An $\epsilon$-optimality control of a finite {Markov} chain with an
  average reward criterion.
\newblock {\em Theory Probab. Appl.}, 25(1):70--81.

\bibitem[\protect\astroncite{Feinberg}{1982}]{Fei82}
Feinberg, E.~A. (1982).
\newblock Controlled {Markov} processes with arbitrary numerical criteria.
\newblock {\em Theory Probab. Appl.}, 27(3):486--503.

\bibitem[\protect\astroncite{Feinberg et~al.}{2012}]{FKZ12}
Feinberg, E.~A., Kasyanov, P.~O., and Zadoianchuk, N.~V. (2012).
\newblock Average cost {Markov} decision processes with weakly continuous
  transition probabilities.
\newblock {\em Math. Oper. Res.}, 37(4):591--607.

\bibitem[\protect\astroncite{Feinberg and Lewis}{2007}]{FeL07}
Feinberg, E.~A. and Lewis, M.~E. (2007).
\newblock Optimality inequalities for average cost {Markov} decision processes
  and the stochastic cash balance problem.
\newblock {\em Math. Oper. Res.}, 32(4):769--783.

\bibitem[\protect\astroncite{Gubenko and Shtatland}{1975}]{GuS75}
Gubenko, L.~G. and Shtatland, E.~S. (1975).
\newblock On controlled, discrete-time {Markov} decision processes.
\newblock {\em Theory Probab. Math. Statist.}, 7:47--61.

\bibitem[\protect\astroncite{Hern\'{a}ndez-Lerma}{1993}]{HLe93}
Hern\'{a}ndez-Lerma, O. (1993).
\newblock Existence of average optimal policies in {MCPs} with strictly
  unbounded costs.
\newblock {\em Kybernetika}, 29(1):1--17.

\bibitem[\protect\astroncite{Hern\'{a}ndez-Lerma and Lasserre}{1996}]{HL96}
Hern\'{a}ndez-Lerma, O. and Lasserre, J.~B. (1996).
\newblock {\em Discrete-Time {Markov} Control Processes: Basic Optimality
  Criteria}.
\newblock Springer, New York.

\bibitem[\protect\astroncite{Hern\'{a}ndez-Lerma and Lasserre}{1999}]{HL99}
Hern\'{a}ndez-Lerma, O. and Lasserre, J.~B. (1999).
\newblock {\em Further Topics on Discrete-Time {Markov} Control Processes}.
\newblock Springer, New York.

\bibitem[\protect\astroncite{Ja\'{s}kiewicz and Nowak}{2006}]{JaN06}
Ja\'{s}kiewicz, A. and Nowak, A.~S. (2006).
\newblock On the optimality equation for average cost {Markov} control
  processes with {Feller} transition probabilities.
\newblock {\em J. Math. Anal. Appl.}, 316:495--509.

\bibitem[\protect\astroncite{Kurano}{1986}]{Kur86}
Kurano, M. (1986).
\newblock {Markov} decision processes with a {Borel} measurable cost
  function---the average case.
\newblock {\em Math. Oper. Res.}, 11(2):309--320.

\bibitem[\protect\astroncite{Kurano}{1989}]{Kur89}
Kurano, M. (1989).
\newblock The existence of a minimum pair of state and policy for {Markov}
  decision processes under the hypothesis of {Doeblin}.
\newblock {\em SIAM J. Control Optim.}, 27(2):296--307.

\bibitem[\protect\astroncite{Maitra and Sudderth}{1992}]{MS92}
Maitra, A. and Sudderth, W. (1992).
\newblock The optimal reward operator in negative dynamic programming.
\newblock {\em Math. Oper. Res.}, 17(4):921--931.

\bibitem[\protect\astroncite{Meyn and Tweedie}{2009}]{MeT09}
Meyn, S. and Tweedie, R.~L. (2009).
\newblock {\em Markov Chains and Stochastic Stability}.
\newblock Cambridge University Press, Cambridge, 2nd edition.

\bibitem[\protect\astroncite{Meyn}{1997}]{Mey97}
Meyn, S.~P. (1997).
\newblock The policy iteration algorithm for average reward {Markov} decision
  processes with general state space.
\newblock {\em IEEE Trans. Automat. Contr.}, 42(12):1663--1680.

\bibitem[\protect\astroncite{Parthasarathy}{1967}]{Par67}
Parthasarathy, K.~R. (1967).
\newblock {\em Probability Measures on Metric Spaces}.
\newblock Academic Press, New York.

\bibitem[\protect\astroncite{Piunovski}{1989}]{Piu89}
Piunovski, A.~B. (1989).
\newblock General {Markov} models with the infinite horizon.
\newblock {\em Problems of Control and Information Theory}, 18(3):169--182.

\bibitem[\protect\astroncite{Puterman}{1994}]{Puterman94}
Puterman, M.~L. (1994).
\newblock {\em Markov Decision Processes: Discrete Stochastic Dynamic
  Programming}.
\newblock John Wiley \& Sons, New York.

\bibitem[\protect\astroncite{Ritt and Sennott}{1992}]{RiS92}
Ritt, R.~K. and Sennott, L.~I. (1992).
\newblock Optimal stationary policies in general state space {Markov} decision
  chains with finite action sets.
\newblock {\em Math. Oper. Res.}, 17(4):901--909.

\bibitem[\protect\astroncite{Rockafellar and Wets}{1998}]{RoW98}
Rockafellar, R.~T. and Wets, R. J.-B. (1998).
\newblock {\em Variational Analysis}.
\newblock Springer, Berlin, 1st edition.

\bibitem[\protect\astroncite{Sch{\"a}l}{1975}]{Sch75}
Sch{\"a}l, M. (1975).
\newblock Conditions for optimality in dynamic programming and for the limit of
  $n$-stage optimal policies to be optimal.
\newblock {\em Z. Wahrscheinlichkeitstheorie verw. Gebiete}, 32:179--196.

\bibitem[\protect\astroncite{Sch{\"a}l}{1992}]{Sch92}
Sch{\"a}l, M. (1992).
\newblock On the second optimality equation for semi-{Markov} decision models.
\newblock {\em Math. Oper. Res.}, 17(2):470--486.

\bibitem[\protect\astroncite{Sch{\"a}l}{1993}]{Sch93}
Sch{\"a}l, M. (1993).
\newblock Average optimality in dynamic programming with general state space.
\newblock {\em Math. Oper. Res.}, 18(1):163--172.

\bibitem[\protect\astroncite{Sennott}{1989}]{Sen89}
Sennott, L.~I. (1989).
\newblock Average cost optimal stationary policies in infinite state {Markov}
  decision processes with unbounded costs.
\newblock {\em Oper. Res.}, 37(4):626--633.

\bibitem[\protect\astroncite{Sennott}{1999}]{Sen99}
Sennott, L.~I. (1999).
\newblock {\em Stochastic Dynamic Programming and the Control of Queueing
  Systems}.
\newblock John Wiley \& Sons, New York.

\bibitem[\protect\astroncite{Shreve}{1979}]{Shr79}
Shreve, S.~E. (1979).
\newblock Resolution of measurability problems in discrete-time stochastic
  control.
\newblock In {\em Stochastic Control Theory and Stochastic Differential
  Systems}, pages 580--587. Springer, Berlin.

\bibitem[\protect\astroncite{Shreve and Bertsekas}{1978}]{ShrB78}
Shreve, S.~E. and Bertsekas, D.~P. (1978).
\newblock Alternative theoretical frameworks for finite horizon discrete-time
  stochastic optimal control.
\newblock {\em SIAM J. Control Optim.}, 16(6):953--978.

\bibitem[\protect\astroncite{Shreve and Bertsekas}{1979}]{ShrB79}
Shreve, S.~E. and Bertsekas, D.~P. (1979).
\newblock Universally measurable policies in dynamic programming.
\newblock {\em Math. Oper. Res.}, 4(1):15--30.

\bibitem[\protect\astroncite{Srivastava}{1998}]{Sriv-borel}
Srivastava, S.~M. (1998).
\newblock {\em A Course on Borel Sets}.
\newblock Springer, New York.

\bibitem[\protect\astroncite{Strauch}{1966}]{Str-negative}
Strauch, R.~E. (1966).
\newblock Negative dynamic programming.
\newblock {\em Ann. Math. Statist.}, 37:871--890.

\bibitem[\protect\astroncite{van~der Wal}{1984}]{vdW84}
van~der Wal, J. (1984).
\newblock {\em Stochastic Dynamic Programming}.
\newblock The Mathematical Centre, Amsterdam, 2nd edition.

\bibitem[\protect\astroncite{Vega-Amaya}{2003}]{VAm03}
Vega-Amaya, O. (2003).
\newblock The average cost optimality equation: A fixed point approach.
\newblock {\em Bol. Soc. Mat. Mexicana}, 9(1):185--195.

\bibitem[\protect\astroncite{Vega-Amaya}{2015}]{VAm15}
Vega-Amaya, O. (2015).
\newblock On the vanishing discount factor approach for {Markov} decision
  processes with weakly continuous transition probabilities.
\newblock {\em J. Math. Anal. Appl.}, 426:978--985.

\bibitem[\protect\astroncite{Veinott}{1969}]{Vei69}
Veinott, A.~F. (1969).
\newblock On discrete dynamic programming with sensitive discount optimality
  criteria.
\newblock {\em Ann. Math. Statist.}, 40:1635--1660.

\bibitem[\protect\astroncite{Yu}{2015}]{yu-tc15}
Yu, H. (2015).
\newblock On convergence of value iteration for a class of total cost {Markov}
  decision processes.
\newblock {\em SIAM J. Control Optim.}, 53(4):1982--2016.

\bibitem[\protect\astroncite{Yu and Bertsekas}{2015}]{YuB-mvipi}
Yu, H. and Bertsekas, D.~P. (2015).
\newblock A mixed value and policy iteration method for stochastic control with
  universally measurable policies.
\newblock {\em Math. Oper. Res.}, 40(4):926--968.

\end{thebibliography}

\end{document}